\newtheorem{thmA}{Theorem}
\newtheorem{corA}[thmA]{Corollary}
\newtheorem{theorem}{Theorem}[section]
\newtheorem{proposition}[theorem]{Proposition}
\newtheorem{lemma}[theorem]{Lemma}
\newtheorem{corollary}[theorem] {Corollary}
\newtheorem{addendum}[theorem] {Addendum}
\theoremstyle{remark}
\newtheorem{remark}[theorem]{Remark}
\newtheorem{remarks}[theorem]{Remarks}
\theoremstyle{definition}
\newcommand{\und}[1]{{\underline{#1}}}
\newcommand{\ov}[1]{{\overline{#1}}}
\def\L{\Lambda}
\def\co{\colon}
\def\aut{{\rm{Aut}}}
\def\Aut{{\rm{Aut}}}
\def\out{{\rm{Out}}}
\def\Out{{\rm{Out}}}
\def\G{\Gamma}
\def\stab{{\rm{Stab}}}
\def\Stab{{\rm Stab}}
\def\fix{{\rm{Fix}}}
\def\<{\langle}
\def\>{\rangle}
\def\FS{{\mathcal{FS}}}
\def\Z{\mathbb{Z}}
\def\2rose{{ {$2$--rose}}}
\def\sym{{\rm{Sym}}}  
\newcommand{\LL}{A}
\newcommand{\rk}{{\rm{rk}}}
\newcommand{\Inn}{\rm{Inn}}
\newcommand{\ad}{{\rm{ad}}}
\newcommand{\Isom}{\rm{Isom}}
\newcommand{\Fix}{{\rm{Fix}}}
\newcommand{\ia}{\rm{\rm IA}_N(\mathbb{Z}/3\mathbb{Z})}
\newcommand{\iat}{\rm{\rm IA}_3(\mathbb{Z}/3\mathbb{Z})}
\newcommand{\pr}{{{\mathrm{rk}_F}}}
\newcommand{\gl}{\mathrm{GL}}
\newcommand{\image}{{\rm{Im}}}
\newcommand{\st}{\widetilde{\rm{Stab}}(T)}
\newcommand{\sto}{\widetilde{\rm{Stab}}^0(T)}
\title{Direct products of free groups in $\Aut(F_N)$}
\author{Martin R. Bridson and Richard D. Wade}
\begin{document}

\maketitle

\begin{abstract} We give a complete description of the embeddings of direct products of nonabelian free groups into $\aut(F_N)$ and $\out(F_N)$ when the number of direct factors is maximal. To achieve this, we prove that the image of
each such embedding has a canonical fixed point of a particular type in the boundary of Outer space.
\end{abstract}

\section{Introduction}

Mapping class groups of surfaces of finite type
and automorphism groups of free groups are central objects in geometric topology and group theory, and it is natural to study them in parallel.
Nielsen-Thurston theory \cite{wpt} provides a potent geometric description of the individual elements of mapping class groups,
and the train-track technology initiated by Bestvina and Handel \cite{BH} provides an equally potent description in the wilder
setting of free group automorphisms. In pursuit of a more global understanding of these groups, one seeks insight from their
actions on Teichm\"uller space and Outer space, as well as associated spaces such as the curve complex, in the case of
mapping class groups, and free factor and splitting complexes in the case of $\aut(F_N)$ and $\out(F_N)$. 

The need for geometric insights and invariants comes into particularly sharp focus when 
one is trying to elucidate the intricate subgroup structure of these groups, as we are in this article. One sees
this clearly in the classification of abelian subgroups, which has  many ramifications. From the Nielsen-Thurston 
theory, one knows that, up to finite-index, every abelian subgroup of the mapping class group 
is generated by combinations of Dehn twists in a collection of disjoint curves and, optionally,
a pseudo-Anosov automorphism on each connected component of the complement of these curves \cite{MR726319, MR1195787}.
 This description provides a 
host of geometric invariants for studying the totality of abelian subgroups, starting with the stable laminations of the pseudo-Anosov pieces and compatibility conditions for the curve systems. With these in hand, one can organise the commensurability
classes of maximal-rank abelian subgroups into a space that is closely related to the curve complex.  This idea
is central to  Ivanov's proof \cite{Iva} of commensurator rigidity for mapping class groups: he proved that,
with some low-genus exceptions, every isomorphism between finite-index subgroups of a mapping class group
is the restriction of a conjugation in the ambient group. 
%In free groups, the geometric structures often take the form of either \emph{train-tracks} or \emph{actions of $F_N$ on trees}.  
Although the situation in free groups is much more complicated, Feighn and Handel \cite{FeH} succeeded in
describing all abelian subgroups of $\Out(F_N)$, and this description was used
by Farb and Handel \cite{FH} to establish commensurator rigidity for $\out(F_N)$ in the case $N \geq 4$. 

Ivanov's commensurator rigidity theorem was later extended by
Bridson, Pettet and Souto \cite{BPS} to various subgroups
of the mapping class group; they followed a similar template of proof  but used
 {\em direct products of nonabelian free groups} in place of abelian subgroups, replacing the curve complex with a complex built from decompositions of the surface into subsurfaces of euler characteristic $-2$ (cf. \cite{BM}).
In the same spirit, by focussing on direct products of nonabelian free groups rather than
abelian subgroups, Horbez and Wade \cite{HW2} proved that $\out(F_N)$ has commensurator rigidity for $N\ge 3$,
as do many of its natural subgroups.
%they proved that various natural subgroups enjoy similar rigidity.   

Our main purpose in this article is to give a complete classification of the maximal-rank direct products of free groups 
in $\aut(F_N)$ and $\out(F_N)$; we shall see that they are remarkably rigid. In a companion to this paper \cite{BW2}, we shall
use this classification, in harness with \cite{BB},
to prove that $\aut(F_N)$ and its Torelli subgroup are commensurator rigid if $N\ge 3$.

The most important step in our proof of the
classification is a fixed-point theorem that we establish for the action of $\out(F_N)$ on the space
of free splittings of $F_N$ (Theorem \ref{t:thmA}). To motivate this theorem, we begin by describing an example of 
a subgroup of $\out(F_N)$ that is a direct product of the maximal number of copies of $F_2$; 
Horbez and Wade  \cite{HW2} proved that this number is $2N-4$ (one less than the cohomological dimension). 

We fix a basis $\{a_1, a_2, x_1, \ldots, x_{N-2} \}$ of $F_N$ and consider  the direct product $D$ of 
the $2N-4$ copies of $F_2$ in $\out(F_N)$ obtained by multiplying the elements $x_1, \ldots, x_{N-2}$ on the left and right by elements of $\langle a_1, a_2 \rangle$. This group $D$ fixes a graph-of-groups decomposition of $F_N$ with a single vertex group given by $A=\langle a_1, a_2\rangle$ and $N-2$ loops with trivial edge stabilizers (with $x_1, \ldots, x_{N-2}$ as the stable letters). The Bass--Serre tree associated to any such decomposition lies in boundary of Culler and Vogtmann's Outer space \cite{CV}; we call such a point a \emph{collapsed rose with $N-2$ petals}, as it is obtained from a rose in the interior of Outer space by collapsing two petals.

\begin{figure}[ht]  \centering \def\svgwidth{170pt} 
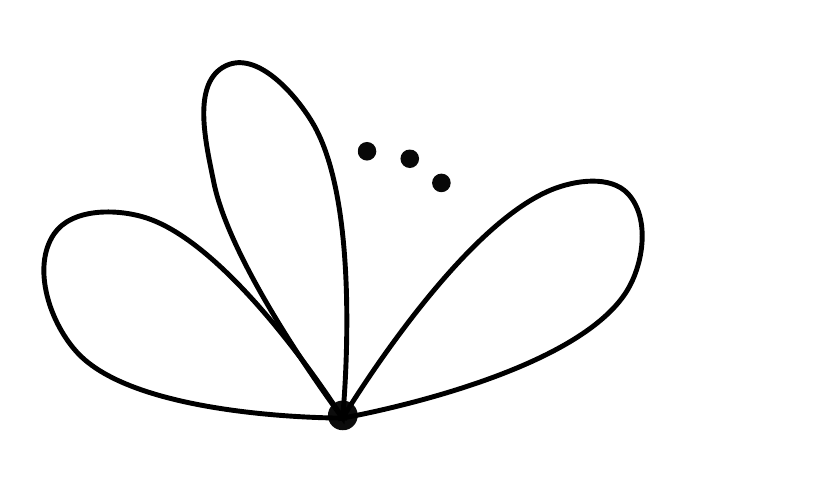 \caption{A collapsed rose with $N-2$ petals.} \label{f:rose} \end{figure}

\noindent Our first theorem shows that if a direct product of  nonabelian free groups in $\out(F_N)$ has the
maximal number of factors, then it has a canonical fixed point of this type. Note that in the following theorem, and throughout, we do not assume that $D$ is finitely generated. 

\begin{thmA}\label{t:thmA}
Let $N \geq 3$ and suppose $D<\Out(F_N)$ is a direct product of $2N-4$ nonabelian free groups. Then, in the boundary of Outer space, there is a unique collapsed rose with $N-2$ petals that is fixed by $D$.
\end{thmA}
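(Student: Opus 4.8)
\emph{Strategy.} The plan is to recast the problem in terms of free factors, eliminate fully irreducible elements, produce an invariant free factor system, and then use the maximality of the rank to pin that system down to a single rank-$2$ factor; uniqueness will come from the same maximality bound.

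The first move is to set up a dictionary. A collapsed rose with $N-2$ petals is the Bass--Serre tree of a splitting $F_N = A \ast \langle x_1\rangle \ast \cdots \ast \langle x_{N-2}\rangle$ in which $\rk(A)=2$ and all edge groups are trivial. Twisting a stable letter $x_j$ by an element of $A$ preserves this tree, so the tree $T_{[A]}$ depends only on the conjugacy class $[A]$ of the rank-$2$ free factor, and I would check that $[A]\mapsto T_{[A]}$ is an $\Out(F_N)$-equivariant bijection from conjugacy classes of rank-$2$ free factors onto collapsed roses with $N-2$ petals. Under this dictionary $D$ fixes a collapsed rose if and only if $D$ preserves a conjugacy class of rank-$2$ free factors, so the theorem becomes the assertion that $D$ preserves \emph{exactly one} such class.

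Next I would rule out fully irreducible elements. Writing $D=\prod_i G_i$ and $\phi=(\phi_i)\in D$, the centraliser of $\phi$ in $D$ contains $\prod_{i:\phi_i=1}G_i \times \prod_{i:\phi_i\neq 1}C_{G_i}(\phi_i)$; since centralisers in free groups are cyclic and there are $2N-4\ge 2$ factors, this group is never virtually cyclic once $\phi\neq 1$. As the centraliser in $\Out(F_N)$ of a fully irreducible automorphism is virtually cyclic, no nontrivial element of $D$ is fully irreducible. Hence $D$ acts on the (Gromov hyperbolic, by Bestvina--Feighn) free factor complex with no loxodromic element. Each $G_i$ is nonabelian free, so $D$ contains $F_2\times F_2$, which cannot fix a point of the boundary of the free factor complex because the stabilisers of such points are virtually cyclic (Bestvina--Reynolds, Hamenstädt). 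A group acting on a hyperbolic space with no loxodromic and no fixed boundary point has bounded orbits, so $D$ has bounded orbits and therefore preserves a proper free factor system $\mathcal F$.

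The crux, and what I expect to be the main obstacle, is to promote $\mathcal F$ to the collapsed-rose configuration using maximality. Let $\mu(\Gamma)$ denote the largest number of factors in a direct product of nonabelian free subgroups of $\Gamma$; Horbez--Wade \cite{HW2} give $\mu(\Out(F_N))=2N-4$, realised by $D$. Restricting $D$ to the vertex groups and to the complementary quotient of an invariant splitting realising $\mathcal F$ produces maximal direct products inside strictly smaller relative automorphism groups, and I would run an induction on $N$ to compute $\mu$ for these relative groups. The hard part is the sharp bound $\mu(\Out(F_N)_{\mathcal F})\le 2N-4$, with equality forcing $\mathcal F=\{[A]\}$ and $\rk(A)=2$, the left and right twist subgroups of each of the $N-2$ petals accounting for exactly the maximal number of commuting factors. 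The same bound yields uniqueness: if $[A]\neq[A']$ were two invariant rank-$2$ factors, then $D$ would lie in $\Out(F_N)_{[A]}\cap\Out(F_N)_{[A']}$, which stabilises a strictly finer system and hence has $\mu<2N-4$, contradicting $\mu(D)=2N-4$. Since the invariant $\mathcal F$, the inductive $\mu$-computation, and this uniqueness argument never use finite generation, the final point to address is merely the production of a $D$-invariant free factor system when $D$ is infinitely generated: here I would take the canonical invariant systems of the finitely generated subgroups (Handel--Mosher), which have bounded complexity, and pass to the stable one preserved by all of $D$. Assembling these steps gives the unique $D$-fixed collapsed rose with $N-2$ petals.
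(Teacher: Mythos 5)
Your proposal breaks down at its very first step, the ``dictionary'': it is not true that a collapsed rose with $N-2$ petals is determined by the conjugacy class of its rank-$2$ vertex group. Twisting stable letters by elements of $A$ (and permuting or inverting them) does preserve the tree, but these are not the only ways to choose a complement of $A$. For example, in $F_4$ with basis $\{a_1,a_2,x_1,x_2\}$ and $A=\langle a_1,a_2\rangle$, the tuples of stable letters $(x_1,x_2)$ and $(x_1,x_2x_1)$ both give free decompositions of $F_4$ over the same vertex group $A$, hence two collapsed roses with two petals whose vertex groups coincide; they are inequivalent, since in the first tree the vertices $A$ and $x_2x_1A$ are at distance $2$ while in the second they are adjacent (an $F_4$-equivariant isomorphism must send the unique vertex with stabilizer exactly $A$ to itself, and equivariance then forces every vertex $gA$ to map to $gA$, so it would preserve adjacency). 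Thus the map from collapsed roses to conjugacy classes of rank-$2$ free factors is far from injective, and your reduction of Theorem~\ref{t:thmA} to ``$D$ preserves exactly one conjugacy class of rank-$2$ free factors'' loses the actual content of the theorem: fixing a rose is strictly stronger than preserving its vertex group class, and uniqueness of an invariant factor class would not give uniqueness of the fixed rose. This is precisely why the paper must construct the fixed tree by hand (blowing up a one-edge splitting via a graph of actions) and must prove uniqueness through compatibility and folding arguments in a deformation space (Section~\ref{s:fix}) rather than through free factors alone.

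Beyond this, the step you yourself flag as the crux is both unproved and incorrectly stated. The claim that maximal product rank of the relative stabilizer forces $\mathcal F=\{[A]\}$ with $\rk(A)=2$ is false: the standard maximal example $D_\B=L_1\times R_1\times\cdots\times L_{N-2}\times R_{N-2}$ also preserves the classes of corank-one free factors such as $\langle a_1,a_2,x_1,\dots,x_{N-3}\rangle$ (among many others), and the stabilizer in $\Out(F_N)$ of a one-edge nonseparating splitting --- equivalently, a collapsed rose with one petal and vertex group of rank $N-1$ --- itself has product rank $2N-4$, as follows from Proposition~\ref{p:stab-description-rose}. Indeed the paper's induction exploits exactly this fact, descending through such a splitting; so equality of product rank cannot by itself isolate a rank-$2$ factor, let alone a rose. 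Your preliminary reductions (no nontrivial element of $D$ is fully irreducible because its centralizer in $D$ is too large, hence $D$ preserves a proper free factor system, with a limiting argument for infinitely generated $D$) are sound and parallel the role of \cite{HW2} in the paper, but they only reach the starting point of the real argument: the paper then needs the algebraic analysis of $M_k(F_2)$ (Theorem~\ref{t:all-the-D}), the arc-stabilizer estimate (Lemma~\ref{l:consecutive_edges}) and the fixed-point result (Proposition~\ref{p:fixed_point}) to carry out the blow-up, none of which your sketch replaces.
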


When a subgroup of $\Out(F_N)$ fixes a tree $T$ in Outer space or its boundary, the preimage of this group in $\aut(F_N)$ admits an action on $T$. With a small amount of extra work, the following result can be deduced from Theorem~\ref{t:thmA}.

\begin{thmA}\label{t:B}
Let $N \geq 3$ and suppose $D<\aut(F_N)$ is a direct product of $2N-3$ nonabelian free groups. Then,
the image of $D$ in $\out(F_N)$ fixes a unique collapsed rose with $N-2$ petals, and $D$ acts on the Bass--Serre tree of this collapsed rose with a unique global fixed point. 
\end{thmA}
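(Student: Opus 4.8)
The plan is to reduce the first assertion to Theorem~\ref{t:thmA} and then bootstrap a tree action. Write $p\co\aut(F_N)\to\out(F_N)$ and $K=D\cap\Inn(F_N)$, a normal subgroup of $D$ that is free because it lies in $\Inn(F_N)\cong F_N$. First I would show that $K$ is contained in a single direct factor. If $K$ projected nontrivially onto two factors $G_i$ and $G_j$, then choosing $k\in K$ with nontrivial $i$-coordinate $w$ and an element $g$ of the $i$-th factor with $[g,w]\neq1$, normality gives $1\neq[g,k]\in K\cap G_i$, and symmetrically $K\cap G_j\neq1$; since these commute, $K$ would contain $\Z^2$, contradicting freeness. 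Thus, after relabelling, $K\le G_i$. Moreover $K\neq1$: otherwise $p|_D$ is injective and $\ov D:=p(D)$ is a direct product of $2N-3$ nonabelian free groups in $\out(F_N)$, exceeding the maximal number $2N-4$ of factors established by Horbez and Wade. Hence $\ov D=\big(\prod_{j\neq i}G_j\big)\times(G_i/K)$, and $H:=\prod_{j\neq i}G_j$ injects into $\ov D$ (its intersection with $K\le G_i$ is trivial), so $H$ is a direct product of exactly $2N-4$ nonabelian free groups.

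Applying Theorem~\ref{t:thmA} to $H$ yields a unique $H$-fixed collapsed rose $[T]$ with $N-2$ petals. Since $H$ is a direct factor of $\ov D$, it is normal, so for each $\ov d\in\ov D$ the point $\ov d\cdot[T]$ is a collapsed rose with $N-2$ petals fixed by $\ov dH\ov d^{-1}=H$; uniqueness forces $\ov d\cdot[T]=[T]$. Therefore $\ov D$ fixes $[T]$, and it is the only collapsed rose with $N-2$ petals fixed by $\ov D$ (any such is a fortiori $H$-fixed). This proves the first assertion.

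For the second assertion I first promote the fixed point to an action. Because $T$ has trivial edge stabilizers and $A$ is self-normalising, $T$ admits no nontrivial $F_N$-equivariant isometry; hence each $\Phi\in p^{-1}(\ov D)$ determines a unique $\Phi$-twisted isometry $H_\Phi$ of $T$, and $\Phi\mapsto H_\Phi$ is an injective homomorphism $\rho\co p^{-1}(\ov D)\to\Isom(T)$ (this is the action of $\sto$ on $T$) restricting to the standard $F_N$-action on $\Inn(F_N)$. Writing $\hat K\le F_N$ for the subgroup matching $K$, two facts are immediate: $\hat K$ is nonabelian (a nontrivial normal subgroup of the nonabelian free group $G_i$) and $\Phi$-invariant for all $\Phi\in D$ (from $K\trianglelefteq D$ and $\Phi\,\ad_w\,\Phi^{-1}=\ad_{\Phi(w)}$); and every $\Phi\in H$ fixes $\hat K$ elementwise, since $H$ commutes with $K$ and $\Phi\,\ad_w\,\Phi^{-1}=\ad_{\Phi(w)}$ forces $\Phi(w)=w$ for $w\in\hat K$. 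In particular, for any $1\neq\Phi\in H$ we have $\hat K\le\Fix(\Phi)$, so the Bestvina--Handel bound $\rk\,\Fix(\Phi)\le N$ shows $\hat K$ is finitely generated; as $K\trianglelefteq G_i$ with $G_i$ nonabelian free, $K$ then has finite index, so $G_i/K$ is finite.

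It then remains to locate the fixed point, and here the key point is that $\hat K$ is elliptic in $T$. Granting this, $\Fix(\hat K)$ is a single vertex $v_0$ --- a nonabelian group cannot fix an edge, as edge stabilizers are trivial --- and since $\rho(D)$ normalises $\hat K$ it preserves $\Fix(\hat K)=\{v_0\}$ and therefore fixes $v_0$; the inclusion $\Fix(\rho(D))\subseteq\Fix(\hat K)=\{v_0\}$ gives uniqueness. Proving that $\hat K$ is elliptic is, I expect, the main obstacle. The mechanism I would use is that $H$ centralises $\hat K$: if $\hat K$ were non-elliptic, with minimal invariant subtree $T_{\hat K}$, then --- the centraliser of a minimal nonelementary action of a nonabelian free group being trivial --- $\rho(H)$ would fix $T_{\hat K}$ pointwise, forcing every element of $H$ to preserve the whole chain of vertex groups met by $T_{\hat K}$. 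Turning this into a contradiction with $H$ being a direct product of $2N-4$ nonabelian free groups is the delicate step, and it is where the structure theory of $\sto$ and the rigidity of collapsed roses supplied by Theorem~\ref{t:thmA} must be brought to bear.
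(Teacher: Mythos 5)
Your first paragraph is correct and is essentially the paper's own argument for the first assertion: the kernel $K=D\cap\Inn(F_N)$ is free and normal, hence lies in a single factor (this is exactly Lemma~\ref{l:free_subgroup}), so $\ov D$ normalises the image $H$ of the remaining $2N-4$ factors, and the uniqueness in Theorem~\ref{t:thmA} forces $\ov D$ to fix the rose fixed by $H$ (cf.\ Remark~\ref{r:normalizer}). For the second assertion, however, your argument contains a false side claim and, more importantly, a genuine gap at its crux. The false claim: from $\hat K\le\Fix(\Phi)$ and the bound $\rk\,\Fix(\Phi)\le N$ you deduce that $\hat K$ is finitely generated. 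This is a non sequitur, since subgroups of finite-rank free groups need not be finitely generated, and the conclusion that $G_i/K$ is finite is in fact false in general: by Theorem~\ref{t:direct-products-aut}, a factor $G_i$ can be a nonabelian free subgroup of $\langle I(A),\tau\rangle\cong F_2\rtimes_\tau\Z$ surjecting onto $\langle\tau\rangle$, in which case $K=G_i\cap\Inn(F_N)$ is an infinitely generated normal subgroup with $G_i/K\cong\Z$. Fortunately you never use this claim afterwards.

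The genuine gap is the one you flag yourself: the ellipticity of $\hat K$, which carries the whole of the second assertion, is left unproven. Your reduction is sound --- if $\hat K$ were nonelliptic, its minimal subtree $T_{\hat K}$ would be fixed pointwise by $\rho(H)$, since with trivial arc stabilizers a nonabelian free group can neither preserve a line nor fix an end, so its action on $T_{\hat K}$ is irreducible and its centraliser in $\Isom(T)$ fixes $T_{\hat K}$ pointwise --- but the contradiction you say you cannot produce is supplied precisely by Lemma~\ref{l:consecutive_edges}: $T_{\hat K}$ contains an edge path of length at least $2$, whose pointwise stabilizer in $\st$ has product rank at most $2N-5$, whereas $H$ is a direct product of $2N-4$ nonabelian free groups. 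Without that arc-stabilizer bound (or an equivalent), your argument does not close; it is not a matter of routine bookkeeping, since this bound is the technical heart of the paper's treatment. The paper itself avoids your kernel analysis entirely in this step: it applies Proposition~\ref{p:fixed_point} to $D$ acting on $T$, whose proof runs the same product-rank-versus-arc-stabilizer argument directly (if some factor has a hyperbolic element, the commutator subgroups of the other factors fix its axis; if every element is elliptic but there is no global fixed point, a suitable finitely generated subproduct fixes a half-line), in both cases contradicting Lemma~\ref{l:consecutive_edges}. So your strategy is viable and genuinely different in its bookkeeping, but as written it reconstructs the skeleton of the proof while omitting the load-bearing lemma.
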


In order to move from
Theorems~A and B to the precise algebraic description of the direct products of free groups that we seek, some more
notation is required. We continue to work with a fixed basis $\{a_1, a_2, x_1, \ldots, x_{N-2}\}$ of $F_N$ and let $A$ be the free factor generated by $a_1$ and $a_2$. Let $L_i$ be the free group of rank 2 in $\aut(F_N)$ consisting of elements that send $x_i \mapsto ax_i$ for some $a \in A$ and fix all other basis elements. Similarly, we use $R_i$ to denote the free group of right transvections of $x_i$ by an element of $A$. Furthermore, we let $I(A)$ be the group of inner automorphisms generated by elements of $A$ and let $\tau$ be the Nielsen automorphism mapping $a_1 \mapsto a_1a_2$ and fixing all other basis elements. We let $L_i^\tau$, $R_i^\tau$, and $I(A)^\tau$ be the respective subgroups of these groups that commute with $\tau$ (equivalently, the elements from $A$ used in their associated transvections or inner automorphisms belong to $\Fix(\tau) \cap A = \langle a_1a_2a_1^{-1}, a_2 \rangle$).

\begin{thmA}\label{t:direct-products-aut}
Let $N \geq 3$ and suppose $D < \aut(F_N) $ is a direct product of $2N-3$ nonabelian free groups. Then a conjugate of $D$ is contained in one of the following groups.
\begin{itemize}
\item $L_1 \times \cdots \times L_{N-2} \times R_1 \times \cdots \times R_{N-2} \times I(A)$
\item $L_1^\tau \times \cdots \times L_{N-2}^\tau \times R_1^\tau \times \cdots \times R_{N-2}^\tau \times I(A)^\tau \times \langle \tau \rangle $
\item $\langle \tau, L_1 \rangle \times L_2^\tau \times \cdots L_{N-2}^\tau \times R_1^\tau \times \cdots R_{N-2}^\tau \times I(A)^\tau$
\item $L_1^\tau \times \cdots \times L_{N-2}^\tau \times R_1^\tau \times \cdots \times R_{N-2}^\tau \times \langle I(A), \tau \rangle $
\end{itemize}
\end{thmA}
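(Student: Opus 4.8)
The plan is to feed Theorem~\ref{t:B} into a structural analysis of the stabiliser in $\aut(F_N)$ of the collapsed rose. By Theorem~\ref{t:B}, after conjugating we may assume that $D$ preserves the standard collapsed rose of the introduction and fixes the vertex whose stabiliser is $A=\langle a_1,a_2\rangle$, so that $\phi(A)=A$ for every $\phi\in D$; write $D=\prod_{j=1}^{2N-3}F^{(j)}$ for its free factors. The stabiliser is
\[
G=\{\phi\in\aut(F_N): \phi(A)=A,\ \phi(x_i)=u_i\,x_{\sigma(i)}^{\epsilon_i}\,v_i,\ u_i,v_i\in A\},
\]
and restriction to $A$ and the combinatorial action on the loops give homomorphisms $\rho\colon G\to\aut(A)$ and $\pi\colon G\to W$, where $W=(\Z/2)^{N-2}\rtimes S_{N-2}$; their common kernel is $K=\langle L_1,\dots,L_{N-2},R_1,\dots,R_{N-2}\rangle\cong F_2^{2N-4}$ and $G\cong K\rtimes(\aut(A)\times W)$. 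A direct computation shows $I(A)$ centralises $K$, so $\langle K,I(A)\rangle=L_1\times\cdots\times R_{N-2}\times I(A)\cong F_2^{2N-3}$ is the first group in the statement. I would first show $\pi(D)=1$: a factor acting through $\pi$ (permuting or inverting loops) confines the twist subgroups that centralise it to proper diagonals, and a rank count then drops the number of commuting free factors below $2N-3$. Hence $D<K\rtimes\aut(A)$.

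The conceptual heart is to bound $\bar D_A$, the image of $D$ in $\out(A)=\gl_2(\Z)$, and I claim it is virtually cyclic. Since $\gl_2(\Z)$ is virtually free, in the commuting product $\bar D_A=\prod_j\overline{\rho(F^{(j)})}$ at most one factor is nonabelian; if one were, it would contain a hyperbolic class $\bar\alpha$, represented by some $\phi$ in a single factor. Conjugation by $\phi$ acts on the twist parameters of $K$ through $\alpha=\rho(\phi)$, so a twist centralising $\phi$ must have parameter in an $\alpha$--invariant conjugacy class; as a hyperbolic $\alpha$ has $\Fix(\alpha)=1$ and no nontrivial periodic conjugacy classes, no nontrivial element of $K$ centralises $\phi$. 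Combined with the fact that the centraliser of $\bar\alpha$ in $\gl_2(\Z)$ is virtually cyclic, this forces the centraliser of $\phi$ in $G$ to be virtually cyclic; but that centraliser contains the remaining $2N-4$ factors, a copy of $F_2^{2N-4}$ --- a contradiction for $N\ge3$. The same estimate shows that an infinite--order class in $\bar D_A$ fixing less than a rank--two subgroup again leaves too few commuting twists; since the only classes in $\gl_2(\Z)$ fixing a rank--two subgroup of $A$ are the parabolics (conjugate to powers of $\bar\tau$) and nontrivial finite--order classes fix at most a cyclic subgroup, we conclude that, after a further conjugation realised inside $G$, $\bar D_A\le\langle\bar\tau\rangle$, with $\Fix(\tau)\cap A=\langle a_1a_2a_1^{-1},a_2\rangle$.

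It remains to promote this to a statement about $D$. If $\bar D_A=1$, each $\phi\in D$ acts on $A$ as some $\ad_a$ with $a\in A$, and since it also acts trivially on the loops, $\phi\,\ad_a^{-1}\in K$; thus $D<K\times I(A)$, the first option. If $\bar D_A$ is infinite then each $\phi\in D$ differs from an element of $\langle I(A),\tau\rangle$ by a member of $K$, so
\[
D<\widehat G=\langle K,I(A),\tau\rangle\cong\bigl(K\times I(A)\bigr)\rtimes_\tau\langle\tau\rangle\cong F_2^{2N-3}\rtimes_\tau\Z,
\]
where $\tau$ acts on each of the $2N-3$ free factors (the $L_i$, the $R_i$, and $I(A)$) through the automorphism $\tau$ of $A$. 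The final, most laborious, step is to classify the maximal--rank direct products of free groups in $\widehat G$. Projecting to $\langle\tau\rangle\cong\Z$, any factor of $D$ lying in $K\times I(A)$ must commute with the $\tau$--direction and hence be fixed by $\tau$, which is exactly what produces the $\Fix(\tau)$--restricted groups $L_i^\tau,R_i^\tau,I(A)^\tau$. One then checks that the cohomological--dimension budget $2N-3$ allows at most one factor to carry the $\tau$--direction, and examines where that $\Z$ can sit: it either splits off as $\langle\tau\rangle$ with all twists $\Fix(\tau)$--restricted (the second option), or it is amalgamated with a single unrestricted factor normalised by $\tau$ --- the full left--transvection group $L_1$ (the third option, the $R_1$ case being conjugate via the symmetry $x_i\mapsto x_i^{-1}$) or the full inner group $I(A)$ (the fourth option).

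I expect the two genuine obstacles to be the centraliser/fixed--subgroup dichotomy of the second paragraph --- making precise how conjugation acts on twist parameters and combining the Bestvina--Handel bound on fixed subgroups with the virtual freeness of $\gl_2(\Z)$ to isolate $\tau$ --- and the concluding analysis inside $\widehat G=F_2^{2N-3}\rtimes_\tau\Z$, where one must control which elements of the free factors have $\tau$--invariant conjugacy classes and verify that exactly one factor can absorb the $\tau$--direction, yielding precisely the four listed groups.
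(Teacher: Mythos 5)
Your route is the same as the paper's: Theorem~\ref{t:B} places a conjugate of $D$ in the vertex stabiliser $\st_A\cong M_{2N-4}(A)\rtimes W_{N-2}$ of Proposition~\ref{p:stab-description-rose}; one then shows the projection of $D$ to $W_{N-2}$ is trivial; and the heart of the matter is the classification of direct products of $2N-3$ nonabelian free groups in $M_{2N-4}(F_2)$ (the paper's Theorem~\ref{t:all-the-D}), driven by Collins--Turner (Proposition~\ref{p:CT}) and product-rank counts --- your second and third paragraphs are an outline of that classification. However, two steps that you treat as routine contain genuine gaps. The most serious is the claim that triviality of the $W_{N-2}$-projection follows from ``a rank count''. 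Write $G=\st_A$ and $K=L_1\times\cdots\times R_{N-2}\cong F_2^{2N-4}$, and suppose $\phi\in D_1$ has nontrivial signed-permutation image $\sigma\in W_{N-2}$. Then $C_G(\phi)$ contains the remaining $2N-4$ factors of $D$, so you need $\pr(C_G(\phi))\leq 2N-5$ to reach a contradiction. But the best the count gives is $2N-4$: conjugation by $\phi$ permutes the $2N-4$ twist coordinates of $K$ according to $\sigma$ with automorphic twisting, so $C_K(\phi)$ is, cycle by cycle, the fixed subgroup of an automorphism of $F_2$, whence $\pr(C_K(\phi))\leq 2N-5$ (the number of cycles of a nontrivial $\sigma$), while the image of $C_G(\phi)$ in $\aut(A)\times W_{N-2}$ has product rank at most $1$; subadditivity (Lemma~\ref{l:product_rank_exact_sequences}) then gives $\pr(C_G(\phi))\leq (2N-5)+1=2N-4$, exactly matching the lower bound, and no contradiction results. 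This is precisely why the paper does not argue this way: Proposition~\ref{p:augment} instead locates the characteristic subgroups $[D_i^*,D_i^*]$ inside the individual coordinate factors $\LL_i$ (using part (1) of Theorem~\ref{t:all-the-D}) and observes that conjugation by any element with $\sigma\neq 1$ permutes the $\LL_i$, so cannot normalise these subgroups, although every element of $D$ does.

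Two further assertions are false as stated, although their conclusions can be repaired. First, a hyperbolic class in $\gl_2(\Z)$ \emph{does} have invariant conjugacy classes: every automorphism of $F_2$ preserves the conjugacy class of $[a_1,a_2]^{\pm 1}$, so nontrivial elements of $K$ can centralise $\phi$, and $C_G(\phi)$ need not be virtually cyclic (it can contain free abelian subgroups of rank up to $2N-4$). What is true, and is all your argument needs, is that each coordinate map defining $C_K(\phi)$ is an automorphism in a hyperbolic (or finite) outer class, hence not a power of a Nielsen transformation, so its fixed subgroup is cyclic by Proposition~\ref{p:CT}; therefore $C_K(\phi)$ is abelian and subadditivity gives $\pr(C_G(\phi))\leq 0+1=1<2N-4$. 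Second, in your final step, a factor of $D$ lying in $K\times I(A)$ and commuting with the factor carrying the $\tau$-direction is \emph{not} thereby fixed by $\tau$: it is fixed only by $\ad_w\circ\tau^p$ for some $w\in A$, and arranging $w=1$ simultaneously for all factors is exactly the conjugation by a carefully chosen twist --- built from Lemma~\ref{l:similar} and the identity \eqref{e:equal} --- that occupies most of the proof of Theorem~\ref{t:all-the-D}(2). As it stands, your proposal asserts the output of that classification (the four listed groups) rather than deriving it, so the ``laborious final step'' you defer is in fact the main content of the theorem.
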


In Section 7 we shall prove that  every direct product of $2N-4$ nonabelian free groups in $\out(F_N)$ is the image of 
one of the subgroups listed in Theorem~\ref{t:direct-products-aut}.

It is easy to see that if one of the subgroups listed in Theorem~\ref{t:direct-products-aut}
does not have a factor contained in a copy of $M_\tau=F_2\rtimes_\tau\Z$, then that subgroup is contained in a maximal
subgroup of the same form. In Section 8, we shall prove the less obvious fact that every nonabelian free group
of $M_\tau$ is also contained in a maximal one.

\begin{corA}
Let $N \geq3$ and let $\mathcal{D}$ be the family of subgroups of either $\aut(F_N)$ or $\out(F_N)$ that are direct products of $2N-3$ or $2N-4$ nonabelian free groups, respectively. Then every $D \in \mathcal{D}$ is contained in a maximal element (with respect to inclusion).
\end{corA}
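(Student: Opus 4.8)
I would prove the two cases in parallel. In $\aut(F_N)$ the inputs are Theorem~\ref{t:B} and the explicit groups of Theorem~\ref{t:direct-products-aut}; in $\out(F_N)$ one uses instead Theorem~\ref{t:thmA} together with the Section~7 result, which identifies the maximal-rank products there with the images of those groups. Since the two arguments coincide after this translation---passing to $\out(F_N)$ simply deletes the inner factor $I(A)$, dropping the number of factors from $2N-3$ to $2N-4$---I shall describe only the case of $\aut(F_N)$.

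The organizing idea is that Theorems~\ref{t:thmA} and~\ref{t:B} confine the whole problem to a single stabilizer. Fix $D\in\mathcal{D}$. If $D\le M$ with $M\in\mathcal{D}$, then by Theorem~\ref{t:B} both fix unique collapsed roses; the rose of $M$ is fixed by $M$, hence by $D$, so by uniqueness it equals the rose $T$ of $D$, and the same holds for their fixed points on the Bass--Serre tree. Thus every $M\in\mathcal{D}$ above $D$ lies in the stabilizer $S$ of $T$ and of that fixed point, and the conjugacy furnished by Theorem~\ref{t:direct-products-aut}---which carries $T$ to the standard collapsed rose---may be applied once and places $D$, and simultaneously every such $M$, inside one of the four explicit groups $G$. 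Each $G$ is a direct product of copies of $F_2$ (transvection groups $L_i,R_i$ or the inner group $I(A)$, or their $\tau$-restricted analogues), together with at most one copy of $M_\tau=F_2\rtimes_\tau\Z$ and possibly a central factor $\langle\tau\rangle$. In the last situation I would first enlarge the ambient: the second of the four groups is contained in the fourth (the two agree except that $I(A)^\tau\times\langle\tau\rangle$ is replaced by $\langle I(A),\tau\rangle$), so after replacing $G$ by this larger standard group we may assume that $\tau$, when present, lies inside an $M_\tau$-factor.

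Now I would construct a maximal element $M_0\supseteq D$ factor by factor. On each $F_2$-factor the unique maximal nonabelian free subgroup is the whole factor---this is the easy observation, and here each free factor of $D$ is enclosed in the full $F_2$. On the single $M_\tau$-factor I would instead invoke the Section~8 statement to enclose $D\cap M_\tau$ in a maximal nonabelian free subgroup $F\le M_\tau$. The product $M_0$ of these pieces is an element of $\mathcal{D}$ containing $D$, and it is maximal: any $M\in\mathcal{D}$ with $M_0\le M$ fixes the same $T$, so lies in $S$ and occupies one of the four standard forms; since $M_0$ already saturates each factor---full $F_2$, or a maximal free subgroup of $M_\tau$---and since the presence or absence of a nonzero $\tau$-exponent in $M_0$ determines which form can contain it, no strict enlargement is possible and $M=M_0$.

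The main obstacle is the Section~8 input: that \emph{every} nonabelian free subgroup $H$ of the free-by-cyclic group $M_\tau=F_2\rtimes_\tau\Z$ lies in a maximal one. I expect this to go through the projection $\phi\colon M_\tau\to\Z$ recording the $\tau$-exponent. If $\phi(H)=0$ then $H\le\ker\phi=F_2$, and this fibre is itself a maximal nonabelian free subgroup, since any element of nonzero $\tau$-exponent commutes with $a_2$ and would produce a $\Z^2$. The substantive case is $\phi(H)\neq 0$, where $H\cap F_2$ is a normal---and necessarily infinite-rank---free subgroup; the work is to show that such an $H$, transverse to the fibre, always lies in one of a controlled family of maximal free subgroups. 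This analysis of the free subgroups of $M_\tau$ is the technical heart of the proof; by contrast the translation to $\out(F_N)$, the absorption of the central $\tau$, and the matching of the direct-product decompositions of $D$ and $G$ are routine once Theorems~\ref{t:thmA} and~\ref{t:B} have localized everything inside the single stabilizer $S$.
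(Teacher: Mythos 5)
Your reduction is essentially the paper's own: both arguments funnel everything through Theorems~\ref{t:B} and~\ref{t:direct-products-aut} (resp.\ Theorem~\ref{t:direct-products-out} for $\out(F_N)$), note that slots which are full copies of $F_2$ cause no difficulty, and isolate the copies of $M_\tau=F_2\rtimes_\tau\Z$ as the only issue. But there is a genuine gap, and it sits exactly where you place ``the main obstacle'': the statement that every nonabelian free subgroup of $M_\tau$ is contained in a maximal one is not an available black box --- it \emph{is} the substance of the paper's Section~8 proof of this corollary, and your proposal does not prove it. Your sketch (split on the image of $H$ in $\Z$ and show that in the substantive case $H$ ``lies in one of a controlled family of maximal free subgroups'') leaves that case entirely open, and it misses the idea that makes the problem tractable at all. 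The danger is real: an increasing union of free groups need not be free --- the paper exhibits Kurosh's locally free, non-free group inside a hyperbolic $3$-manifold group precisely to make this point --- so Zorn's lemma cannot be applied to the poset of free subgroups of a general group, and no ``controlled family'' is exhibited. The paper's key step is the characterization that a subgroup of $M_\tau$ is free \emph{if and only if} it contains no $\Z^2$, proved via an analysis of cylinders in the limiting cyclic splitting of $\tau$ (blowing up vertex actions and collapsing cylinders); since the condition ``contains no $\Z^2$'' is manifestly preserved under increasing unions, Zorn's lemma then applies. Nothing in your proposal plays the role of this closure property, and without it the maximality claim you invoke is unproven.

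Two further inaccuracies are worth flagging. First, your justification that the fibre $F_2=\ker(M_\tau\to\Z)$ is a maximal free subgroup is wrong: it is false that ``any element of nonzero $\tau$-exponent commutes with $a_2$'' --- an element $\ad_w\tau^k$ commutes with $a_2$ only when $w\in\langle a_2\rangle$, and by Proposition~\ref{p:CT} a general $\ad_w\tau^k$ need not commute with any nonabelian (or even any nontrivial) subgroup of $F_2$, so no $\Z^2$ is produced this way. The conclusion is still true, but for a different reason: if $F_2<K\leq M_\tau$ with $K$ free, then $K$ contains the nontrivial finitely generated group $F_2$ as a normal subgroup of infinite index, which is impossible in a free group. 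Second, in the second and fourth standard forms, Theorem~\ref{t:all-the-D}(2) only gives $D_i<\LL_i^\tau\times\langle(1,\dots,1;\tau)\rangle$: the factors of $D$ are \emph{diagonal} across an $F_2$-slot and the central copy of $\Z$, so ``each free factor of $D$ is enclosed in the full $F_2$'' is not literally available, and the slot-by-slot construction of $M_0$ (in particular, deciding which powers of $\tau$ must be absorbed into which factor) is not the ``routine matching'' you defer --- in this diagonal case the enlargement again happens inside copies of $M_\tau$ such as $\langle L_i,\tau\rangle$, which is one more reason the $M_\tau$ statement cannot be bypassed.
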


A further consequence of Theorem~\ref{t:direct-products-aut} is that the centralizer of the direct product
$D$ is cyclic, and when it is non-trivial it is generated by a Nielsen automorphism. This yields the following rigidity result,
which plays a crucial role in \cite{BW2}.

\begin{thmA}\label{t:nielsen}
Let $\G$ be a finite-index subgroup of $\Aut(F_N)$ with $N \geq 3$  and let $f \colon \G \to \Aut(F_N)$ be an injective map. Every 
power of a Nielsen automorphism is mapped to a power of a Nielsen automorphism under $f$.
\end{thmA}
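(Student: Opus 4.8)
The plan is to characterise powers of Nielsen automorphisms intrinsically, as the non-trivial elements in centralisers of maximal-rank direct products of free groups, and then to transport this characterisation across $f$. The key input is the consequence of Theorem~\ref{t:direct-products-aut} recorded just above its statement: if $D<\Aut(F_N)$ is a direct product of $2N-3$ nonabelian free groups, then its centraliser $C_{\Aut(F_N)}(D)$ is cyclic, and is generated by a Nielsen automorphism whenever it is non-trivial. Granting this, the theorem becomes a reduction argument.

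First I would show that every Nielsen automorphism centralises some maximal-rank direct product. For the standard Nielsen automorphism $\tau$ (with $a_1\mapsto a_1a_2$) the $2N-3$ groups $L_1^\tau,\dots,L_{N-2}^\tau,R_1^\tau,\dots,R_{N-2}^\tau,I(A)^\tau$ are nonabelian free, pairwise commute, and all commute with $\tau$ by construction (this is exactly the content of the second family in Theorem~\ref{t:direct-products-aut}); write $D_\tau$ for their direct product. An arbitrary Nielsen automorphism $\sigma$ is a transvection in some basis, so after a change of basis it is standard, i.e.\ $\sigma=\phi\tau\phi^{-1}$ for some $\phi\in\Aut(F_N)$; then $\sigma$ centralises the maximal-rank direct product $E:=\phi D_\tau\phi^{-1}$, conjugation preserving both the direct-product structure and the rank.

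Now fix such a $\sigma$ and a non-trivial power $\sigma^k\in\Gamma$. Since $\Gamma$ has finite index, $\Gamma\cap G$ has finite index in $G$ — and is therefore again nonabelian free — for each of the $2N-3$ free factors $G$ of $E$; their internal product $E_1=\prod_G(\Gamma\cap G)$ is a direct product of $2N-3$ nonabelian free groups contained in $E\cap\Gamma$. Because $\sigma^k$ centralises $E\supseteq E_1$ and $f$ is an injective homomorphism, $f(\sigma^k)$ centralises $f(E_1)$, while $f(E_1)\cong E_1$ is again a direct product of $2N-3$ nonabelian free groups. Applying the centraliser consequence of Theorem~\ref{t:direct-products-aut} to $f(E_1)$ shows that $C_{\Aut(F_N)}(f(E_1))$ is cyclic; it is non-trivial since it contains $f(\sigma^k)\neq 1$, hence it equals $\langle\rho\rangle$ for some Nielsen automorphism $\rho$. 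Therefore $f(\sigma^k)=\rho^j$ is a power of a Nielsen automorphism, as required.

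The genuine difficulty lies upstream, in the centraliser consequence itself: one must use the classification in Theorem~\ref{t:direct-products-aut} to place $f(E_1)$ into one of the four standard forms and then verify that its centraliser remains cyclic even when $f(E_1)$ is only a ``thin'' subproduct of the corresponding maximal group — it is the nonabelian (hence infinite, non-cyclic) factors that pin the centraliser down, so that passing to finite-index subfactors costs nothing. Taking that for granted, the only points here needing care are the reduction of an arbitrary Nielsen automorphism to the standard $\tau$ by conjugation, and the passage to the finite-index subproduct $E_1\le\Gamma$ while preserving the centralising relation, both of which are routine.
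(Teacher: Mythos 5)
Your proposal is correct and follows essentially the same route as the paper: the paper's own proof also notes that a power of a Nielsen automorphism centralises the direct product of $2N-3$ nonabelian free groups appearing in the second case of Theorem~\ref{t:direct-products-aut}, passes to finite-index subgroups of the factors to land inside $\G$, applies $f$, and invokes Proposition~\ref{p:centralizer} (your ``centraliser consequence'') to conclude. Your explicit conjugation step reducing an arbitrary Nielsen automorphism to the standard $\tau$ is a detail the paper leaves implicit, but otherwise the two arguments coincide.
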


\subsection*{Techniques and proofs}

In the remainder of this introduction, we shall describe the structure of this paper and sketch some of the main ideas that
go into the proofs of the main results. Throughout, we assume $N \geq 3$. 

Our first goal is to establish the existence of the fixed point described in Theorem \ref{t:thmA}. 
Let $D$ be a direct product of $2N-4$ nonabelian free groups in $\out(F_N)$ and let $\widetilde D$ be 
its preimage in $\aut(F_N)$.  Our starting point
is Theorem~6.1 from \cite{HW2}, where actions of $\out(F_N)$ on relative free factor complexes 
were used to show that $D$ fixes a one-edge nonseparating free splitting of $F_N$. 
Lemma \ref{l:lifting_condition} tells us that a
 simplicial tree  $T$ in the boundary of Outer space will be fixed by $D$  if and only if the $F_N$-action on $T$ extends to an action of $\widetilde D $ on $T$ (where $F_N$ is identified with the group of inner automorphisms in $\widetilde D$). 
We apply this to the one-edge splitting fixed by $D$,  blowing-up the
action of $\widetilde D$ on the Bass--Serre tree to obtain the action on a
 collapsed rose with $N-2$ petals that we seek; this blow-up, which is described in Section~\ref{s:main_thm},
  is constructed using a  \emph{graph of actions} in the
 sense of Levitt \cite{Lev2}. A key point is to argue that  the  stabilizer in $\widetilde D$ of a vertex $v\in T$
  acts on a collapsed rose with $N-3$ petals, and that the adjacent edge stabilizers $\widetilde D_e$ for the one-edge splitting are elliptic in this new action; this last property implies the edges of the old tree  can be glued onto the
  new tree in a coherent fashion.
  % coherently `glued on' to the new tree, so that the blow up is well-defined). The full details are in Section~\ref{s:main_thm}.

 The uniqueness of the fixed point described in Theorem~A is tackled separately. By work of Guirardel and Horbez \cite[Section~6]{GHmeasure}, if there were two collapsed roses with $N-2$ petals fixed by $D$ then they would belong to the same deformation space, and a folding path between these two collapsed roses would have to be fixed by $D$. However, $D$ is too large to fix any graph of groups decomposition of $F_N$ with more than one vertex group, which means the folding path must be trivial. Details are given in Section~\ref{s:fix}. 

Section \ref{s:4} contains an analysis of the stabilizers in $\aut(F_N)$ of collapsed roses and similar graphs. This
analysis plays a significant role in the proof of Theorem \ref{t:thmA}, and it renders the deduction of  Theorem \ref{t:B}
straightforward, as we shall see at the end of Section~\ref{s:main_thm}. The analysis of stabilizers of collapsed
roses also provides a crucial bridge from Theorem \ref{t:thmA} to Theorem
\ref{t:direct-products-aut}. In particular, with Theorem \ref{t:thmA} in hand, 
Proposition \ref{p:stab-description-rose} essentially reduces Theorem \ref{t:direct-products-aut} to an analysis
of the ways in which a direct product of $k+1$ nonabelian free groups can embed in 
\[ M_k(F_2) := F_2^k \rtimes \aut(F_2), \]
where the action of $\aut(F_2)$ in this semidirect product is diagonal. These embeddings are described in
Theorem \ref{t:all-the-D}; the required algebra is surprisingly delicate.
Given that our main results involve free groups of higher rank, it seems incongruous that special features of
$\aut(F_2)$ should play a crucial role at this stage of the proof, but nevertheless this is the case.
A key fact that makes many arguments in Section~\ref{s:slim_insert} work is that powers of Nielsen transformations are the only
 automorphisms of $F_2$ that have nonabelian fixed subgroups \cite{CT}.  
This special property lies behind the appearance of the Nielsen transformation $\tau$ in Theorem \ref{t:direct-products-aut}.  

\subsection*{Acknowledgements}  We are grateful to Mladen Bestvina, Sebastian Hensel and Camille Horbez  
for many stimulating conversations related to this work. 
The second author is supported by a University Research Fellowship from the Royal Society.

\section{Product rank, splittings, and automorphic lifts}

\subsection{Direct products of free groups}

A group $G$ has \emph{product rank} $\pr(G)=k$ if $k$ is the largest integer  such that $G$ contains a direct product of $k$ nonabelian free groups  (possibly $\pr(G)=\infty$). To understand how product rank behaves with respect to homomorphisms between groups, we make use of the following standard lemma.

\begin{lemma}\label{l:free_subgroup}
Let $K$ be a normal subgroup of a direct product of nonabelian free groups $G_1 \times G_2 \times \cdots \times G_k$. Suppose that $\pr(K)=l$. Then, after reordering the factors, $K$ is a normal subgroup of $G_1 \times G_2 \times \cdots \times G_l\times 1\times\dots\times 1$.
\end{lemma}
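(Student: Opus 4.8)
The plan is to locate the factors on which $K$ projects nontrivially and show there are at most $l$ of them. Everything rests on one standard fact about free groups, which I would record first: a nontrivial normal subgroup $N$ of a nonabelian free group $F$ is itself a nonabelian free group. Indeed, a nontrivial abelian (hence cyclic) normal subgroup $N=\langle w\rangle$ would make $g\mapsto \varepsilon(g)$, defined by $gwg^{-1}=w^{\varepsilon(g)}$, a homomorphism $F\to\{\pm 1\}$ whose kernel has index at most $2$ and centralizes $w$; but a finite-index subgroup of a nonabelian free group is again nonabelian, whereas the centralizer of $w$ in $F$ is cyclic, a contradiction. Since subgroups of free groups are free, $N$ is then nonabelian free.

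Write $p_i\colon G_1\times\cdots\times G_k\to G_i$ for the coordinate projections and set $S=\{\,i : p_i(K)\neq 1\,\}$. Every element of $K$ has trivial coordinate in each $G_i$ with $i\notin S$, so $K\leq \prod_{i\in S}G_i$ at once. It therefore suffices to prove $|S|\leq l$: we may then reorder the factors so that $S\subseteq\{1,\dots,l\}$ and conclude $K\leq G_1\times\cdots\times G_l\times 1\times\cdots\times 1$, normality of $K$ in this subgroup being inherited from its normality in the ambient product.

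To bound $|S|$ I would exhibit a direct product of $|S|$ nonabelian free groups inside $K$. The crucial point is that $K\cap G_i$ is nontrivial for each $i\in S$, where $G_i$ is identified with the $i$-th factor $1\times\cdots\times G_i\times\cdots\times 1$. Given $k\in K$ with $k_i:=p_i(k)\neq 1$, choose $g_i\in G_i$ not commuting with $k_i$ (possible since centralizers in $G_i$ are cyclic and $G_i$ is nonabelian), and regard $g_i$ as an element of the ambient product. Because $K$ is normal, the commutator $[k,g_i]$ lies in $K$; computed coordinatewise it equals $[k_i,g_i]$ in the $i$-th factor and is trivial in all others, so $[k,g_i]$ is a nontrivial element of $K\cap G_i$. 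Now $K\cap G_i$ is normal in $G_i$, hence nonabelian free by the recorded fact. As distinct factors commute and meet trivially, the subgroups $\{K\cap G_i\}_{i\in S}$ generate their internal direct product inside $K$, giving $\pr(K)\geq |S|$, i.e. $|S|\leq l$, as required.

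The argument is essentially obstruction-free once the fact on normal subgroups of free groups is in hand; the single point needing a moment's care is the coordinatewise commutator computation, which is precisely what upgrades the statement ``$K$ projects nontrivially to $G_i$'' into ``$K$ meets $G_i$ nontrivially'' and thereby feeds into the product-rank estimate.
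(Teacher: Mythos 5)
Your proposal is correct and follows essentially the same route as the paper: the commutator $[k,g_i]$ you compute coordinatewise is exactly the paper's conjugation of $g$ by $(h,1,\dots,1)$, and both arguments then upgrade ``nontrivial projection to $G_i$'' into ``nontrivial (hence, by normality, nonabelian) intersection with $G_i$'' and count such factors against $\pr(K)=l$. Your write-up is merely more explicit about the auxiliary facts (normal subgroups of nonabelian free groups are nonabelian free; the intersections generate an internal direct product), which the paper leaves implicit.
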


\begin{proof}
Let $g=(g_1,g_2, \ldots,g_k) \in K$ and without loss of generality assume $g_1 \neq 1$. Let $h \in G_1$ be an element that does not commute with $g_1$. Conjugation by $(h,1,1,\ldots,1)$ shows that $(hg_1h^{-1},g_2,g_3,\ldots,g_k) \in K$, so  $(hg_1h^{-1}g_1^{-1},1,1,\ldots,1) \in K$. Hence if $K$ has a nontrivial projection to a factor then it intersects that factor in an infinite normal (hence nonabelian) subgroup. As $\pr(K)=l$, it intersects exactly $l$ factors.
\end{proof}

The following easy consequence of Lemma \ref{l:free_subgroup} is a variation on \cite[Lemma~6.3]{HW2}.

\begin{lemma}\label{l:product_rank_exact_sequences}
If $H$ is a finite-index subgroup of $G$ then $\pr(H)=\pr(G)$. If \[ 1 \to N \to G \to Q \to 1 \] is an exact sequence of groups then $\pr(G) \leq \pr(N) + \pr(Q)$.
\end{lemma}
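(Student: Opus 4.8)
The statement splits into two independent claims, which I would handle separately.

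For the finite-index claim, the inequality $\pr(H)\le\pr(G)$ is immediate, since any direct product of nonabelian free groups sitting inside $H$ also sits inside $G$. For the reverse inequality I would start with a direct product $F=G_1\times\cdots\times G_k$ of $k$ nonabelian free groups in $G$ and pass to $H\cap F$, which has finite index in $F$. Then $H\cap G_i$ has finite index in each factor $G_i$, and by Nielsen--Schreier a finite-index subgroup of a nonabelian free group is again nonabelian free. Since distinct factors $G_i$ commute and meet trivially, the subgroups $H\cap G_1,\dots,H\cap G_k$ generate their internal direct product, which lies in $H$; hence $\pr(H)\ge k$. Taking $k=\pr(G)$ (or letting $k\to\infty$ when $\pr(G)=\infty$) finishes this part.

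For the exact-sequence claim, I would take a direct product $D=G_1\times\cdots\times G_m$ of $m$ nonabelian free groups in $G$ and aim to show $m\le\pr(N)+\pr(Q)$. Setting $K=D\cap N$, normality of $N$ in $G$ makes $K$ normal in $D$, so Lemma~\ref{l:free_subgroup} applies: with $l=\pr(K)$, after reordering the factors, $K$ is contained in $G_1\times\cdots\times G_l\times1\times\cdots\times1$. As $K\le N$ we obtain $l\le\pr(N)$ for free.

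The key step is to bound the complementary number $m-l$ by $\pr(Q)$, and this is exactly where the precise conclusion of Lemma~\ref{l:free_subgroup} earns its keep. Because $K$ is confined to the first $l$ factors, it meets the complementary subgroup $G_{l+1}\times\cdots\times G_m$ trivially, so the composite $G_{l+1}\times\cdots\times G_m\hookrightarrow D\to D/K$ is injective; hence $D/K$ contains a direct product of $m-l$ nonabelian free groups and $\pr(D/K)\ge m-l$. Since $D/K\cong DN/N$ embeds in $G/N=Q$, we also have $\pr(D/K)\le\pr(Q)$, and combining the two bounds gives $m\le l+\pr(Q)\le\pr(N)+\pr(Q)$. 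The main obstacle is genuinely this last step: a surjection need not respect product rank, so without Lemma~\ref{l:free_subgroup} guaranteeing that $D\cap N$ avoids the last $m-l$ factors, one could not be sure those factors survive injectively in the quotient $D/K$.
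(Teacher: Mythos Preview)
Your proof is correct and follows essentially the same approach as the paper's (suppressed) argument: intersect a direct product in $G$ with $H$ factor-by-factor for the first claim, and for the second claim apply Lemma~\ref{l:free_subgroup} to $K=D\cap N$ to split the factors into $l$ that contribute to $\pr(N)$ and $m-l$ that inject into $D/K\hookrightarrow Q$. Your write-up simply makes explicit the steps (Nielsen--Schreier, the injection of the last $m-l$ factors into $D/K$) that the paper leaves implicit.
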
  

\iffalse

\begin{proof}
For the first part, as $H < G$ we have $\pr(H) \leq \pr(G)$. Conversely, if $G_1 \times G_2 \times \cdots \times G_k$ is a direct product of nonabelian free groups in $G$, then $H_i=H \cap G_i$ is a nonabelian free group and $H_1 \times H_2 \times \cdots \times H_k$ is a direct product of nonabelian free groups in $H$. For the second part, suppose $\pr(G)=k$
and let  $D=G_1 \times G_2 \times \cdots \times G_k<G$ be a direct product 
of nonabelian free groups. By applying  Lemma~\ref{l:free_subgroup} to $K:=N\cap D <D$, we find an integer $l$ such that 
 $\pr(K)=l$  and  $ \pr(D/K)\ge k-l$.  As $K<N$ and $D/K<Q$, this implies that
$\pr(N) \geq l$ and  $\pr(Q) \geq k-l$. 
\end{proof}
\fi

\medskip

\def\B{\mathcal{B}}

\noindent{\bf The product rank of $\Aut(F_N)$ and $\Out(F_N)$.}

Fixing a basis $\B = \{a_1,a_2, x_1,\ldots,x_{N-2}\}$ for $F_N$, one obtains a direct product of $2N-4$ free groups of rank $2$ in $\Aut(F_N)$ as follows.
For $i=1,\dots, {N-2}$ let $L_i$ be the subgroup consisting of automorphisms of the form
$[x_i\mapsto w x_i, \ x_j\mapsto x_j \, (j\neq i)]$, where $w$ is a word in the free group on $\{a_1, a_2\}$,
and let $R_i$ be the subgroup consisting of automorphisms of the form
$[x_i\mapsto x_i w, \ x_j\mapsto x_j \, (j\neq i)]$. Each $L_i$ and $R_i$ is a free group of rank $2$, and these subgroups
generate a direct product $D_\B= L_1\times R_1\times\dots\times L_{N-2}\times R_{N-2}< \Aut(F_N)$. As $D_\B$
contains no inner automorphisms, it injects into  $\Out(F_N)$. Theorem~6.1 of \cite{HW2} shows that $\Out(F_N)$ does not
contain a direct product of $2N-3$ nonabelian free groups if $N>2$, thus
 \[\pr(\Out(F_N))=2N-4\] 
 for $N\ge 3$. (Note that the virtual cohomological
dimension of $\Out(F_N)$, which gives an upper bound on product rank, is $2N-3$.) 

The conjugations of $F_N$ by $a_1$ and $a_2$ generate a further
 free subgroup $I(A)<\Aut(F_N)$ that commutes with $D_\B$. As $I(A)\cap D_\B$ is trivial,
 we get \[\pr (\aut(F_N)) \ge 2N-3,\]
and by Lemma \ref{l:product_rank_exact_sequences} we must have equality when $N\ge 3$.
Since $\aut(F_2)$ does not contain a direct product of two nonabelian free
groups \cite{gordon} (see also Corollary~\ref{c:only-tau} (2) below), we have equality in the case $N=2$ as well.

We summarize this discussion for later use:  

\begin{proposition}[\cite{HW2}, Theorem~6.1] \label{p:product_rank_aut_out}
For every $N \geq 2$ we have \[\pr (\aut(F_N)) =2N-3.\] For every $N \geq 3$ we have \[\pr(\Out(F_N))=2N-4.\]
\end{proposition}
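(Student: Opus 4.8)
The plan is to treat each equality as a matching pair of inequalities, importing the one genuinely hard ingredient from Theorem~6.1 of \cite{HW2} and deducing everything else from Lemmas~\ref{l:free_subgroup} and \ref{l:product_rank_exact_sequences} together with the explicit subgroups $D_\B$ and $I(A)$ constructed above. I would organise the proof as: the $\Out(F_N)$ statement first (since it feeds into the $\Aut(F_N)$ bound), then $\Aut(F_N)$ for $N \geq 3$, and finally the separate low-rank case $N=2$.

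For $\Out(F_N)$ with $N \geq 3$, I would establish the lower bound by observing that $D_\B$ consists entirely of non-inner automorphisms, so $D_\B \cap \Inn(F_N) = 1$ and the quotient map $\Aut(F_N)\to\Out(F_N)$ restricts to an isomorphism on $D_\B$. Its image is therefore a direct product of $2N-4$ nonabelian free groups, giving $\pr(\Out(F_N)) \geq 2N-4$. The matching upper bound is precisely the assertion of \cite[Theorem~6.1]{HW2} that $\Out(F_N)$ contains no direct product of $2N-3$ nonabelian free groups when $N>2$. This is the single step I would not reprove: it rests on the action of $\Out(F_N)$ on relative free factor complexes and is the crux of the whole proposition.

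For $\Aut(F_N)$ with $N \geq 3$, the lower bound comes from enlarging $D_\B$ by the subgroup $I(A)$ of conjugations by $A=\langle a_1,a_2\rangle$. Here I would verify three routine points: that $I(A)$ is a nonabelian free group, since the conjugation map $F_N \to \Inn(F_N)$ is injective ($F_N$ being centreless) and hence $I(A)\cong A\cong F_2$; that $I(A)$ commutes with each $L_i$ and $R_i$, by direct inspection of the defining formulas; and that $I(A)\cap D_\B = 1$, because $D_\B$ contains no inner automorphism. These yield a direct product $D_\B \times I(A)$ of $2N-3$ nonabelian free groups, so $\pr(\Aut(F_N)) \geq 2N-3$. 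For the upper bound I would feed the short exact sequence $1 \to \Inn(F_N) \to \Aut(F_N) \to \Out(F_N) \to 1$ into Lemma~\ref{l:product_rank_exact_sequences}. Since $\Inn(F_N)\cong F_N$ is free, and a free group has cyclic centralizers and hence contains no direct product of two nonabelian free groups, we have $\pr(\Inn(F_N))=1$; combined with the already-established value $\pr(\Out(F_N))=2N-4$, the lemma gives $\pr(\Aut(F_N)) \leq 1 + (2N-4) = 2N-3$, completing the case $N\geq 3$.

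Finally, the case $N=2$ requires $\pr(\Aut(F_2))=1$. The lower bound is immediate because $\Inn(F_2)\cong F_2$ is a nonabelian free subgroup, so $\pr(\Aut(F_2))\geq 1$. For the upper bound I would invoke \cite{gordon}, which rules out a direct product of two nonabelian free groups inside $\Aut(F_2)$, giving $\pr(\Aut(F_2))\leq 1 = 2\cdot 2 - 3$. The only conceptually difficult ingredient anywhere in the argument is the upper bound of \cite[Theorem~6.1]{HW2}; everything else reduces to the explicit product $D_\B \times I(A)$ and a single application of the exact-sequence estimate.
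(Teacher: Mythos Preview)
Your proposal is correct and follows essentially the same approach as the paper: the discussion preceding the proposition already lays out exactly these steps, using $D_\B$ and $D_\B\times I(A)$ for the lower bounds, \cite[Theorem~6.1]{HW2} and Lemma~\ref{l:product_rank_exact_sequences} for the upper bounds, and \cite{gordon} for the $N=2$ case. Your write-up is slightly more explicit in justifying the routine verifications (that $I(A)\cong F_2$, that centralizers in a free group are cyclic so $\pr(F_N)=1$), but the logical structure is identical.
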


Since $\Out(F_2)\cong {\rm{GL}}(2,\Z)$ is virtually free, $\pr(\Out(F_2))=1$.

\subsection{Splittings and their stabilizers}\label{s:splittings_background}

A {\em splitting} of a group $G$ is a minimal, simplicial left action on a tree. (The terminology comes from the fact that the
quotient graph of groups splits $G$ in terms of amalgamated free products and HNN extensions \cite{Serre}.) The splitting is said to be {\em free}
if all edge stablizers are trivial. Two splittings $T$ and $T'$ are deemed {\em equivalent} if there is a $G$-equivariant 
simplicial isomorphism from $T$ to $T'$. The trees that we consider 
are not allowed to have vertices of valence two. (The quotient graph of groups
may still have  vertices $v$ of valence two, in which case the vertex group $G_v$ will be nontrivial.) We say that $T'$ is a \emph{collapse} of $T$ if the action of $G$ on $T'$ is obtained by equivariantly collapsing a forest in $T$. Going in the opposite direction, we say that $T$ is a \emph{refinement} of $T'$ if $T'$ is a collapse of $T$.  Two splittings are said to be {\em compatible} if they have a common refinement.

We shall be concerned almost entirely with the case $G=F_N$.

Each vertex stabilizer of a free splitting of $F_N$ is a free factor. 
We work with the standard left action of $\Aut(F_N)$ on $F_N$.
There is then a right action  
of
$\Aut(F_N)$  on the set of all free splittings of $F_N$: the action of $\phi\in \Aut(F_N)$ sends $f: F_N\to {\rm{Isom}}(T)$ to 
$f\circ\phi$. This action respects equivalence classes of $F_N$-trees, and the inner automorphisms leave each equivalence class invariant. 
Thus there is an induced action of $\Out(F_N)$ on the set of equivalence classes of free splittings of $F_N$.
Stabilizers under this action have been studied extensively in the literature; the most general results (replacing $F_N$ with an arbitrary group and allowing more general splittings) appear in work of Bass--Jiang and Levitt \cite{BJ, Lev}. 

We write $\FS$ for the set of 
equivalence classes of free splittings of $F_N$. When there is no danger of ambiguity, we shall not distinguish between a free splitting $T$ and its equivalence class $[T]$. 

Unpacking the definitions,  we see that  $[T]\in \FS$ is fixed by an outer automorphism $\Phi  \in \out(F_N)$ if and only if for each representative $\phi \in \Phi$ there is a homeomorphism $f_{\phi}:T \to T$ such that 
\begin{equation}
f_\phi(gx)=\phi(g)f_\phi(x)
\end{equation} for all $x \in T$ and $g \in F_N$; in other words, $[T]$ is fixed by $\phi \in \Aut(F_N)$ if and only if there is an isomorphism from $T$ to itself that is  `$\phi$-twistedly equivariant'. The map $f_\phi$ is unique. (This is true, more generally, for stabilizers of minimal irreducible $G$-trees.)  If $\phi$ is conjugation by $g$, then $f_\phi(x) = gx$.  

We use $\Stab(T)$ to denote the stabilizer of $[T]$ in $\Out(F_N)$. There is a homomorphism \[  \Stab(T) \to \Aut(T/F_N) \] given by the left action of each outer automorphism on the $F_N$-orbits of edges and vertices in $T$. 
We call the kernel of this map $\Stab^0(T)$. (Here,
$T/F_N$ is the quotient graph, not the quotient graph of groups.)

We use $\ia$ to denote the kernel of the map \[\Out(F_N) \to \gl_N(\mathbb{Z}/3\mathbb{Z}) \] given by the action of $\Out(F_N)$ on $H_1(F_N,\mathbb{Z}/3\mathbb{Z})$. The analogous subgroup of the mapping class group consists of \emph{pure} mapping classes
(see Theorem 7.1.A of \cite{ivanov}) and behaves similarly; both groups are torsion-free and passing to them avoids a good deal of troublesome periodic behaviour. In this vein, we will require the following consequence of \cite[Theorem~3.1]{MR4089372}.

\begin{proposition}[\cite{HW2}, Lemma 2.6] \label{p:stab0}
Suppose that $G < \ia$. If the $G$-orbit of $T\in \FS$  is finite, then $G$ fixes $T$;
moreover $G < \Stab^0(T)$ and $G$ fixes every collapse of $T$.
\end{proposition}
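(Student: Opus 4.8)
The plan is to reduce the statement about the group $G$ to a statement about individual automorphisms and then to invoke Theorem~3.1 of \cite{MR4089372} as the essential input. Since the $G$-orbit of $T$ is finite, the induced homomorphism $\rho\colon G\to\sym(G\cdot T)$ has finite image; in particular, for each $\Phi\in G$ the cyclic subgroup $\langle\Phi\rangle$ has finite orbit, so $\Phi^m$ fixes $[T]$ for some $m\ge 1$. Hence it suffices to prove the following element-wise assertion: if $\Phi\in\ia$ has a finite $\langle\Phi\rangle$-orbit in $\FS$, then $\Phi$ fixes $T$ and lies in $\Stab^0(T)$. Granting this for every $\Phi\in G$ shows that $G<\Stab^0(T)$, and in particular that $G$ fixes $T$.

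I would establish the element-wise assertion in two parts. The milder part is that any $\Phi\in\ia$ which fixes $T$ necessarily lies in $\Stab^0(T)$, i.e.\ acts trivially on the quotient graph $T/F_N$: a nontrivial element of the image of $\Stab(T)\to\Aut(T/F_N)$ would act nontrivially on $H_1(F_N;\Z/3\Z)$, contradicting $\Phi\in\ia$. The mod-$3$ coefficients are what make this detection work, the delicate case being a graph automorphism that merely negates a homology class (say, by interchanging two edges between the same pair of vertices): such a change is invisible over $\Z/2\Z$ but not over $\Z/3\Z$, since $-1\neq 1$ there. The harder part, which I expect to be the main obstacle, is the passage from ``some power of $\Phi$ fixes $T$'' to ``$\Phi$ fixes $T$''; this is not formal, because $\Phi$ may well have infinite order while cyclically permuting the distinct splittings $T,\Phi T,\dots,\Phi^{d-1}T$. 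Ruling this out is precisely a fixed-point phenomenon for the action of the torsion-free group $\ia$ on free splittings, and it is here that I would invoke Theorem~3.1 of \cite{MR4089372}: it guarantees that a finite orbit yields an honest fixed splitting rather than a merely periodic one. Combined with the first part, this gives $\Phi\in\Stab^0(T)$ and $d=1$.

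Finally, the assertion about collapses is formal once $G<\Stab^0(T)$. Since $G$ acts trivially on $T/F_N$, it fixes every vertex- and edge-orbit of the quotient graph. Any collapse $T'$ of $T$ is obtained by equivariantly collapsing an $F_N$-invariant subforest of $T$, i.e.\ by collapsing some set of edge-orbits of $T/F_N$; this set is automatically $G$-invariant, so the collapse is $G$-equivariant and $G$ fixes $T'$. Applying the same reasoning with $T'$ in place of $T$ shows that $G<\Stab^0(T')$ as well, completing the proof.
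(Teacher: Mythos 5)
The paper gives no internal proof of this proposition: it is imported verbatim from \cite{HW2} (Lemma~2.6), flagged as a consequence of \cite[Theorem~3.1]{MR4089372}, so there is no argument of the paper's own to compare yours against. Your reconstruction is consistent with that citation chain and is correct in outline. The reduction to cyclic subgroups is valid (though optional: the cited theorem, as packaged in \cite{HW2}, applies to subgroups directly, so one can feed $G$ into it wholesale); the crucial passage from ``$\Phi$-periodic'' to ``$\Phi$-fixed'' is exactly the content you are entitled to extract from the cited theorem, and you invoke it at precisely the point where the paper's source does; and the deduction that $G<\Stab^0(T)$ implies $G$ fixes every collapse is the standard formal argument and is fine as written.

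The one step in your write-up that carries real mathematical content beyond the citation is the claim that an element of $\Stab(T)\cap\ia$ inducing a nontrivial automorphism of $T/F_N$ must act nontrivially on $H_1(F_N;\Z/3\Z)$. This is true, and your mod-$2$ versus mod-$3$ remark identifies the right phenomenon, but it is a lemma, not an observation. A proof needs the block decomposition $H_1(F_N;\Z/3\Z)\cong\bigl(\bigoplus_v H_1(G_v;\Z/3\Z)\bigr)\oplus H_1(T/F_N;\Z/3\Z)$ coming from the free-product structure of the splitting; triviality on the vertex blocks pins down every vertex with nontrivial group (hence every vertex of valence at most two, by the paper's conventions on quotient graphs), and one must then rule out a nontrivial graph automorphism fixing those vertices and acting trivially on $H_1$ of the graph --- for instance via Minkowski's theorem that the kernel of $\gl_n(\Z)\to\gl_n(\Z/3\Z)$ is torsion-free, followed by an Euler-characteristic count showing the fixed subgraph must be everything. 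As a blind reconstruction of a cited result this is acceptable, but you should either prove this detection lemma or cite it explicitly; it is the place where the hypothesis $G<\ia$, rather than membership in a mod-$2$ congruence subgroup, is genuinely used.
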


If $T$ is a free splitting with one $F_N$-orbit of edges, we say that $T$ is a {\em one-edge splitting}. A one-edge splitting is \emph{nonseparating} if the quotient graph $T/F_N$ is a loop, and \emph{separating} otherwise.  The link
between free splittings and our study of direct products of free groups   
is the following extract from \cite[Theorem 6.1]{HW2}. The original statement of this theorem involves passing to a subgroup of finite index in $G$, but Proposition~\ref{p:stab0} shows that this is not necessary when one adds the assumption that $G$ is contained in $\ia$.

\begin{theorem}[\cite{HW2}, Theorem~6.1] \label{t:preserved_splitting}
If $G < \ia$ is a direct product of $2N-4$ nonabelian free groups, then $G$ fixes a one-edge nonseparating free splitting of $F_N$. 
\end{theorem}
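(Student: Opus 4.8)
The plan is to exploit the action of $\Out(F_N)$ on the Gromov hyperbolic free factor complex $\mathcal{FF}_N$, for which the loxodromic elements are precisely the fully irreducible automorphisms and the centralizer in $\Out(F_N)$ of any such element is virtually cyclic. Write $G = G_1 \times \cdots \times G_{2N-4}$ with each $G_i$ nonabelian free. The first step is to rule out fully irreducible elements inside $G$. Suppose $\phi = (\phi_1, \ldots, \phi_{2N-4}) \in G$ is fully irreducible. Since $G$ is a direct product, $Z_G(\phi) = \prod_i Z_{G_i}(\phi_i)$. No $\phi_i$ can be trivial, for otherwise $G_i \le Z_G(\phi) \le Z_{\Out(F_N)}(\phi)$, contradicting that the latter is virtually cyclic while $G_i$ is nonabelian free. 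Centralizers of nontrivial elements in free groups are infinite cyclic, so $Z_G(\phi) \cong \Z^{2N-4}$, which is not virtually cyclic once $2N-4 \ge 2$, i.e. for $N \ge 3$. This contradiction shows that $G$ contains no fully irreducible element.

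The second step is to invoke the subgroup dichotomy of Handel--Mosher (in the relative form available through Horbez's work): a subgroup of $\Out(F_N)$ that contains no fully irreducible element preserves the conjugacy class of a proper free factor system. A priori this holds only after passing to a finite-index subgroup, but the hypothesis $G < \ia$ lets me use Proposition~\ref{p:stab0} to remove that loss: the $G$-orbit of the associated free splitting is finite, so $G$ genuinely fixes it and in fact lies in $\Stab^0$. This produces a $G$-invariant free splitting with proper vertex stabilizers, though not yet a one-edge nonseparating one.

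The third step is an induction that sharpens the splitting, combining the relative versions of the machinery above with the product-rank bookkeeping of Lemma~\ref{l:product_rank_exact_sequences} and the factor-isolation of Lemma~\ref{l:free_subgroup}. Passing to a suitable $G$-invariant splitting $T$, each vertex stabilizer acts through its relative automorphism group, and the exact sequence in Lemma~\ref{l:product_rank_exact_sequences} distributes the $2N-4$ factors of $G$ between the relative outer automorphism groups of the vertex groups and the edge-twisting directions. The decisive point is that the arithmetic is tight: for a one-edge nonseparating splitting $F_N = F_{N-1}\,\ast$ the twist subgroup contributes product rank $2$ (the left and right transvections give two copies of the vertex group $F_{N-1}$), while $\Out(F_{N-1})$ contributes $2(N-1)-4 = 2N-6$, summing to exactly $2N-4$. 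By contrast, a separating one-edge splitting $F_N = A \ast B$ forces $\pr(G) \le \pr(\Out(A)) + \pr(\Out(B))$ plus a negligible twist contribution, which is strictly smaller than $2N-4$; likewise splittings with more edges, or with a vertex group of smaller corank, lose product rank. Since $\pr(G) = 2N-4$ is maximal, the only configuration that survives is a single loop with vertex group of rank $N-1$, i.e. a one-edge nonseparating free splitting.

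The main obstacle is the third step: making the product-rank accounting precise in the relative setting so as to eliminate every invariant splitting except the one-edge nonseparating one. This requires that the relevant relative free factor complexes be hyperbolic with the same loxodromic/centralizer dichotomy, that the structure of the edge-twist and vertex-relative subgroups be understood well enough for their product ranks to be computed exactly, and that the subadditivity of Lemma~\ref{l:product_rank_exact_sequences} be shown to be sharp precisely in the target case and strict in all others. The delicacy is that $2N-4$ is the maximal attainable product rank, so there is no slack whatsoever: the very tightness that makes the count balance for a nonseparating loop is what must be established to fail strictly for every competing graph of groups.
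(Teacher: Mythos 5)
You should first note what you are being compared against: the paper does \emph{not} prove this statement. It is imported verbatim from \cite{HW2} (Theorem~6.1), and the only new content in the paper is the observation that Proposition~\ref{p:stab0} upgrades ``virtually fixes'' to ``fixes'' once $G < \ia$. So your proposal is really measured against the proof in \cite{HW2}, and your outline does follow that proof's strategy (it is the route the introduction attributes to \cite{HW2}: actions on relative free factor complexes). Your Step 1 is correct as stated. Step 2 is right in outline but misapplies Proposition~\ref{p:stab0}: the Handel--Mosher/Horbez alternative produces an invariant conjugacy class of a proper free factor (or free factor system), and a free factor system does \emph{not} come with any ``associated free splitting'' --- the phrase has no meaning at that stage --- so what you need is the analogous finite-orbit-implies-fixed statement for free factor systems (which exists in the literature behind \cite{HW2}, Lemma~2.6, but is not the statement of Proposition~\ref{p:stab0}).

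The genuine gap is Step 3, and it is not a matter of ``making the accounting precise.'' Your argument there opens with ``passing to a suitable $G$-invariant splitting $T$,'' but at that point no invariant splitting has been produced: you only have an invariant free factor system, and its stabilizer in $\Out(F_N)$ is a priori larger than the stabilizer of any splitting compatible with it. The product-rank arithmetic you describe (which is essentially correct, and is what Lemma~\ref{l:counting} does in the paper) can discriminate among candidate splittings once one is fixed; it cannot manufacture a fixed splitting out of a fixed free factor system. Bridging that gap is precisely the content of \cite{HW2}, Theorem~6.1: one iterates the alternative in the relative setting to enlarge the invariant free factor system until $(F_N,\mathcal{F})$ is sporadic --- which requires hyperbolicity of the relative free factor graphs and the Guirardel--Horbez theorem that loxodromic elements there (relative fully irreducibles) have virtually cyclic centralizers, i.e.\ the relative analogue of your Step 1 --- and then runs the sporadic endgame: if the maximal system is $\{[A],[B]\}$ with $F_N = A \ast B$ there is a canonical separating splitting, which your rank count rules out; if it is $\{[A]\}$ with $A$ of corank one, one still needs that preserving $[A]$ forces preserving a one-edge nonseparating splitting, which holds because the loop splitting is the unique reduced tree in its deformation space --- a fact requiring proof, not bookkeeping. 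All of this is exactly what you defer as ``the main obstacle,'' so in its current form the proposal assumes the substance of the theorem it sets out to prove.
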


\subsection{Automorphic lifts}\label{s:automorphic_lifts}

Subgroups that stabilize free splittings in $\Out(F_N)$ have the striking feature that they virtually lift to $\Aut(F_N)$ (see, for instance \cite{HM4}). To describe this lifting, we need some further notation. Given $G < \Out(F_N)$, we let $\tilde G$ denote the preimage of $G$ in $\Aut(F_N)$. We view $F_N$ as a subgroup of $\tilde G$ via the identification $g \mapsto \ad_g$ of $F_N$ with the inner automorphisms. The following lemma is well known (see, for example, Lemma~6.7 of \cite{BGH}).

\begin{lemma} \label{l:lifting_condition}
Let $T$ be a splitting of $F_N$ such that the action of $F_N$ on $T$ is irreducible. A subgroup $G < \Out(F_N)$ fixes $T$ if and only if the action of $F_N$ on $T$ extends to an action of $\tilde G$ on $T$.
\end{lemma}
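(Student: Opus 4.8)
The plan is to prove Lemma~\ref{l:lifting_condition} by unwinding the twisted-equivariance criterion already recorded just before Proposition~\ref{p:stab0}. Recall that $[T]$ is fixed by $\phi \in \Aut(F_N)$ precisely when there is a homeomorphism $f_\phi \colon T \to T$ satisfying $f_\phi(gx) = \phi(g) f_\phi(x)$ for all $g \in F_N$ and $x \in T$, and that irreducibility of the $F_N$-action guarantees this $f_\phi$ is \emph{unique} when it exists. This uniqueness is the engine of the whole argument: it is what promotes the set-level assignment $\phi \mapsto f_\phi$ into a genuine homomorphism.

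\medskip

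\noindent\textbf{The forward direction.} Suppose $G$ fixes $[T]$, and set $\tg$ to be the preimage of $G$ in $\Aut(F_N)$. Every $\phi \in \tg$ projects to an element of $G$ and hence fixes $[T]$, so by the criterion above there is a unique twisted-equivariant homeomorphism $f_\phi$. I would \emph{define} the action of $\tg$ on $T$ by $\phi \cdot x := f_\phi(x)$. The key point is to check this is a homomorphism. Given $\phi, \psi \in \tg$, both $f_\phi \circ f_\psi$ and $f_{\phi\psi}$ are homeomorphisms of $T$, and a one-line computation shows each is $(\phi\psi)$-twistedly equivariant:
\[
f_\phi(f_\psi(gx)) = f_\phi(\psi(g) f_\psi(x)) = \phi(\psi(g)) f_\phi(f_\psi(x)) = (\phi\psi)(g)\, f_\phi(f_\psi(x)).
\]
By the uniqueness clause, $f_\phi \circ f_\psi = f_{\phi\psi}$, so $\phi \mapsto f_\phi$ is a homomorphism $\tg \to \Isom(T)$. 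Finally I must verify this extends the given $F_N$-action: for $\phi = \ad_g$ inner, the recorded formula $f_\phi(x) = gx$ shows $f_{\ad_g}$ is exactly the original action of $g$, so the restriction to $F_N \le \tg$ recovers it.

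\medskip

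\noindent\textbf{The converse.} Suppose instead that the $F_N$-action on $T$ extends to an action of $\tg$. For each $\phi \in \tg$, let $f_\phi$ be the homeomorphism $x \mapsto \phi \cdot x$ given by this action. Because the action extends that of $F_N$ and $F_N$ is normal in $\tg$ with $\phi \ad_g \phi^{-1} = \ad_{\phi(g)}$, the equivariance relation $f_\phi(gx) = \phi(g) f_\phi(x)$ drops out directly from the action axioms: $\phi \cdot (g \cdot x) = (\phi g \phi^{-1}) \cdot (\phi \cdot x) = \phi(g) \cdot f_\phi(x)$. Thus each $\phi \in \tg$ is twisted-equivariant, so it fixes $[T]$ in $\Aut(F_N)$; passing to the quotient, $G$ fixes $[T]$ in $\Out(F_N)$.

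\medskip

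\noindent The main obstacle is purely bookkeeping rather than conceptual: one must be careful that the twisted-equivariance convention matches the \emph{right} action of $\Aut(F_N)$ on $\FS$ (so that the composition order in $f_\phi \circ f_\psi = f_{\phi\psi}$ comes out correctly, and not reversed). Everything else is a formal consequence of uniqueness, which is supplied by irreducibility. I expect the proof to be short; indeed, since the lemma is cited as well known (cf. \cite{BGH}), the only real content is assembling the twisted-equivariant maps into a homomorphism and confirming the inner-automorphism normalization.
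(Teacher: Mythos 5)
Your proof is correct and follows essentially the same route as the paper's: both directions rest on the uniqueness of the twisted-equivariant map $f_\phi$ (supplied by irreducibility), with the computation $f_\phi f_\psi(gx) = \phi\psi(g)\, f_\phi f_\psi(x)$ promoting $\phi \mapsto f_\phi$ to a homomorphism in one direction, and the conjugation relation $\phi\, \ad_g\, \phi^{-1} = \ad_{\phi(g)}$ giving twisted equivariance of the extended action in the other. If anything, you spell out two details the paper leaves implicit (the inner-automorphism normalization $f_{\ad_g}(x)=gx$ and the verification that $\tilde f(\phi)$ is twisted-equivariant), which is fine.
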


\begin{proof}
As in Section~\ref{s:splittings_background}, $G$ fixes $T$ if and only if for each $\phi \in \tilde G$ there exists a unique 
isomorphism $f_\phi :T \to T$ such that $f_\phi(gx)=\phi(g)f_\phi(x)$. If the action $f:F_N \to \Isom(T)$ extends to an action $\tilde f: \tilde G \to \Isom(T)$ then $f_\phi:=\tilde f(\phi)$ suffices. Conversely, if $G$ fixes $T$ and $\phi, \psi \in \tilde G$ then the isomorphisms $f_\phi$ and $f_\psi$ satisfy  \begin{align*} f_\phi f_\psi(gx)&=f_\phi(\psi(g)f_\psi(x)) \\ &=\phi\psi(g)f_\phi f_\psi(x), \end{align*} so by uniqueness $f_\phi f_\psi = f_{\phi\psi}$, and $\phi \mapsto f_\phi$ gives the required action. 
\end{proof}

Understanding this extended action allows one to construct automorphic lifts.

\begin{proposition}[Existence of automorphic lifts]\label{p:automorphic-lift}
Let $G$ be a subgroup of $\ia$ that fixes a free splitting $T$ and let  $\tilde G<\aut(F_N)$ be its preimage, which acts on $T$.
For each edge $e$ in $T$, the stabilizer $\tilde G_e < \tilde G$ of $e$ is isomorphic to $G$. The isomorphism is given by the restriction 
to $\tilde G_e$ of the quotient map $\pi: \Aut(F_N) \to \Out(F_N)$.
\end{proposition}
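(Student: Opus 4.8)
The plan is to exploit the short exact sequence $1\to F_N\to\tilde G\xrightarrow{\pi}G\to 1$, in which $F_N=\Inn(F_N)=\ker\pi$, together with the action of $\tilde G$ on $T$ furnished by Lemma~\ref{l:lifting_condition}. Recall from that lemma that each $\phi\in\tilde G$ acts through the $\phi$-twistedly equivariant homeomorphism $f_\phi$, that $\phi\mapsto f_\phi$ is a homomorphism, and that an inner automorphism $\ad_g$ acts by $f_{\ad_g}(x)=g\cdot x$. Fixing an edge $e$, I would show separately that the restriction $\pi|_{\tilde G_e}\colon\tilde G_e\to G$ is injective and surjective.

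Injectivity is immediate: the kernel of $\pi|_{\tilde G_e}$ is $\tilde G_e\cap\ker\pi=\tilde G_e\cap F_N$, which is precisely the stabilizer of $e$ under the $F_N$-action; since $T$ is a free splitting this stabilizer is trivial. For surjectivity, the key preliminary observation is that $G<\Stab^0(T)$: because $G$ fixes $T$, its $G$-orbit is a single point, so Proposition~\ref{p:stab0} applies (as $G<\ia$) and places $G$ in $\Stab^0(T)$. Concretely this says that for any $\bar\phi\in G$ and any lift $\phi_0\in\tilde G$, the homeomorphism $f_{\phi_0}$ preserves each $F_N$-orbit of edges; in particular $f_{\phi_0}(e)=h\cdot e$ for some $h\in F_N$. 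I would then correct $\phi_0$ by an inner automorphism. Using twisted equivariance, $f_{\phi_0\ad_g}(e)=f_{\phi_0}(g\cdot e)=\phi_0(g)\cdot f_{\phi_0}(e)=(\phi_0(g)\,h)\cdot e$, and choosing $g=\phi_0^{-1}(h^{-1})$ makes $\phi_0(g)h=1$, so $f_{\phi_0\ad_g}(e)=e$. Since $\ad_g\in\ker\pi$, the corrected lift $\phi_0\ad_g$ still maps to $\bar\phi$ under $\pi$, giving the desired element of $\tilde G_e$.

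I do not expect a substantial obstacle here: once the action of $\tilde G$ on $T$ is in hand, the proposition is essentially a diagram chase. The only delicate point is surjectivity, and its crux is the reduction to $\Stab^0(T)$ via Proposition~\ref{p:stab0}. This is what guarantees that $e$ and its image $f_{\phi_0}(e)$ lie in the same $F_N$-orbit; without the hypothesis $G<\ia$ one could at best fix $e$ after passing to a finite-index subgroup, whereas here the orbit is genuinely fixed and the explicit inner correction $\ad_{\phi_0^{-1}(h^{-1})}$ lands the lift exactly on $e$.
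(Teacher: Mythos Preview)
Your proposal is correct and follows essentially the same approach as the paper: injectivity from triviality of $F_N$-edge stabilizers, and surjectivity by invoking Proposition~\ref{p:stab0} to get $G<\Stab^0(T)$ and then correcting an arbitrary lift by an inner automorphism. The only cosmetic difference is that the paper multiplies by the inner correction on the left ($\ad_g^{-1}\phi$ with $f_\phi(e)=ge$) rather than on the right as you do.
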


\begin{proof}
As $T$ is a free splitting, no inner automorphism fixes $e$ and the map $\tilde G_e \to G$ is injective. 
To see that it is surjective, suppose $\Phi \in G$ and let $\phi \in \Phi$ be a representative of $\Phi$. 
Proposition~\ref{p:stab0} tells us that $G<\Stab^0(T)$ and hence  $\tilde G$ preserves the $F_N$-orbits of edges in $T$. 
Therefore $f_\phi (e)=ge$ for some $g \in F_N$ and \[ f_{\ad_g^{-1}\phi}(e)=f_{\ad_g^{-1}}f_\phi(e)=g^{-1}\cdot ge=e. \] It follows that $\ad_g^{-1}\phi$ is a representative of $\Phi$ in $\tilde G_{ e}$.
\end{proof}

\section{Direct products of free groups in $M_k(F_2)$ with maximal product rank} \label{s:slim_insert}

 We consider semidirect products of the form $M_k(\LL)=(\LL\times\dots\times \LL)\rtimes {\rm{Aut}}(\LL)$, where there are
 $k$ copies of the arbitrary group $\LL$ and the action  of ${\rm{Aut}}(\LL)$ is diagonal.  Writing elements of  $M_k(\LL)$
in the form $(g_1,\dots,g_k ; \phi)$, the group operation is 
$$(g_1,\dots,g_k ; \phi).(h_1,\dots,h_k ; \psi) = (g_1\phi(h_1),\dots,g_k\phi(h_k) ; \phi\circ\psi).$$
These groups arise naturally in many settings. For example, if $X=K(\LL,1)$ then the group of homotopy classes of 
homotopy equivalences $X\to X$ fixing $(k+1)$ marked points is isomorphic to $M_k(\LL)$. When
$\LL$ is free, these groups play an important role in the study of graph cohomology \cite{CHKV} and homology stability results for automorphism groups of free groups \cite{HV2}. We shall be concerned almost entirely with the case $\LL=F_2$.

Our interest in $M_k(F_2)$  
stems from the fact that if $T$ is the Bass--Serre tree of a collapsed rose with $N-2$ petals in the boundary of Outer space then, 
as we will see later, $\stab^0(T) \cong M_{2N-5}(F_2)$. Our purpose in this
section is to give a detailed description of the ways in which a direct product of $(k+1)$
nonabelian free groups can be embedded in $M_k(F_2)$. 

\subsection{Generalities} \label{s:generalities}

We distinguish between the $k$ visible copies of $\LL$ in $M_k(\LL)$ by writing
$M_k(\LL)=(\LL_1\times\dots\times \LL_k)\rtimes {\rm{Aut}}(\LL)$.
If $\LL$ has trivial centre, then $g\mapsto \ad_g$ defines an isomorphism from $\LL$ to the group of inner automorphisms
$\Inn(\LL)<{\rm{Aut}}(\LL)$. We claim that this gives rise to a natural embedding of the direct product   
$\LL^{k+1}\hookrightarrow M_k(\LL)$ with image
$$M^0_k(\LL) := (\LL_1\times\dots\times \LL_k)\rtimes \Inn(\LL),$$ where the last summand in $\LL^{k+1}$ maps
to  
$$J := \{(g^{-1},\dots,g^{-1}; \ad_g) \mid g\in \LL\} < M_k(\LL).$$
The existence of this embedding points to the fact $\LL_1\times\dots\times \LL_k<M_k(\LL)$  is not a characteristic subgroup, and the 
semidirect product decomposition defining $M_k(\LL)$ is less canonical than the short exact sequence
$$
1\to M^0_k(\LL) \to M_k(\LL) \to {\out}(\LL) \to 1.
$$

\begin{lemma}\label{l:extra-G} With the notation established above,
\begin{enumerate}
\item $(g_1,\dots,g_k, x) \mapsto (g_1x^{-1},\dots,g_kx^{-1}; \ad_x)$ defines an isomorphism $\LL^{k+1}\overset{\cong}\to M^0_k(\LL)$
with inverse $(g_1,\dots,g_k; \ad_x) \mapsto (g_1x,\dots,g_kx, x)$.
\item For $i=1,\dots,k$, there exists an involution $\alpha_i \in {\rm{Aut}}(M_k(\LL))$ that exchanges $\LL_i$
and $J$ while restricting to the identity on ${\rm{Aut}}(\LL)$ and $\LL_{\ell\neq i}$;
\item these $\alpha_i$ generate a copy of the symmetric group $\sym(k+1)\hookrightarrow {\rm{Aut}}(M_k(\LL))$, and
the embedding $\LL^{k+1}\to M^0_k(\LL)<M_k(\LL)$ is $\sym(k+1)$-equivariant.
\end{enumerate}
\end{lemma}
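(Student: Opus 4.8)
The plan is to handle the three parts in sequence, using part (1) as the dictionary between $A^{k+1}$ and $M^0_k(A)$ that powers the rest. For (1), I would write $\Theta(g_1,\dots,g_k,x)=(g_1x^{-1},\dots,g_kx^{-1};\ad_x)$ and check directly that it is a homomorphism: unwinding the multiplication rule, the $\ell$-th coordinate of $\Theta(\mathbf g,x)\,\Theta(\mathbf h,y)$ is $g_\ell x^{-1}\cdot\ad_x(h_\ell y^{-1})=g_\ell h_\ell(xy)^{-1}$, which matches $\Theta$ applied to the product in $A^{k+1}$. Surjectivity onto $M^0_k(A)$ and the stated inverse are then immediate, the only input being that trivial centre makes $x\mapsto\ad_x$ injective, so that $x$ is recovered from $\ad_x$. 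This part is routine.

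For (2), the conceptual observation is that transporting the coordinate transposition $(i,k+1)$ on $A^{k+1}$ through $\Theta$ already yields an automorphism of $M^0_k(A)$ swapping $A_i$ with $J$, fixing each $A_\ell$ ($\ell\neq i$), and fixing $\Inn(A)$ pointwise (the diagonal of $A^{k+1}$ maps to $\Inn(A)$ and is fixed by every permutation). The substance of (2) is to extend this to an automorphism of all of $M_k(A)$ fixing the whole of $\Aut(A)$ pointwise. I would do this by the explicit formula
\[ \alpha_i(g_1,\dots,g_k;\phi)=(h_1,\dots,h_k;\ad_{g_i}\circ\phi),\qquad h_i=g_i^{-1},\ \ h_\ell=g_\ell g_i^{-1}\ (\ell\neq i), \]
and then verify that $\alpha_i$ is a homomorphism and that $\alpha_i^2=\mathrm{id}$. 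The homomorphism check is the main calculation; it turns on the identity $\ad_{\phi(a)}\circ\phi=\phi\circ\ad_a$, which makes the twist introduced by $\phi$ cancel as required. Evaluating $\alpha_i$ on $A_\ell$, $\Aut(A)$, $A_i$ and $J$ then records the stated behaviour.

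The main obstacle lies in (3), where a naive approach fails. One must not simply verify Coxeter-type relations for the transpositions $(i,k+1)$: the star-shaped diagram with all bonds labelled $3$ presents a Coxeter group strictly larger than $\sym(k+1)$, so relation-checking alone yields only a surjection onto $\sym(k+1)$, not an isomorphism. Instead I would argue by restriction to $M^0_k(A)$. Conjugation by $\Theta$ turns the coordinate action of $\sym(k+1)$ on $A^{k+1}$ (faithful, as $A\neq 1$) into a faithful homomorphism $\Phi\colon\sym(k+1)\to\Aut(M^0_k(A))$, and part (2) identifies $\alpha_i|_{M^0_k(A)}$ with $\Phi((i,k+1))$. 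Since each $\alpha_i$ preserves $M^0_k(A)$, restriction gives a homomorphism $\rho\colon\langle\alpha_1,\dots,\alpha_k\rangle\to\Aut(M^0_k(A))$ whose image is $\Phi(\sym(k+1))\cong\sym(k+1)$ (the $(i,k+1)$ generate $\sym(k+1)$). The crux is that $\rho$ is injective: each $\alpha_i$ fixes $\Aut(A)$ pointwise, hence so does every element of $\langle\alpha_i\rangle$, and since $M_k(A)$ is generated by $M^0_k(A)$ together with $\Aut(A)$, any such element restricting trivially to $M^0_k(A)$ is the identity. Thus $\rho$ is an isomorphism onto $\Phi(\sym(k+1))$, giving $\langle\alpha_i\rangle\cong\sym(k+1)$ with $\alpha_i\leftrightarrow(i,k+1)$, the required embedding. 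Equivariance then falls out: writing $\beta_\sigma$ for the element corresponding to $\sigma$, one has $\rho(\beta_\sigma)=\Phi(\sigma)$, so $\beta_\sigma(\Theta\mathbf g)=\Theta(P_\sigma\mathbf g)$ for all $\mathbf g\in A^{k+1}$, where $P_\sigma$ is coordinate permutation.
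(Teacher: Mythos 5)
Your proposal is correct and follows the same route as the paper: the explicit formula you give for $\alpha_i$ is exactly the one the paper writes down, and the paper's entire proof of (2) and (3) consists of that formula together with the assertion that ``one verifies'' it has the desired properties. Your more careful handling of part (3) --- replacing naive relation-checking (which, as you rightly observe, cannot by itself pin down $\sym(k+1)$, since the corresponding Coxeter presentation defines a strictly larger group once $k\ge 3$) by the faithful restriction to $M^0_k(\LL)\cong\LL^{k+1}$, combined with the fact that $M^0_k(\LL)$ and $\Aut(\LL)$ together generate $M_k(\LL)$ --- is a sound completion of the verification that the paper leaves to the reader.
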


\begin{proof} For (1):
it is clear that these maps are mutually inverse and a straightforward calculation establishes that they are homomorphisms. 

To prove (2) and (3), one verifies that the formula
$$
\alpha_i : (g_1,\dots,g_k; \phi) \mapsto (g_1g_i^{-1},\dots,g_{i-1}g_i^{-1},\, g_i^{-1},\, g_{i+1}g_i^{-1},\dots,g_kg_i^{-1}; \ad_{g_i}\phi)
$$
defines an automorphism with the desired properties.
\end{proof}

\subsection{Fixed subgroups, centralisers, and Nielsen transformations}

We remind the reader that an automorphism $\tau \in \aut(F_2)$ is called a \emph{Nielsen transformation} if there is
a basis $\{x_1, x_2\}$ of $F_2$ such that \begin{equation}\label{e:Nielsen} \tau(x_1)=x_1x_2 \text{ and } \tau(x_2)=x_2. \end{equation}
Any two Nielsen transformations are conjugate in $\aut(F_2)$ and there are various ways to distinguish them from other automorphisms.  In $\out(F_2)\cong {\rm{GL}}(2,\Z)$, an automorphism represents a power of a Nielsen transformation if and only if its associated matrix has trace $2$ and determinant $1$. 

\begin{lemma}\label{l:allN}
If a subgroup $H<\aut(F_2)$ consists entirely of powers of Nielsen transformations, then it is cyclic.
\end{lemma}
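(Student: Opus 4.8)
The plan is to pass from $\aut(F_2)$ to $\out(F_2)\cong\gl(2,\Z)$ and exploit the characterisation recalled just above: an outer automorphism represents a power of a Nielsen transformation precisely when its matrix is unipotent, i.e.\ has trace $2$ and determinant $1$. First I would check that $H$ embeds in $\out(F_2)$. The kernel of $\aut(F_2)\to\out(F_2)$ is $\Inn(F_2)$, and the image in $\gl(2,\Z)$ of a nonzero power $\tau^n$ of a Nielsen transformation is a \emph{nontrivial} unipotent matrix (conjugate to $\begin{pmatrix} 1 & 0 \\ n & 1\end{pmatrix}$), which is never the identity; hence the only inner element of $H$ is the identity and $H\cap\Inn(F_2)=1$. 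Writing $\bar H$ for the image, it therefore suffices to prove that a subgroup $\bar H<\gl(2,\Z)$ all of whose elements are unipotent is cyclic.

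The key step is to show that all nontrivial elements of $\bar H$ share a single fixed line in $\mathbb{Q}^2$. Each nontrivial unipotent $U$ has $U-I$ of rank one, so it fixes a \emph{unique} rational line $\ell_U$ (its $1$-eigenspace). If two nontrivial $A,B\in\bar H$ had distinct fixed lines, I would pick a rational basis $(v_A,v_B)$ with $v_A$ spanning $\ell_A$ and $v_B$ spanning $\ell_B$; in this basis $A=\begin{pmatrix} 1 & a \\ 0 & 1\end{pmatrix}$ and $B=\begin{pmatrix} 1 & 0 \\ b & 1\end{pmatrix}$ with $a,b\neq 0$, so $\operatorname{tr}(AB)=2+ab\neq 2$. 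Since trace is a conjugacy invariant, $AB$ is not unipotent, contradicting $AB\in\bar H$. As each nontrivial element has a unique fixed line, this pairwise coincidence forces a single common line $\ell$ for all of $\bar H$.

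Finally I would use integrality to promote ``common fixed line'' to cyclicity. Let $v$ be a primitive integer vector spanning $\ell\cap\Z^2$; since $v$ is primitive it extends to a $\Z$-basis of $\Z^2$, and conjugating by the corresponding element of $\gl(2,\Z)$ carries $\bar H$ into the set of integer matrices fixing $e_1$, of determinant $1$ and trace $2$, namely $\{\begin{pmatrix} 1 & n \\ 0 & 1\end{pmatrix} : n\in\Z\}\cong(\Z,+)$. Every subgroup of $\Z$ is cyclic, so $\bar H$, and hence $H$, is cyclic.

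I expect the main subtlety to lie in this last step rather than in the trace computation. A common fixed line yields, a priori, only an \emph{abelian} group of unipotents, and over $\mathbb{Q}$ such a group (for instance a subgroup of $(\mathbb{Q},+)$ like $\Z[1/p]$) need not be cyclic. It is precisely the integrality $\bar H<\gl(2,\Z)$, via the extension of a primitive vector to an integral basis, that confines $\bar H$ to $(\Z,+)$ and delivers cyclicity with no finite-generation hypothesis on $H$.
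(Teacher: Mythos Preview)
Your proof is correct and follows essentially the same approach as the paper: pass to $\gl(2,\Z)$, use the trace-$2$, determinant-$1$ characterisation, and compute the trace of a product to force all elements into a single upper-triangular unipotent subgroup isomorphic to $\Z$. The only cosmetic difference is the ordering of steps: the paper first conjugates one element into the form $E_n$ over $\Z$ (implicitly using the primitive-vector argument you spell out at the end) and then runs the trace computation, whereas you run the trace computation over $\mathbb{Q}$ first and defer integrality to the end.
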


\begin{proof}  
$H$ intersects  $\Inn(F_2)$ trivially and therefore injects into $\out(F_2)\cong {\rm{GL}}(2,\Z)$, where its image consists
entirely of elements that have trace $2$ and determinant $1$; let $M$ be such an element. We choose a basis so that 
the image of $H$ contains 
$
E_n= \begin{pmatrix}
1  & n \cr 0 & 1
\end{pmatrix}
$
for some non-zero integer $n$. If
$
M=
\begin{pmatrix}
a & b \cr c & d
\end{pmatrix},
$
 then $M E_n$  has trace $2$ only if $c=0$ and $a=d=1$. Hence the image of $H$ is contained in the cyclic subgroup of ${\rm{GL}}(2,\Z)$ generated by $E_1$.
\end{proof}

A characterisation of Nielsen
transformations that will be useful to us here concerns
the rank of fixed subgroups; this is due to Collins and Turner \cite{CT}.
Bestvina and Handel's solution to the Scott Conjecture \cite{BH} shows that the subgroup of   $\fix(\phi)<F_N$ 
fixed by an automorphism $\phi$
has rank at most $N$.  Theorem~A of \cite{CT} gives a complete description of the automorphisms with $\rk(\fix(\phi))=N$;
in the case $N=2$, these automorphisms are the powers of Nielsen transformations.
%($\fix(\tau)=\< \omega_1\omega_2\omega_1^{-1},\, \omega_2\>$.)

\begin{proposition}[\cite{CT}, Theorem~A] \label{p:CT} If $\phi\in \aut(F_2)$ fixes a non-cyclic subgroup of $F_2$, then $\phi$ is  a power of
a Nielsen transformation. If $\tau$ is the Nielsen automorphism given in Equation~\eqref{e:Nielsen} then the fixed subgroup of every nontrivial power of $\tau$ is $\langle x_1x_2x_1^{-1}, x_2 \rangle$.
\end{proposition}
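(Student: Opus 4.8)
The plan is to cite the two black-box results assembled in the paragraph immediately preceding the statement and then reduce everything to a concrete matrix computation in $\gl(2,\Z)$. The proposition has two parts: a characterisation of which automorphisms of $F_2$ have non-cyclic fixed subgroup, and an explicit computation of the fixed subgroup of a power of the standard Nielsen transformation $\tau$. For the first part, I would simply invoke the Bestvina--Handel bound $\rk(\fix(\phi))\le N=2$ together with Theorem~A of Collins--Turner \cite{CT}: the hypothesis that $\fix(\phi)$ is non-cyclic forces $\rk(\fix(\phi))=2$, which is the maximal possible rank, and \cite{CT} says precisely that the rank-$N$ case in $\aut(F_N)$, specialised to $N=2$, consists exactly of the powers of Nielsen transformations. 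So the real content of the first sentence is bookkeeping around the quoted theorem rather than a fresh argument.

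For the second sentence I would compute $\fix(\tau^n)$ directly. First I would pin down the action of $\tau$ on the generating set: from $\tau(x_1)=x_1x_2$ and $\tau(x_2)=x_2$ one checks by a short induction that $\tau^n(x_1)=x_1x_2^{\,n}$ and $\tau^n(x_2)=x_2$ for every integer $n$ (including negatives, via $\tau^{-1}(x_1)=x_1x_2^{-1}$). Then I would verify that the two proposed generators are fixed: $\tau^n(x_2)=x_2$ is immediate, and
\[
\tau^n(x_1x_2x_1^{-1}) = (x_1x_2^{\,n})\,x_2\,(x_1x_2^{\,n})^{-1} = x_1x_2^{\,n}x_2x_2^{-n}x_1^{-1} = x_1x_2x_1^{-1},
\]
so $\langle x_1x_2x_1^{-1},x_2\rangle \subseteq \fix(\tau^n)$ for $n\ne 0$. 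The subgroup $\langle x_1x_2x_1^{-1}, x_2\rangle$ is free of rank $2$ (its two generators are not proper powers of a common element and generate a non-cyclic group, so being a subgroup of $F_2$ it is free of rank exactly $2$).

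It remains to prove the reverse inclusion, and this is the step I expect to require the most care. Since $\fix(\tau^n)$ is a subgroup of $F_2$ containing the rank-$2$ free subgroup $\langle x_1x_2x_1^{-1}, x_2\rangle$, and since by Bestvina--Handel $\rk(\fix(\tau^n))\le 2$, it suffices to show that $\langle x_1x_2x_1^{-1}, x_2\rangle$ has finite index no greater than the ambient constraints permit, or better, to show it is exactly equal rather than merely finite-index. I would argue equality by showing the subgroup is its own normaliser-like closure under the constraint: any $w\in\fix(\tau^n)$ satisfies $w=\tau^n(w)$, and expanding $w$ as a reduced word in $x_1,x_2$ and applying the substitution $x_1\mapsto x_1x_2^{\,n}$ forces, after reduction, that every occurrence of $x_1^{\pm1}$ in $w$ must be immediately followed (respectively preceded) by a compensating $x_2$-power so that the net insertion cancels; tracking these cancellations shows $w$ lies in $\langle x_1x_2x_1^{-1},x_2\rangle$. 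Alternatively, and more cleanly, I would change basis by setting $y=x_1x_2x_1^{-1}$ so that $\{x_1,y\}$ is also a basis, rewrite $\tau^n$ in this basis, and observe that the fixed subgroup becomes visibly $\langle y, x_2\rangle$ with $x_2=x_1^{-1}yx_1$; the combinatorics of what a power-of-$x_2$ insertion can fix is then transparent. The main obstacle is thus not conceptual but ensuring the reduced-word cancellation argument for the reverse inclusion is airtight; the forward inclusion and the appeal to \cite{CT,BH} are routine.
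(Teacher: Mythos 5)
Your treatment of the first sentence coincides exactly with the paper's: the paper offers no proof of this proposition at all, but quotes Theorem~A of \cite{CT}, framed by the Bestvina--Handel bound $\rk(\fix(\phi))\le N$ from \cite{BH} --- precisely the two black boxes you invoke, with the same bookkeeping (a non-cyclic fixed subgroup forces $\rk(\fix(\phi))=2$, the maximal case, which \cite{CT} identifies with powers of Nielsen transformations). Where you genuinely differ is the second sentence: the paper takes the identification $\fix(\tau^n)=\langle x_1x_2x_1^{-1},x_2\rangle$ from \cite{CT} as well, whereas you re-derive it by hand, which makes the statement self-contained at the cost of length. Your forward inclusion is complete and correct. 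Your reverse inclusion is the right idea, but it needs one correction of emphasis and one completion to be airtight. The mechanism is \emph{not} that inserted $x_2$-powers cancel against letters already present in $w$; rather, write $w=x_2^{a_0}x_1^{\epsilon_1}x_2^{a_1}\cdots x_1^{\epsilon_k}x_2^{a_k}$ in reduced form with $\epsilon_i=\pm1$. The substitution $x_1\mapsto x_1x_2^n$, $x_1^{-1}\mapsto x_2^{-n}x_1^{-1}$ preserves the number of $x_1^{\pm1}$-letters before reduction, and free reduction can only decrease that number, so $\tau^n(w)=w$ forces no $x_1$-cancellation at all; the reduced form of $\tau^n(w)$ is then the same letter sequence with $a_0$ replaced by $a_0-n$ if $\epsilon_1=-1$, with $a_k$ replaced by $a_k+n$ if $\epsilon_k=+1$, and with each interior run $a_i$ shifted by $+n$ when $\epsilon_i=+1$ and by $-n$ when $\epsilon_{i+1}=-1$. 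Equality of reduced words therefore forces $\epsilon_1=+1$, $\epsilon_k=-1$, and, for each interior run, either ($\epsilon_i=+1$ and $\epsilon_{i+1}=-1$, the two insertions cancelling each other) or ($\epsilon_i=-1$ and $\epsilon_{i+1}=+1$, no insertions at all); so the signs alternate and $w$ is a product of $x_2$-powers and blocks $x_1x_2^{a}x_1^{-1}$, i.e.\ $w\in\langle x_1x_2x_1^{-1},x_2\rangle$. Finally, you should discard your ``cleaner'' alternative: in the basis $\{x_1,\,y\}$ with $y=x_1x_2x_1^{-1}$ one computes $\tau^n(x_1)=y^nx_1$ and $\tau^n(y)=y$, the mirror-image transvection, so that route presents the identical combinatorial problem rather than a transparent one. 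What your approach buys overall is an elementary, self-contained verification of the explicit fixed-subgroup claim; what the paper's buys is brevity, since \cite{CT} already contains both halves of the proposition.
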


\begin{corollary} \label{c:only-tau}
\begin{enumerate} 
\item If $\L< \Inn(F_2)$ is not cyclic, 
%If $\L<\aut(F_2)$ is a subgroup whose intersection with $\Inn(F_2)$ is not cyclic,
then its centraliser $C(\L)<\aut(F_2)$ is either trivial or else the cyclic subgroup generated by a Nielsen transformation.
\item $\rk_F(\aut(F_2))=1$.
\item Let $\phi,\psi\in\aut(F_2)$. If $\fix(\phi)\cap\fix(\psi)$ is not cyclic, then there is a unique Nielsen transformation $\tau^{\pm 1}$ such
that $\phi,\psi\in\<\tau\>$.
\item Let $\tau_0,\tau_1\in\aut(F_2)$ be Nielsen transformations. If $\<\tau_0\>\cap\<\tau_1\>\neq 1$, then $\tau_0=\tau_1^{\pm 1}$.
%\item Each Nielsen transformation $\tau$ is uniquely determined by $\fix(\tau) < \aut(F_2)$.
\end{enumerate}
\end{corollary}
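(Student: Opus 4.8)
The plan is to prove the four assertions of Corollary~\ref{c:only-tau} in sequence, deriving each from Proposition~\ref{p:CT} and the elementary matrix computation in Lemma~\ref{l:allN}.

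\textbf{Part (1).} Suppose $\phi\in C(\L)$ is nontrivial, where $\L<\Inn(F_2)$ is noncyclic. Identifying $\Inn(F_2)$ with $F_2$ via $g\mapsto\ad_g$, the noncyclic subgroup $\L$ corresponds to a noncyclic subgroup $H<F_2$. The point is that $\phi$ commuting with $\ad_h$ for all $h\in H$ forces $\phi$ to fix $H$: indeed $\phi\,\ad_h\,\phi^{-1}=\ad_{\phi(h)}$, so commuting with $\ad_h$ says $\ad_{\phi(h)}=\ad_h$, and since $F_2$ is centreless this gives $\phi(h)=h$. Hence $H\le\Fix(\phi)$, so $\Fix(\phi)$ is noncyclic, and Proposition~\ref{p:CT} tells us $\phi$ is a power of a Nielsen transformation $\tau$. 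To see $C(\L)$ is itself cyclic, note every nontrivial element of $C(\L)$ is such a power; I would then argue that all of these powers share a common Nielsen transformation (using that $\Fix(\tau)=\langle x_1x_2x_1^{-1},x_2\rangle$ contains $H$ and that $H$ determines $\tau$ up to the ambiguity handled in part (4)), so that $C(\L)$ consists entirely of powers of Nielsen transformations and Lemma~\ref{l:allN} applies to make it cyclic.

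\textbf{Part (2).} The product rank statement $\rk_F(\aut(F_2))=1$ follows quickly: $\aut(F_2)$ certainly contains a nonabelian free group (e.g.\ a copy of $F_2$ inside $\Inn(F_2)\cong F_2$), so $\pr\ge 1$. For the upper bound, a direct product $G_1\times G_2$ of two nonabelian free groups would give, picking nontrivial $\phi\in G_1$, a noncyclic subgroup $G_2$ in its centraliser; but $G_2<C(\phi)$ and by part (1) (applied after intersecting with $\Inn(F_2)$, or directly via Proposition~\ref{p:CT}) the centraliser of a nontrivial element is too small to contain a nonabelian free group. I would phrase this as: two commuting nonabelian free groups cannot coexist because the centraliser of any nontrivial automorphism is cyclic-by-(cyclic), hence virtually abelian, contradicting $\pr\ge 1$ inside it.

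\textbf{Parts (3) and (4).} For (3), if $\Fix(\phi)\cap\Fix(\psi)$ is noncyclic then each of $\Fix(\phi)$ and $\Fix(\psi)$ is noncyclic, so by Proposition~\ref{p:CT} both $\phi$ and $\psi$ are powers of Nielsen transformations, say $\phi\in\langle\tau_0\rangle$ and $\psi\in\langle\tau_1\rangle$. The Nielsen transformation $\tau_0$ is determined by its fixed subgroup $\Fix(\tau_0)$, which is a rank-$2$ subgroup containing $\Fix(\phi)$; since $\Fix(\phi)\cap\Fix(\psi)$ is noncyclic and sits inside both $\Fix(\tau_0)$ and $\Fix(\tau_1)$, I expect $\Fix(\tau_0)=\Fix(\tau_1)$, forcing $\tau_0=\tau_1^{\pm1}$ via part (4). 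Part (4) is the cleanest: if $\langle\tau_0\rangle\cap\langle\tau_1\rangle\ne 1$ then $\tau_0^m=\tau_1^n$ for some nonzero $m,n$; passing to $\out(F_2)\cong\gl(2,\Z)$, both are unipotent matrices fixing a common nonzero vector in their one-dimensional fixed space, and the matrix computation of Lemma~\ref{l:allN} shows two nontrivial unipotents with a common power must be powers of a single $E_1$, giving $\tau_0=\tau_1^{\pm1}$.

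The main obstacle I anticipate is pinning down the \emph{uniqueness} of the Nielsen transformation in parts (1) and (3): one must argue carefully that a noncyclic fixed subgroup determines the underlying Nielsen transformation up to inversion. This rests on showing that the fixed subgroup $\langle x_1x_2x_1^{-1},x_2\rangle$ recorded in Proposition~\ref{p:CT} determines the basis pair $\{x_1,x_2\}$ well enough to recover $\tau$, and on the compatibility check in part (4) that disjoint Nielsen transformations cannot share a nontrivial power. Everything else reduces to the centreless property of $F_2$ and the explicit $\gl(2,\Z)$ trace calculation already carried out in Lemma~\ref{l:allN}.
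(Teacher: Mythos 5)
Your part (1) follows the paper's argument: commuting with $\ad_h$ forces $\phi(h)=h$, so Proposition~\ref{p:CT} makes every nontrivial element of $C(\L)$ a power of a Nielsen transformation, and Lemma~\ref{l:allN} gives cyclicity (note that this lemma applies directly to a subgroup consisting of powers of possibly \emph{different} Nielsen transformations, so your extra step about finding a common $\tau$ is unnecessary; what you do omit is the paper's short final step, via $\fix(\tau)=\fix(\tau^p)$, showing the generator is a genuine Nielsen transformation). The genuine gap is in part (2). Your key claim --- that the centraliser of an arbitrary nontrivial automorphism is ``cyclic-by-(cyclic), hence virtually abelian'' and so too small to contain a nonabelian free group --- is false. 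If $\tau$ is a Nielsen transformation, then $\tau\ad_w\tau^{-1}=\ad_{\tau(w)}=\ad_w$ for every $w\in\fix(\tau)$, so $C(\tau)$ contains $\{\ad_w\mid w\in\fix(\tau)\}\cong F_2$, a nonabelian free group; similarly $C(\ad_{[x_1,x_2]})$, the stabiliser of the commutator word, contains nonabelian free subgroups. So picking an arbitrary nontrivial $\phi\in G_1$ and citing smallness of $C(\phi)$ cannot work. Part (1) only controls centralisers of \emph{noncyclic subgroups of} $\Inn(F_2)$, so your parenthetical ``applied after intersecting with $\Inn(F_2)$'' is the right instinct, but it requires the crucial missing step: proving that some factor $G_i$ meets $\Inn(F_2)$ in a noncyclic subgroup. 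The paper supplies this with a separate ingredient that your sketch lacks: $\out(F_2)$ is virtually free, so centralisers of nonabelian free subgroups of $\out(F_2)$ are finite; hence $G_1$ and $G_2$ cannot both inject into $\out(F_2)$, so some $G_i\cap\Inn(F_2)$ is nontrivial, and being normal in the nonabelian free group $G_i$ it is noncyclic; applying (1) to it, its centraliser contains the other factor, a contradiction.

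Parts (3) and (4) also have holes, stemming from running the logic in the opposite direction to the paper. In (3), the step ``I expect $\fix(\tau_0)=\fix(\tau_1)$'' is unproved, and the general principle it would rest on is false: two rank-two subgroups of $F_2$ can intersect in a noncyclic subgroup without coinciding (e.g.\ $\langle a,b\rangle$ and $\langle a,b^2\rangle$), so a special property of fixed subgroups is needed here --- which is essentially the statement being proved. In (4), your matrix argument takes place in $\out(F_2)$, so at best it yields $[\tau_0]=[\tau_1]^{\pm 1}$, i.e.\ $\tau_0=\ad_w\tau_1^{\pm 1}$ for some $w\in F_2$; the inner ambiguity still has to be eliminated, and the sketch does not do this (nor does Lemma~\ref{l:allN} apply as stated, since you do not know that the subgroup generated by the two unipotents consists of trace-$2$ elements). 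The paper's route avoids both problems: (3) follows from (1) applied to the noncyclic subgroup $\{\ad_x\mid x\in\fix(\phi)\cap\fix(\psi)\}<\Inn(F_2)$, which both $\phi$ and $\psi$ centralise, so both lie in its centraliser $\<\tau\>$; and (4) is then an immediate corollary of (3), since $\tau_0^m=\tau_1^n\neq 1$ forces $\fix(\tau_0)\cap\fix(\tau_1)=\fix(\tau_0)$ to be noncyclic, and the only Nielsen transformations in $\<\tau\>$ are $\tau^{\pm 1}$.
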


\begin{proof} 
This proof is a variation on the argument used by Gordon \cite[Theorem~3.2]{gordon} to prove that $\rk_F(\aut(F_2))=1$.
If $\phi\in\aut(F_2)$ centralizes $\Lambda$, then $\phi(w)=w$ for all $\ad_w\in\L$, 
so  
$\phi$ is a power of a Nielsen transformation. 
Thus $C(\L)$ consists entirely of powers of Nielsen 
transformations, so it is cyclic, by Lemma \ref{l:allN}. If $\tau$
is a Nielsen transformation then $\fix(\tau) = \fix(\tau^p)$ for $p\neq 0$, so $\tau^p\in C(\L)$ implies $\tau\in C(\L)$. 
This proves (1). 

Since $\out(F_2)$ is virtually free, the centraliser of any nonabelian free subgroup is finite. So if there were a copy of $F_2\times F_2$ in
$\aut(F_2)$ then at least one of the factors, say $F_2\times 1$, would have to intersect $\Inn(F_2)$ non-trivially. The intersection would be normal in $F_2\times 1$, hence non-cyclic. On the other hand, the centraliser of this intersection would contain $1\times F_2$, contradicting (1). 
 
To prove (3), we apply (1) to  $  \{\ad_x \mid x\in\fix(\phi)\cap \fix(\psi)\}$.  And (3) implies (4).
\end{proof}

\begin{lemma} \label{c:commuting-Nielsen}
Let $\tau_0,  \tau_1 \in \Aut(F_2)$ be Nielsen transformations. If some non-zero powers of $\tau_0$ and $\tau_1$ commute,
then $\tau_0$ and $\tau_1$ commute, and $\tau_1= \ad_w\tau_0^{\pm 1}$ for some $w\in \fix(\tau_0)$.
\end{lemma}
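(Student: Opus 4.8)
The plan is to determine $\tau_1$ up to an inner automorphism by passing to $\out(F_2)$, and then to prove that the inner factor lies in $\fix(\tau_0)$; this last point is the whole content of the lemma. In $\out(F_2)\cong\gl(2,\Z)$ the images $\bar\tau_0,\bar\tau_1$ are primitive unipotents (trace $2$, determinant $1$), and by hypothesis the \emph{nontrivial} unipotents $\bar\tau_0^{\,m}$ and $\bar\tau_1^{\,n}$ commute. Two nontrivial unipotents of $\gl(2,\Z)$ commute only when they fix a common line, and the unipotents fixing a given line form an infinite cyclic group with a unique primitive generator up to inverse; hence $\bar\tau_1=\bar\tau_0^{\,\epsilon}$ with $\epsilon=\pm1$. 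Since two automorphisms with the same image in $\out(F_2)$ differ by an inner automorphism, this gives $\tau_1=\ad_w\tau_0^{\,\epsilon}$ for some $w\in F_2$.

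Next I would reduce the statement to the single assertion $w\in\fix(\tau_0)$. Using $\tau_0\ad_w\tau_0^{-1}=\ad_{\tau_0(w)}$ one computes $\tau_0\tau_1\tau_0^{-1}=\ad_{\tau_0(w)}\tau_0^{\,\epsilon}$, so $\tau_0$ and $\tau_1$ commute precisely when $\tau_0(w)=w$; as both asserted conclusions are immediate once $w\in\fix(\tau_0)$, it suffices to prove $w\in\fix(\tau_0)$. The same calculation yields $\tau_1^{\,n}=\ad_{w_n}\tau_0^{\,n\epsilon}$ with $w_n=w\,\tau_0^{\epsilon}(w)\cdots\tau_0^{(n-1)\epsilon}(w)$, and shows that $\tau_0^{\,m}$ commutes with $\tau_1^{\,n}$ exactly when $\tau_0^{\,m}(w_n)=w_n$, i.e.\ (since $\fix(\tau_0^{\,m})=\fix(\tau_0)$) when $w_n\in\fix(\tau_0)$. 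Thus the lemma reduces, taking $\phi=\tau_0^{\epsilon}$, to the claim: for a Nielsen transformation $\phi$, if $w\,\phi(w)\cdots\phi^{n-1}(w)\in\fix(\phi)$ for some $n\ge1$, then $w\in\fix(\phi)$.

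To prove this claim I would take $\phi$ in standard form $\phi(x_1)=x_1x_2$, $\phi(x_2)=x_2$, so $\fix(\phi)=\langle x_2,x_1x_2x_1^{-1}\rangle$ by Proposition~\ref{p:CT}. The exponent-sum map $e\colon F_2\to\Z$, $x_1\mapsto1$, $x_2\mapsto0$, satisfies $e\circ\phi=e$ and vanishes on $\fix(\phi)$; since $e(w_n)=n\,e(w)$, the case $e(w)\ne0$ is immediate. The remaining case $w\in N:=\ker e$ is the genuinely nonabelian one. Here $N$ is free on $t_k:=x_1^kx_2x_1^{-k}$, one has $\fix(\phi)=\langle t_0,t_1\rangle$, and each $\phi(t_k)$ is a conjugate of $t_k$. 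Writing $K$ for the largest index occurring in the reduced form of $w$ (the representative case $K\ge2$; a negative least index is symmetric), every $\phi(t_l)$ with $l\le K$ has top index $<K$ except $\phi(t_K)=u_Kt_Ku_K^{-1}$, which contributes a single $t_K$ insulated by the index-$<K$ words $u_K^{\pm1}$. I would then argue that these top letters cannot all cancel during the free reduction of $w_n$, so that $K$ still occurs in $w_n$ and $w_n\notin\langle t_0,t_1\rangle$.

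The main obstacle is precisely this no-cancellation step. Abelian invariants cannot detect it: for instance $w=t_2t_0t_2^{-1}\notin\fix(\phi)$, yet the image of $w_n$ in $N^{\mathrm{ab}}$ lies in the image of $\fix(\phi)$, so one must genuinely control free reduction across all $n$ factors. A clean way to organise this is via the mapping torus $G=F_2\rtimes_{\tau_0}\Z$ with stable letter $s$: conjugation by $s$ and by $ws^{\epsilon}$ realises $\tau_0$ and $\tau_1$, the subgroup $F_2$ is self-centralising in $G$, and $C_G(s^m)=\fix(\tau_0)\times\langle s\rangle$; thus commuting of the powers is equivalent to $[(ws^{\epsilon})^n,s^m]=1$, and $w\in\fix(\tau_0)$ follows as soon as one knows this centraliser is root-closed. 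That in turn follows from unique roots in $G$, which hold because the Dehn-twist automorphism $\tau_0$ acts unipotently on the lower-central quotients of $F_2$ and hence preserves a bi-ordering, so that $G$ is bi-orderable. Whichever route one takes, the crux is the same nonabelian cancellation phenomenon, and that is where the real work lies.
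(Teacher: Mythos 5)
Your proposal is correct, but only because of the mapping-torus argument in your final paragraph, and that argument is genuinely different from the paper's proof. The paper reverses your order of operations: it first upgrades ``some powers commute'' to ``$\tau_0$ and $\tau_1$ commute'', and only then identifies the form of $\tau_1$. The upgrade is a two-line application of Corollary~\ref{c:only-tau}(4): the conjugate $\sigma=\tau_0^{-p}\tau_1\tau_0^{p}$ is again a Nielsen transformation with $\sigma^q=\tau_1^q$, so $\langle\sigma\rangle\cap\langle\tau_1\rangle\neq 1$ forces $\sigma=\tau_1^{\pm1}$, and the sign must be $+$ since Nielsen transformations have infinite order; a symmetric argument then gives $\tau_1^{-1}\tau_0\tau_1=\tau_0$. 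With commutativity in hand, the paper runs the same $\gl(2,\Z)$ eigenline argument as you to get $\tau_1=\ad_w\tau_0^{\pm1}$, and $w\in\fix(\tau_0)$ drops out instantly by comparing $\tau_0\tau_1=\ad_{\tau_0(w)}\tau_0^{2}$ with $\tau_1\tau_0=\ad_{w}\tau_0^{2}$ --- no cancellation analysis is ever needed. You do the eigenline step first and are then left with the genuinely hard implication ``commuting powers $\Rightarrow w\in\fix(\tau_0)$''; your combinatorial attack on this (tracking top-index letters in the basis $t_k=x_1^kx_2x_1^{-k}$ of $\ker e$) is left incomplete, as you candidly admit, and on its own would be a gap. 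But your mapping-torus route does close it: the equivalence of the hypothesis with $[(ws^{\epsilon})^n,s^m]=1$ in $G=F_2\rtimes_{\tau_0}\Z$, the computation $C_G(s^m)=\fix(\tau_0)\times\langle s\rangle$ (using $\fix(\tau_0^m)=\fix(\tau_0)$ from Proposition~\ref{p:CT}), and the chain bi-orderable $\Rightarrow$ unique roots $\Rightarrow$ centralizers are root-closed are all correct, with bi-orderability of $G$ supplied by the Perron--Rolfsen criterion (unipotent action on homology, hence on the lower central quotients). It is worth noting that both proofs pivot on a unique-roots principle: Corollary~\ref{c:only-tau}(4) is precisely uniqueness of roots for Nielsen transformations inside $\aut(F_2)$, extracted from Collins--Turner fixed-subgroup theory, whereas you derive unique roots in the mapping torus from orderability. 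The paper's version is a few lines long and stays entirely within tools already developed in the paper; yours imports external machinery, but in exchange it replaces the delicate cancellation phenomenon by a robust general statement about bi-orderable groups rather than a special feature of $\aut(F_2)$ --- so your closing remark that ``the crux is the same cancellation phenomenon'' undersells your own argument, which avoids the cancellation entirely.
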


\begin{proof}   By hypothesis, 
$[\tau_0^p,\tau_1^q]=1$ for some $p,q\neq 0$, that is  $(\tau_0^{-p} \tau_1\tau_0^{p})^q=\tau_1^q$. From Corollary \ref{c:only-tau}(4)
we deduce $\tau_0^{-p} \tau_1\tau_0^p=\tau_1$. Similarly, $(\tau_1^{-1} \tau_0\tau_1)^p=\tau_0^p$ implies
 $\tau_1^{-1}\tau_0\tau_1=\tau_0$. 
 
In the linear action of ${\rm{GL}}(2,\Z)\cong\out(F_2)$ on $\mathbb{R}^2$, the image of each Nielsen transformation $\tau$
has a unique eigenspace $V$ and $\tau$ generates  the pointwise stabiliser of $V$ in ${\rm{SL}}(2,\Z)$. 
As $\tau_0$ and  $\tau_1$ commute, their eigenspaces coincide.  
Thus, replacing $\tau_0$ by its inverse if necessary,  $\tau_1 = \ad_w \tau_0$.  
 Comparing $\tau_0\tau_1 =  \ad_{\tau_0(w)} \tau_0^2$ to $\tau_1\tau_0 =  \ad_{{w}} \tau_0^2$,
 we have $\tau_0(w)=w$.
\end{proof}

We remind the reader that two elements $\phi,\psi\in\aut(F_N)$ are defined to be {\em similar} if   there is  
an inner automorphism $\ad_y$ such that $\phi  = \ad_y\circ \psi\circ \ad_y^{-1} = \ad_{y\psi(y)^{-1}} \circ \psi$. 
As conjugacy preserves the set of (powers of) Nielsen transformations, so does similarity. Thus the following
lemma would be trivial if we were to replace ``only if" by ``if". 

\begin{lemma}\label{l:similar}
Let $\tau$ be  a Nielsen transformation and suppose $p\neq 0$. Then $\phi=\ad_w\tau^p$ is a 
power of a Nielsen transformation only if $w=y\tau^p(y)^{-1}$ for some $y\in F_2$ --
equivalently, $\phi = (\ad_y\circ \tau\circ \ad_y^{-1})^p$.
\end{lemma}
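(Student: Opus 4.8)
The ``if'' direction is immediate, since $\ad_{y\tau^p(y)^{-1}}\tau^p = \ad_y\tau^p\ad_y^{-1} = (\ad_y\tau\ad_y^{-1})^p$ is conjugate to $\tau^p$, hence a power of a Nielsen transformation. The plan for ``only if'' is to reduce the general statement to the special case $p=1$ and then settle that case using the conjugacy of Nielsen transformations together with a centraliser computation in $\gl_2(\Z)$. Throughout, write $\overline{g}$ for the image of $g$ in $\out(F_2)\cong\gl_2(\Z)$.

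First I would record the reduction. Suppose $\phi=\ad_w\tau^p$ is a power of a Nielsen transformation $\sigma_0$, say $\phi=\sigma_0^m$; here $m\neq 0$ because $\overline{\phi}=\overline{\tau}^{\,p}\neq 1$. Both $\overline{\sigma_0}^{\,m}$ and $\overline{\tau}^{\,p}$ are equal, nontrivial and unipotent, so they share a unique eigenline; since a Nielsen transformation generates the pointwise stabiliser of its eigenline in $\mathrm{SL}_2(\Z)$ (as used in the proof of Lemma~\ref{c:commuting-Nielsen}), I get $\overline{\sigma_0}=\overline{\tau}^{\,\pm1}$ and correspondingly $m=\pm p$. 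Replacing $\sigma_0$ by $\sigma_0^{-1}$ if necessary (the inverse of a Nielsen transformation is again one), I may assume $\phi=\sigma^p$ with $\sigma$ a Nielsen transformation satisfying $\overline{\sigma}=\overline{\tau}$. As $\sigma$ and $\tau$ then have the same image in $\out(F_2)$, we have $\sigma=\ad_v\tau$ for some $v\in F_2$.

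The heart of the argument is the case $p=1$: to show that a Nielsen transformation of the form $\sigma=\ad_v\tau$ is \emph{similar} to $\tau$, i.e.\ $v=y\tau(y)^{-1}$ for some $y$. Since all Nielsen transformations are conjugate in $\aut(F_2)$, write $\sigma=\rho\tau\rho^{-1}$ with $\rho\in\aut(F_2)$. Then $\overline{\rho}$ centralises $\overline{\tau}$, and a direct computation shows the centraliser of the unipotent $\overline{\tau}$ in $\gl_2(\Z)$ is $\{\pm\overline{\tau}^{\,k}:k\in\Z\}$. If $\overline{\rho}=\overline{\tau}^{\,k}$, then $\rho=\ad_r\tau^k$ and $\sigma=\rho\tau\rho^{-1}=\ad_r\tau\ad_r^{-1}$ is visibly similar to $\tau$. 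The remaining case $\overline{\rho}=-\overline{\tau}^{\,k}$ is the one delicate point: here $\rho=\ad_r\,\iota\,\tau^k$, where $\iota\colon x_i\mapsto x_i^{-1}$ realises $-I$, so $\sigma=\ad_r(\iota\tau\iota^{-1})\ad_r^{-1}$. One computes $\iota\tau\iota^{-1}=\ad_{x_2}\tau$, and since $x_2=x_1^{-1}\tau(x_1^{-1})^{-1}$ is of the form $y\tau(y)^{-1}$, this $\iota\tau\iota^{-1}$—and hence $\sigma$—is again similar to $\tau$. In both cases $\sigma=\ad_y\tau\ad_y^{-1}$ for some $y$.

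Finally I would assemble the pieces: with $\sigma=\ad_y\tau\ad_y^{-1}$ in hand, raising to the $p$-th power gives $\phi=\sigma^p=\ad_y\tau^p\ad_y^{-1}=\ad_{y\tau^p(y)^{-1}}\tau^p$. Comparing with $\phi=\ad_w\tau^p$ and using that $F_2$ is centreless yields $w=y\tau^p(y)^{-1}$, and the displayed identity $\phi=(\ad_y\tau\ad_y^{-1})^p$ is exactly this. The main obstacle is the $p=1$ base case, and within it the $-I$-coset of the centraliser, where one must verify explicitly that conjugation by $\iota$ only moves $\tau$ within its similarity class rather than to a genuinely different Nielsen transformation.
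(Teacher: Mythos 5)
Your proof is correct, but it takes a genuinely different route from the paper's. The paper disposes of the lemma in one line: by Proposition~\ref{p:CT}, a nontrivial power of a Nielsen transformation has non-cyclic fixed subgroup, and the inequality of Bestvina--Handel \cite[Corollary~6.4]{BH} shows that within a single outer automorphism class at most one similarity class of representatives can have non-cyclic fixed subgroup; since $\tau^p$ itself has fixed subgroup of rank $2$, any power of a Nielsen transformation of the form $\ad_w\tau^p$ must be similar to $\tau^p$, which is exactly the conclusion. You instead argue inside $\out(F_2)\cong \gl_2(\mathbb{Z})$: you first reduce to $\phi=\sigma^p$ with $\sigma$ a Nielsen transformation in the same outer class as $\tau$ (via the eigenline fact already recorded in the proof of Lemma~\ref{c:commuting-Nielsen}), then exploit the conjugacy of all Nielsen transformations in $\aut(F_2)$ together with the computation that the centraliser of a nontrivial unipotent in $\gl_2(\mathbb{Z})$ is $\{\pm\overline{\tau}^{\,k}\}$, and finally dispatch the $-I$ coset by the explicit identity $\iota\tau\iota^{-1}=\ad_{x_2}\tau$ with $x_2=x_1^{-1}\tau(x_1^{-1})^{-1}$, which exhibits $\iota\tau\iota^{-1}$ as similar to $\tau$. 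All the steps check out, including the endgame where centrelessness of $F_2$ converts $\ad_w=\ad_{y\tau^p(y)^{-1}}$ into $w=y\tau^p(y)^{-1}$. What your argument buys is self-containedness: it uses only facts stated or proved elsewhere in the paper and avoids both Collins--Turner and the Bestvina--Handel fixed-subgroup inequality, at the cost of a longer case analysis; the paper's proof buys brevity by importing that machinery.
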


\begin{proof}  In the light of Proposition \ref{p:CT},
it suffices to argue that $\fix(\phi)$ is cyclic (possibly trivial) if $\phi$ is not similar to $\tau^p$, and this is a special case of the inequality 
 in \cite[Corollary 6.4]{BH}. %(combined with \cite[Corollary 2.2]{BH92}  
\end{proof}

\subsection{A description of subgroups in $M_k(F_2)$ with maximal product rank}

\def\t{{\underline{\tau}}}
\def\F{F^k}

In the remainder of this section, we focus on the case $\LL=F_2$. To lighten the notation, we make the abbreviations $M_k=M_k(F_2)$
and %$M_k^0=M_k^0(F_2)$. We also define 
$\F:=(\LL_1\times\dots\times \LL_k)$.  
We have a fixed identification of each $\LL_i$ with $F_2$,
with respect to which the action defining the semidirect product $M_k=\F\rtimes \aut(F_2)$ is diagonal. Note that,
for each index $i$, the subgroup $\LL_i^\tau < \LL_i$ fixed by $\tau\in\aut(\LL_i)=\aut(F_2)$ is the intersection of $\LL_i<\F$
with the centraliser in $M_k$ of $\t=(1,\dots,1;\tau)$. Likewise,  $J^\tau$ is the intersection of $J$ with the 
centraliser of $\t$, where $J$ is as defined in Section~\ref{s:generalities}.
%We shall also need to consider $I^\tau = \{\ad_w \mid \tau(w)=w\}<\Inn(F_2)$.

\begin{remark} \label{r:p_rank_mk}
As  $\rk_F(\aut(F_2))=1$ and product rank is subadditive with respect to exact sequences (Lemma~\ref{l:product_rank_exact_sequences}), the exact sequence $1 \to F_2^k \to M_k(F_2) \to \aut(F_2) \to 1$ implies that $\rk_F(M_k(F_2)) \leq k+1$. The discussion in Section~\ref{s:generalities} gives an embedding of $F_2^{k+1}$ in $M_k(F_2)$, showing that the product rank is exactly $k+1$.
\end{remark}

We want to classify embeddings $D\hookrightarrow M_k$, where $D$ is a direct product
$$
D= D_1\times\dots\times D_{k+1}
$$
with each $D_i$  a nonabelian free group. (The $D_i$ need not be finitely generated.) We will be led to consider the images of the summands $D_i$ under the retraction $\pi: M_k\to   \aut(F_2)$
and the associated projection $\ov\pi: M_k\to   \out(F_2)$. Let $K_i:=\ker\pi|_{D_i}$.

\begin{theorem}\label{t:all-the-D} With the notation established above:
\begin{enumerate}
\item After permuting the indices, $[D_i,D_i]<\LL_i$ for $i=1,\dots,k$ and $[D_{k+1},D_{k+1}]<J$;
\item  if the centraliser $C_{M_k}(D) < M_k$ is non-trivial then there is a Nielsen transformation $\tau$ 
such that $C_{M_k}(D) =\<\t\>$, and after conjugating by an element of $F^k \times J$,
$$D< \LL_1^\tau\times\dots\times \LL_k^\tau\times J^\tau \times \<\t\>,$$
with $D_i< \LL_i\times \<\t\>$ for $i=1,\dots,k$ and $D_{k+1}< J^\tau\times\<\t\>$;
\item if $C_{M_k}(D)$ is  trivial, then $D$ satisfies one of the following conclusions, after conjugating  by an element of $F^k \times J$:
\begin{enumerate}
\item[{\bf i}.] $\ov\pi(D)=1$ and $D_i<\LL_i$ for $i=1,\dots,k$, while $D_{k+1}< J$ --  in particular,
$$D< \LL_1\times\dots\times \LL_k\times J;$$
\item[{\bf ii.}] $\ov\pi(D_i)=1$ for $i=1,\dots,k$ but $\ov\pi(D_{k+1})\neq 1$: in this case there is a Nielsen transformation $\tau$ such that 
 $D_i<\LL_i^\tau$ for $i=1,\dots,k$ and $D_{k+1}< \<J, \t\>$, so
$$D< \LL_1^\tau\times\dots\times \LL_k^\tau\times \< J,\t \>;$$
\item[\bf iii.] $\ov\pi(D_{k+1})= 1$ but there is a unique $j\le k$ such that $\ov\pi(D_j)\neq 1$:  in this case 
%there is a Nielsen transformation $\tau$ such that
 $D_j<\<\LL_i, \t\>$ while $D_{k+1}< J^\tau$ and $D_i< \LL_i^\tau$ for $j\neq i\le k$, whence
$$D< \LL_1^\tau\times \dots\times \LL_{j-1}^\tau\times \<\LL_j,\tau\>\times \LL_{j+1}^\tau\dots\times \LL_k^\tau\times J^\tau.$$
\end{enumerate}
\end{enumerate}
\end{theorem}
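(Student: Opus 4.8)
The plan is to establish (1) by a product-rank count and a matching argument inside $\F$, and then to read off (2) and (3) from a fixed-subgroup analysis of the images $\pi(D_m)$ that forces the Nielsen transformation $\tau$ to appear. For (1) I would set $K:=D\cap\F=\ker(\pi|_D)$, a normal subgroup of $D$. Since $\pi(D)<\aut(F_2)$ and $\pr(\aut(F_2))=1$ (Corollary~\ref{c:only-tau}(2)), subadditivity of product rank (Lemma~\ref{l:product_rank_exact_sequences}) gives $\pr(K)\ge k$, while $K<\F\cong F_2^k$ forces $\pr(K)\le k$; hence $\pr(K)=k$. Lemma~\ref{l:free_subgroup} then lets me reorder so that $K_i:=D_i\cap\F$ is nonabelian for $i\le k$ and $D_{k+1}\cap\F=1$, so $\pi$ embeds $D_{k+1}$ in $\aut(F_2)$. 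The groups $K_1,\dots,K_k$ pairwise commute inside $\LL_1\times\dots\times\LL_k$, and since two nonabelian subgroups of a single free factor cannot commute, a Hall-type matching argument pins each $K_i$ to a distinct coordinate $\LL_i$. The delicate point is to upgrade this to the exact containments $[D_i,D_i]<\LL_i$ and $[D_{k+1},D_{k+1}]<J$; here I would collapse $M_k$ onto the one-factor quotients $\LL_j\rtimes\aut(F_2)\cong M_1(F_2)$ and use $\pr(M_1(F_2))=2$ (Remark~\ref{r:p_rank_mk}) to control how many factors can have nonabelian image, concluding that for each $j$ the commutator of every unmatched factor has trivial $\LL_j$-projection.

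Granting (1), two consequences drive the rest. Because $[D_i,D_i]<\LL_i<\F=\ker\pi$ the image $\pi(D_i)$ is abelian for $i\le k$; because $[D_{k+1},D_{k+1}]<J<\ker\ov\pi$ the image $\ov\pi(D_{k+1})$ is abelian; and as the $\ov\pi(D_m)$ pairwise commute, $\ov\pi(D)$ is abelian. The mechanism producing $\tau$ is a short computation in the semidirect product. Letting $d=(u;\phi)\in D_i$ (with $i\le k$) commute with the nonabelian subgroup $W<F_2$ corresponding to $[D_{k+1},D_{k+1}]<J$ forces $\phi(w)=w$ for all $w\in W$; thus $\phi$ fixes a nonabelian subgroup of $F_2$, so by Proposition~\ref{p:CT} every such $\phi$ is a power of one fixed Nielsen transformation $\tau$ (the one with $W<\fix(\tau)$), whence $\pi(D_i)<\<\tau\>$. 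Symmetrically, letting $(v;\psi)\in D_{k+1}$ commute with $[D_i,D_i]<\LL_i$ shows $\ad_{v_i}\psi$ fixes a nonabelian subgroup, so $\psi$ is similar to a power of a Nielsen transformation and hence is one; by Lemmas~\ref{l:allN} and~\ref{l:similar} the group $\pi(D_{k+1})$ is cyclic, generated by a Nielsen transformation which, commuting with $\tau$, is aligned with it by Lemma~\ref{c:commuting-Nielsen}. So for every factor $\ov\pi(D_m)$ is either trivial or cyclic on a (similarity of a) power of one common Nielsen transformation $\tau$.

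The centralizer is pinned down by the same identities: any $(h;\chi)\in C_{M_k}(D)$ must fix $W$ pointwise, forcing $\chi\in\<\tau\>$, and the constraints from the $\LL_i$-factors determine the coordinates $h_i$; one reads off that $C_{M_k}(D)$ is either trivial or exactly $\<\t\>$. Whether we are in (2) or (3) is governed by whether $\t=(1,\dots,1;\tau)$ itself centralizes $D$. When it does we are in (2), and after conjugating by a suitable element of $\F\times J$---using Lemma~\ref{l:similar} to straighten each factor's ``similar to $\tau^p$'' data into an honest power of $\tau$, and the $\sym(k+1)$-symmetry of Lemma~\ref{l:extra-G} to put the $J$-factor on the same footing as the $\LL_i$---the group $D$ lands in $\LL_1^\tau\times\dots\times\LL_k^\tau\times J^\tau\times\<\t\>$. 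When $\t$ is not central we are in (3), and the three sub-cases are distinguished purely by which of the $\ov\pi(D_m)$ are nontrivial (none; only $D_{k+1}$; or a unique $D_j$ with $j\le k$); a second nontrivial image would make $\t$ central, sending us back to (2), which is why at most one factor can have nontrivial $\ov\pi$ here.

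The main obstacle is the bookkeeping in (1): passing from ``matched up to cyclic projections'' to the exact containments $[D_i,D_i]<\LL_i$, which is precisely what lets all later steps align with the coordinate subgroups. A secondary difficulty is the simultaneous straightening in (2), where one must conjugate every factor into its $\tau$-fixed form at once while preserving the direct-product decomposition; the symmetry of Lemma~\ref{l:extra-G} keeps the number of genuinely different computations small, but the compatibility of the various Nielsen data (Lemma~\ref{c:commuting-Nielsen}) must be used with care so that a single $\tau$ works throughout.
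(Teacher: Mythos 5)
Your skeleton for parts (2) and (3) --- Collins--Turner forcing $\pi(D_i)<\<\tau\>$, commutation pinning down the centraliser, and a conjugation by an element of $F^k\times J$ to straighten everything --- is essentially the paper's route. The decisive gap is in your proof of part (1), and it concerns precisely the factor $D_{k+1}$. Write $q_j\colon M_k\to \LL_j\rtimes\aut(F_2)\cong M_1(F_2)$ for the quotient killing the factors $\LL_{j'}$, $j'\neq j$. Your collapsing argument is supposed to conclude that ``for each $j$ the commutator of every unmatched factor has trivial $\LL_j$-projection.'' If that conclusion is meant to apply to $D_{k+1}$, it is false: the statement to be proved is $[D_{k+1},D_{k+1}]<J$, and every nontrivial element $(g^{-1},\dots,g^{-1};\ad_g)$ of $J$ has nontrivial coordinate in \emph{every} $\LL_j$; indeed each $q_j$ restricts to an embedding of $J$, so $q_j([D_{k+1},D_{k+1}])$ is nonabelian for every $j$. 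If the conclusion is not meant to apply to $D_{k+1}$, then you have offered no argument at all for the containment in $J$. No product-rank count in $M_1(F_2)$ can see this containment; it is proved in the paper (Lemma~\ref{l:one-only}) by taking $g=(\omega_1,\dots,\omega_k;\phi)\in D_{k+1}$, commuting $g$ against a noncommuting pair in each $K_i$ to deduce that $\ad_{\omega_i}\circ\phi$ fixes a nonabelian subgroup, and then applying Proposition~\ref{p:CT}: if $\phi=\ad_x$ is inner, the fixed subgroup of $\ad_{\omega_i x}$ can only be nonabelian if $\omega_i=x^{-1}$, for every $i$, which says exactly $g\in J$; if $\phi$ is not inner, then $\ov\pi(g)$ has trace $2$ and determinant $1$, whence $\ov\pi(D_{k+1})$ is cyclic as in Lemma~\ref{l:allN}, giving $[D_{k+1},D_{k+1}]<\pi|_{D_{k+1}}^{-1}(\Inn(F_2))<J$.

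There are secondary problems of the same nature. Even for $i\le k$, the count $\pr(M_1(F_2))=2$ only forbids a third commuting subgroup that \emph{contains a nonabelian free group}; a nonabelian image $q_j(D_i)$ with no nonabelian free subgroup escapes it, and excluding that possibility amounts to identifying the centraliser of $K_j$ in $M_1(F_2)$ --- which is Proposition~\ref{p:CT} again, not product rank. Your architecture is also inverted: your second paragraph runs the fixed-subgroup analysis against $W=[D_{k+1},D_{k+1}]<J$, which presupposes part (1), whereas the paper runs it against the groups $K_i$ (available straight after your matching step), and part (1) --- including the $J$-containment --- then falls out as a corollary; this reordering is not cosmetic but is what makes the argument non-circular. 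Two local slips in your second paragraph should also be fixed: ``$\psi$ is similar to a power of a Nielsen transformation and hence is one'' is a non sequitur (Lemma~\ref{l:similar} gives a necessary condition for $\ad_w\tau^p$ to be a power of a Nielsen transformation, not that implication), and it is $\ov\pi(D_{k+1})$, not $\pi(D_{k+1})\cong D_{k+1}$, that is cyclic. Finally, in (2) you flag but do not perform the simultaneous straightening; the paper's mechanism is the identity $y_{1j}\tau^{m}(y_{1j})^{-1}=y_{2j}\tau^{m}(y_{2j})^{-1}$ supplied by Corollary~\ref{c:only-tau}(4), which is what guarantees that a single conjugator in $F^k$ normalises all the $T_i$ at once.
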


The following implicit feature of the theorem warrants explicit mention.

\begin{addendum}
If $\ov{\pi}(D_i)$ is non-trivial for at least two of the summands $D_i<D$, then $C_{M_k}(D)=\<\tau\>$ for some
Nielsen transformation $\tau$.
% and after conjugating by an element of $F^k$ and reordering the factors of $D$,
%$$D< G_1^\tau\times\dots\times G_k^\tau\times J^\tau \times \<\t\>.$$
\end{addendum}

\begin{remarks}\begin{enumerate}
\item The action of $\alpha_j\in \aut(M_k)$ interchanges cases 3({{\bf{ii}}}) and 3({{\bf{iii}}}). 
\item If $C_{M_k}(D)$ is non-trivial, $D$ might still conform to one of the descriptions in 3({{\bf{i}}}-{{\bf{iii}}}). 
\item In cases 3({\bf{{ii}}}) and 3({\bf{{iii}}}),  the subgroups $\<\LL_i,\t\>$ and $\<J,\t\>$
are isomorphic to $F_2\rtimes_\tau\Z$, a virtually special 3-manifold group in which free groups abound.
\item In case (2) there is visibly a unique maximal subgroup of the form $F^{k+1}\times\Z$ containing $D$ and in case 3({\bf{{i}}})
there is a  unique maximal subgroup of the form $F_2^{k+1}$.  We shall discuss the existence of maximal subgroups in the
other cases in Section \ref{s:maximal}
\end{enumerate} 
\end{remarks}

\subsection{The proof of Theorem~\ref{t:all-the-D}}

\begin{lemma}\label{l:one-only} Consider $D=D_1\times\dots\times D_{k+1}\hookrightarrow M_k = F^k\rtimes \aut(F_2)$.
%One can $D$ by a conjugate $\gamma D \gamma^{-1}$ with $\gamma\in\F$ so that:
\begin{enumerate}
\item There is a unique index $i_0$ such that $\pi|_{D_{i_0}}$ is injective.
\item There is a Nielsen transformation $\tau$ such that $\pi(D_j)<\<\tau\>$ for all $j\neq i_0$, and
\item $\ov{\pi}(D_{i_0})< \<[\tau]\>$. % for some $w = y\tau(y)^{-1}\in\fix(\tau)$.
\item $\pi|_{D_{i_0}}^{-1}(\Inn(F_2)) < J$.
%\item $\pi(D_i)\cap \Inn(F_2)=1$ for $i\neq i_0$. 
%\item The centraliser $C_{\aut(F_2)}(\ov\pi(D_{i_0})$ is non-trivial if and only if $\pi(D_{i_0})\cap\Inn(F_2) < I^{\tau'}$ where
%$\tau' = \ad_x\tau$ where $x\in\fix(\tau)$ and $x=y\tau(y)^{-1}$ for some $y\in F_2$.
%\item if $\ov\pi(D)<\out(F_2)$ is non-trivial and $C_{\aut(F_2)}(\ov\pi(D_{i_0})$ is non-trivial then the  Nielsen transformations in (2) and (5).
%coincide.
\end{enumerate}
\end{lemma}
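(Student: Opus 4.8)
The plan is to wring structure out of the kernels $K_i=D_i\cap F^k=\ker(\pi|_{D_i})$. Since $F^k\trianglelefteq M_k$, each $K_i$ is normal in $D_i$, and a nontrivial normal subgroup of a nonabelian free group is itself nonabelian; thus each $K_i$ is either trivial or nonabelian. The $K_i$ pairwise commute and pairwise intersect trivially (as $D=D_1\times\cdots\times D_{k+1}$), so the nonabelian ones form an internal direct product of nonabelian free groups inside $F^k\cong F_2^k$. As $\pr(F_2^k)=k$ (which follows from Lemma~\ref{l:product_rank_exact_sequences}), at most $k$ of the $K_i$ are nonabelian, so at least one is trivial, giving an index $i$ with $\pi|_{D_i}$ injective. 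For uniqueness, suppose $\pi|_{D_i}$ and $\pi|_{D_j}$ were both injective, so $P:=\pi(D_i)$ and $Q:=\pi(D_j)$ are commuting nonabelian free subgroups of $\aut(F_2)$. If $Q\cap\Inn(F_2)\neq 1$ then this intersection is nonabelian and $P$ lies in its centraliser, contradicting Corollary~\ref{c:only-tau}(1); if instead $Q\cap\Inn(F_2)=1$ then $Q$ injects into $\out(F_2)\cong\gl_2(\Z)$ as a nonabelian free group, whose centraliser in the virtually free group $\gl_2(\Z)$ is finite, forcing $\ov\pi(P)$ finite and hence $P\cap\Inn(F_2)$ nonabelian, and we conclude as before. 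This establishes (1): there is a unique $i_0$, and $K_j$ is nonabelian for every $j\neq i_0$.

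Before proving (2)--(4) I record how the $k$ nonabelian kernels sit in $F^k$. Writing $\rho_m\colon F^k\to\LL_m$ for the coordinate projections, commutativity of distinct $K_j$ together with the triviality of centralisers of nonabelian subgroups of $\LL_m\cong F_2$ shows, by a product-rank count against $\pr(F_2^k)=k$, that each $K_j$ has nonabelian image in exactly one coordinate and that distinct kernels occupy distinct coordinates. Hence there is a bijection $\sigma\colon\{1,\dots,k+1\}\setminus\{i_0\}\to\{1,\dots,k\}$ with $K_j\subseteq\LL_{\sigma(j)}$.

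Everything now flows from one commutation identity. For $d=(h_1,\dots,h_k;\phi)\in D_{i_0}$ and $\kappa\in K_j$ (supported in coordinate $\sigma(j)$), the relation $d\kappa=\kappa d$ reduces to the statement that $\ad_{h_{\sigma(j)}}\circ\phi$ fixes the nonabelian group $\rho_{\sigma(j)}(K_j)$ pointwise. By Proposition~\ref{p:CT} this composite is a power of a Nielsen transformation, so $\ov\pi(d)=[\phi]$ has trace $2$ and determinant $1$ in $\gl_2(\Z)$. As $d$ ranges over $D_{i_0}$, every element of $\ov\pi(D_{i_0})$ acquires this property, so $\ov\pi(D_{i_0})$ consists entirely of images of powers of Nielsen transformations and is therefore cyclic, $\ov\pi(D_{i_0})\le\langle[\tau]\rangle$, by the linear algebra in the proof of Lemma~\ref{l:allN}. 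This is (3). Consequently $N_0:=\pi|_{D_{i_0}}^{-1}(\Inn(F_2))=\ker(\ov\pi|_{D_{i_0}})$ is a nontrivial, hence nonabelian, normal subgroup of $D_{i_0}$. Repeating the identity for $d=(h_1,\dots,h_k;\ad_g)\in N_0$ shows that $\ad_{h_{\sigma(j)}g}$ fixes a nonabelian subgroup pointwise; since an inner automorphism with nonabelian fixed subgroup is trivial, $h_{\sigma(j)}=g^{-1}$, and as $\sigma$ is onto this forces $h_m=g^{-1}$ for all $m$, i.e. $N_0\subseteq J$. This is (4).

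For (2), note that $\pi(N_0)$ is a nonabelian subgroup of $\Inn(F_2)$, say $\pi(N_0)=\{\ad_g:g\in G_0\}$ with $G_0\le F_2$ nonabelian. For $j\neq i_0$ and $d_j\in D_j$, comparing the $\aut(F_2)$-coordinates in $d_j\nu=\nu d_j$ for $\nu\in N_0\subseteq D_{i_0}$ gives $\pi(d_j)(g)=g$ for all $g\in G_0$; thus $\pi(d_j)$ fixes $G_0$ pointwise and is a power of a Nielsen transformation whose fixed subgroup contains $G_0$. By Corollary~\ref{c:only-tau}(3) and (4) this Nielsen transformation is unique up to inversion, so one $\tau$ serves all $j\neq i_0$ and $\pi(D_j)\le\langle\tau\rangle$. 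When some $\pi(D_j)\neq 1$ we may take this $\tau$ to be the Nielsen transformation from (3): any $[\phi]\in\ov\pi(D_{i_0})$ commutes with $[\tau]^p\neq 1$ and has trace $2$, and the only trace-$2$ elements centralising $[\tau]^p$ in $\gl_2(\Z)$ lie in $\langle[\tau]\rangle$, reconciling (2) and (3); when every $\pi(D_j)=1$, part (2) is vacuous and we keep the $\tau$ of (3). The main obstacle is exactly this cluster (2)--(3): a priori $D_{i_0}$ might surject onto a nonabelian free subgroup of $\out(F_2)$, and the crux is to exclude this and to isolate a single Nielsen transformation. The decisive leverage is that the $k$ kernels $K_j$ fill all $k$ coordinates, so commuting with them constrains $\ov\pi(D_{i_0})$ in every coordinate simultaneously, upgrading the fixed-subgroup criterion of Proposition~\ref{p:CT} to the global trace-$2$ condition that renders $\ov\pi(D_{i_0})$ cyclic.
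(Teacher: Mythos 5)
Your proof is correct and follows essentially the same route as the paper's: kernel counting against $\pr(F_2^k)=k$ together with $\pr(\aut(F_2))=1$ for (1), the same coordinate bijection for the kernels $K_j$, the same commutation identity fed into Collins--Turner (Proposition~\ref{p:CT}), the trace-$2$ linear algebra of Lemma~\ref{l:allN} for cyclicity of $\ov{\pi}(D_{i_0})$, and the centraliser statement of Corollary~\ref{c:only-tau} for parts (2) and (4). The only real divergence is that you reconcile (2) with (3) by a centraliser computation in $\gl_2(\Z)$, staying entirely at the level of outer classes, whereas the paper asserts that elements of $\pi(D_{i_0})$ are honest Nielsen powers in $\aut(F_2)$ and then uses commuting Nielsen transformations; your variant is, if anything, slightly cleaner at that step.
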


\begin{proof} If $K_i=\ker\pi|_{D_i}$ is non-trivial, then it is a nonabelian free group. Since $K_1\times\dots\times K_{k+1}<\F$
and $\rk_F(\F)=k$, it follows that $\pi|_{D_{i}}$ is injective for at least one index $i_0$. 
A pair of commuting 
nonabelian free subgroups in any group generate their direct product, so $\rk_F(\aut(F_2))=1$ implies that $i_0$ is unique.
This proves (1). 

To lighten the notation, we assume $i_0=k+1$. Then $K_1\times\dots\times K_k<\F$ is a direct product of $k$ free nonabelian
free groups, and we can relabel the indices to assume $K_j<\LL_j$ for $j=1,\dots,k$. We fix a pair of non-commuting elements
$\und{u}_i,\und{v}_i\in K_i$ for $i=1,\dots,k$ and denote their non-trivial coordinates
 $u_i$ and $v_i$ respectively; for example, $\und{u}_1=(u_1,1\dots,1;1)$ and $\und{v}_1=(v_1,1\dots,1;1)$.

The remaining parts of the lemma require an analysis of the elements of $D_{k+1}$ -- let $g=(\omega_1,\dots,\omega_k; \phi)$ be
such. Since $D_{k+1}$ commutes with $K_1$, we have $g\und{u}_1 = \und{u}_1 g$,  hence
$$
(\omega_1 \phi(u_1), \omega_2,\dots, 1; \phi) = (u_1 \omega_1 , \omega_2,\dots, 1; \phi),
$$
whence $\omega_1 \phi(u_1) \omega_1^{-1} = u_1 $. Similarly, $\omega_1 \phi(v_1) \omega_1^{-1} =  v_1 $. Since $u_1$ and $v_1$ do not commute, $\fix (\ad_{\omega_1}\circ\phi)$ is not cyclic.   
Therefore, by Proposition \ref{p:CT}, either $\phi=\ad_{\omega_1}^{-1}$ or else  $\ad_{\omega_1}\circ\phi$ is a non-zero power of a Nielsen 
transformation. In the latter case, Lemma \ref{l:similar} tells us that $\phi$ itself must be a power of a Nielsen transformation, say $\phi=\tau_0^p$,
and $\omega_1=y_1\tau_0^p(y_1)^{-1}$ for some $y_1\in F_2$. 

Repeating this argument with $\{u_i,v_i\}$ in place of $\{u_1,v_1\}$, we see that either $\phi=\ad_{\omega_i}^{-1}$ for $i=1,\dots,k$
or else $\phi=\tau_0^p$ and $\omega_i$ has the form $y_i\tau_0^p(y_i)^{-1}$   for $i=1,\dots,k$. From the former case we deduce that
\begin{equation}\label{inJ}
\pi|_{D_{k+1}}^{-1}(\Inn(F_2)) < J,
\end{equation}
as claimed in (4).
From the latter case we deduce that $\ov{\pi}(D_{k+1})<\out(F_2)$ 
consists entirely of elements of trace 2 and determinant one. As in Lemma
\ref{l:allN}, this implies that $\ov\pi(D_{k+1})<\out(F_2)$ is cyclic, generated by $[\tau_0^r]$, say, where 
$\tau_0^r\in\pi(D_{k+1})$.
%We fix $\omega_i=y_i\tau_0^r(y_i)^{-1}$ so that $$g_0=(\omega_1,\dots,\omega_k;\tau_0^r)\in D_{k+1}.$$
 
At this stage, we know that  $\pi(D_{k+1})<\aut(F_2)$ is a nonabelian free group and $\ov\pi(D_{k+1})<\out(F_2)$ is cyclic.
Thus $\Lambda:=\pi(D_{k+1})\cap\Inn(F_2)$ is not abelian, and Corollary \ref{c:only-tau} provides a Nielsen transformation
$\tau$ that generates the centralizer of $\Lambda$ in $\aut(F_2)$. As $\pi(D_j)$ commutes with $\Lambda$ when $j\le k$, part  (2) of the lemma  is proved. (If $\pi(D_j)=1$ for all $j\le k$, then we take $\tau=\tau_0$.)

If $\pi(D_j)=\<\tau^p\>$ for some $j\le k$ and $p\neq 0$, then $\tau^p\in \pi(D_j)$ commutes with $\tau_0^r\in\pi(D_{k+1})$.
Hence $\tau_0 = \ad_w\circ\tau$, by Lemma \ref{l:similar}, and (3) is proved.
\end{proof}

\noindent{\bf{Proof of Theorem \ref{t:all-the-D}} }With the lemma in hand, we may assume that for $i\le k$ we have $D_i=K_i<\LL_i$
or  $D_i = K_i\rtimes \<T_i\>$ with $K_i < \LL_i$ and $T_i=(t_{i1},\dots, t_{ik} ; \tau^{p_i})$, some $p_i\neq 0$. Also,
 $D_{k+1} <J$ or else $D_{k+1} = K_0\rtimes \< T_0\>$ with $K_0<J$ and
 $T_0=(t_{01},\dots, t_{0k} ; \tau_0^{p_0})$. Part (1) of the theorem follows.
\medskip

To prove (2) and the addendum, we assume that there are at least two indices  with $D_i = K_i\rtimes \<T_i\>$
and prove that this forces $D$ to be as described in part (2) of the theorem.
For clarity of exposition, we assume that this set of indices includes $\{1,2\}$. (The superficially-exceptional
case where one of the indices is $(k+1)$ is reduced to this case by applying one of the automorphisms $\alpha_i$.) 

If $j\neq 1$, then for every $\und{w}\in K_j$ we have $T_1\und{w}= \und{w}T_1$. The $j$-coordinate of $T_1 \und{w}$ is $t_{1j}\tau^{p_1}(w)$, whereas
the $j$-coordinate of $\und{w}T_1$ is $wt_{1j}$. Thus  $\tau^{p_1}(w) = t_{1j}^{-1}w t_{1j}$ for all $w\in K_j$; in other words
$f_{1j}:=\ad_{t_{1j}}\tau^{p_1}$ fixes $K_j$. 

Lemma \ref{l:similar} provides $y_{1j}\in F_2$ such that $$t_{1j} = y_{1j}\,\tau^{p_i}(y_{1j})^{-1},$$
so $f_{1j}=\ad_{y_{1j}}  \circ\tau^{p_1} \circ\ad_{y_{1j}}^{-1}.$
 Similarly, for $j\neq 2$ we obtain
 $f_{2j}=\ad_{y_{2j}}  \circ\tau^{p_2} \circ\ad_{y_{2j}}^{-1}$ fixing $K_j$.
 Corollary \ref{c:only-tau}(4) tells us that $f_{1j}$ and $f_{2j}$ are powers of a common Nielsen transformation, hence 
 $$\ad_{y_{1j}} \circ \tau  \circ\ad_{y_{1j}}^{-1} = \ad_{y_{2j}}  \circ\tau  \circ\ad_{y_{2j}}^{-1},$$
with $ y_{1j}y_{2j}^{-1} \in \fix(\tau)$. Taking powers and simplifying, this implies that
  for all $m\in\Z$, 
 \begin{equation}\label{e:equal}
 y_{1j}\tau^{m}(y_{1j})^{-1} = y_{2j}\tau^{m}(y_{2j})^{-1}.
 \end{equation} 
 We now conjugate $D$ by $\gamma=(y_{21}, y_{12}, y_{13},\dots, y_{1k}; 1)^{-1}$, noting that 
 $$
 (y_{21}, y_{12}, y_{13},\dots, y_{1k}; 1)^{-1} \ T_1 \ (y_{21}, y_{12}, y_{13},\dots, y_{1k}; 1) = (*, z_{12}, z_{13},\dots,z_{1k}; \tau^{p_1}),
 $$
 where for $j\ge 2$ we have
 $$
 z_{1j} = y_{1j}^{-1} t_{1j} \tau^{p_1}(y_{1j}) =  y_{1j}^{-1} y_{1j} \tau^{p_1}(y_{1j})^{-1} \tau^{p_1}(y_{1j}) =1.
 $$
 Likewise, 
  $$
 (y_{21}, y_{12}, y_{13},\dots, y_{1k}; 1)^{-1} \ T_2 \ (y_{21}, y_{12}, y_{13},\dots, y_{1k}; 1) = (z_{21}, *, z_{23},\dots,z_{2k}; \tau^{p_2}),
 $$
 where,
 using equation (\ref{e:equal}) to replace  $y_{2j} \tau^{p_2}(y_{2j})^{-1}$ by $y_{1j} \tau^{p_2}(y_{1j})^{-1}$, for $j\neq 2$ we have
  $$
 z_{2j} = y_{1j}^{-1} t_{2j} \tau^{p_2}(y_{1j}) =  y_{1j}^{-1} y_{2j} \tau^{p_2}(y_{2j})^{-1} \tau^{p_2}(y_{1j}) =
 y_{1j}^{-1} y_{1j} \tau^{p_2}(y_{1j})^{-1} \tau^{p_2}(y_{1j}) =1.
 $$
 An entirely similar argument applies to each index $i$ with $D_i = K_i\rtimes\<T_i\>$.
 
 Thus, after this conjugation (and abusing notation by identifying $D$ with $D^\gamma$), the $T_i$ have the form
 $$
 T_1 = (t_1,1,\dots,1;\tau^{p_1}),\ \ 
 T_2 = (1,t_2,1,\dots,1;\tau^{p_2}),\ \  
 T_i = (1,\dots,t_i,\dots,1; \tau^{p_i}).
 $$
 The commutation  $[T_1, K_j]=1$ now forces $K_j<\fix(\tau^{p_1})=\fix( \tau)$ for  $j\ge 1$ (including the case
 $K_j=D_j$). And  $[T_2, K_1]=1$ forces $K_1<\fix(\tau)$.
 Finally, the relations $[T_1,T_i]=1$ imply $t_j\in\fix(\tau)$ for $j=1,\dots,k$. In particular, $T_i \in \LL_i^\tau \times \<\t\>$, hence
 $D_i <eq \LL_i^\tau \times \<\t\>$ for $i\le k$. (The superficially-exceptional case $j=k+1$ can 
 again be handled by exchanging $D_{k+1}$ and some $D_i$ using $\alpha_i\in\aut(M_k)$.)
 This completes the proof of (2).
 
 \medskip
 It remains to consider what happens when $\ov{\pi}(D_i)\neq 1$ for at most one index $i$. Case 3({\bf{\rm{i}}}) is covered
 by Lemma \ref{l:one-only} and Case 3({\bf{\rm{ii}}}) can be reduced to 3({\bf{\rm{iii}}}) by applying the automorphism $\alpha_j$,
 so we address Case 3({\bf{\rm{iii}}}), taking $j=1$ for clarity. The proof in this case is a simplified version of the proof of (2):
 we have $D_j<\LL_j$ for $j=2,\dots,k$ and $D_{k+1}<J$, and after conjugating we may assume that 
 $D_1 = \LL_1\rtimes\<T_1\>$ where $T_1=(t_1,1,\dots,1;\tau^p)$ with $p\neq 0$. Again, the commutation  $[T_1, D_j]=1$ forces
 $D_j<\LL_j^\tau$ for $j\le k$ and $D_{k+1}<J^\tau$.
 \qed

\subsection{Extending by $\sym(k+1)$}

Theorem  \ref{t:all-the-D} describes embeddings into $M_k$ of direct products of $(k+1)$ free groups. The following proposition
shows that one gets no extra embeddings when the target is enlarged to $M_k\rtimes \sym(k+1)$, where the action in the
semidirect product is the same as Lemma \ref{l:extra-G}: the
transposition $(i\ k+1)\in\sym(k+1)$ acts as $\alpha_i$.

\begin{proposition}\label{p:augment} If $D$ is the direct product of $(k+1)$ nonabelian free groups, then 
the image of every embedding $D\hookrightarrow M_k\rtimes \sym(k+1)$ lies in $M_k\times 1$. Furthermore, if $\phi \in M_k\rtimes \sym(k+1)$ centralizes $D$, then $\phi$ lies in $M_k\times 1$.
\end{proposition}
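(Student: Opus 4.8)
The plan is to exploit the fact that $M_k\rtimes\sym(k+1)$ contains $M_k\times 1$ as a finite-index normal subgroup, so that $\rk_F$ is unchanged and the permutation part of any embedding is constrained by how $\sym(k+1)$ can act on a direct product of $(k+1)$ free factors. Write $p\colon M_k\rtimes\sym(k+1)\to\sym(k+1)$ for the projection onto the quotient, and let $D=D_1\times\dots\times D_{k+1}$ be embedded in $M_k\rtimes\sym(k+1)$. The first step is to observe that $\ker p=M_k\times 1$ has product rank $k+1$ by Remark~\ref{r:p_rank_mk}, while the quotient $\sym(k+1)$ is finite (hence has product rank $0$); since product rank is subadditive across the exact sequence (Lemma~\ref{l:product_rank_exact_sequences}), every nonabelian free subgroup of $M_k\rtimes\sym(k+1)$ must in fact lie in the finite-index subgroup $M_k\times 1$. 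More precisely, $K:=D\cap(M_k\times 1)$ is a finite-index normal subgroup of $D$, so $\pr(K)=\pr(D)=k+1$ by Lemma~\ref{l:product_rank_exact_sequences}; applying Lemma~\ref{l:free_subgroup} to $K\lhd D_1\times\dots\times D_{k+1}$ shows $K$ meets all $k+1$ factors, and indeed $p(D_i)$ is a finite subgroup of a free group, hence trivial, for every $i$. Therefore $p(D)=1$, i.e. $D<M_k\times 1$, which is the first assertion.

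For the centralizer statement, I would take $\phi\in M_k\rtimes\sym(k+1)$ with $\phi$ centralizing $D$ and show first that $p(\phi)=1$. Write $\phi=(m;\sigma)$ with $m\in M_k$ and $\sigma=p(\phi)\in\sym(k+1)$. Since $D<M_k\times 1$ is a direct product of $k+1$ nonabelian free groups and $\sym(k+1)$ acts on $M_k$ through the permutation automorphisms $\alpha_i$ of Lemma~\ref{l:extra-G}, conjugation by $\phi$ induces the permutation $\sigma$ on the visible factors $\LL_1,\dots,\LL_k,J$. The key point is that centralizing forces $\sigma$ to fix $D$ factor-by-factor in a way incompatible with any nontrivial permutation: applying Theorem~\ref{t:all-the-D} to the embedding $D<M_k$, at least $k$ of the $k+1$ summands $D_i$ have nontrivial, distinct ``locations'' among the factors $\LL_1,\dots,\LL_k,J$ (each $D_i$ has $[D_i,D_i]$ landing in a single distinct factor by part~(1) of that theorem). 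A nontrivial $\sigma$ would permute these locations nontrivially and hence could not centralize $D$, since it would send some $D_i$ off itself. Thus $p(\phi)=1$, so $\phi\in M_k\times 1$.

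The cleanest way to make the last deduction precise is to note that conjugation by $(m;\sigma)$ sends the factor containing $[D_i,D_i]$ to the $\sigma$-image of that factor, twisted by an inner-type adjustment from $m$; for $\phi$ to centralize $D_i$ it must preserve the factor that $[D_i,D_i]$ generates, and by part~(1) of Theorem~\ref{t:all-the-D} these factors are distinct for distinct $i$ (ranging over $\{\LL_1,\dots,\LL_k,J\}$). Hence $\sigma$ fixes each of the at least $k$ occupied factors, and since $\sym(k+1)$ is acting faithfully by permuting all $k+1$ of them, fixing $k$ of them forces $\sigma$ to fix the remaining one as well, so $\sigma=1$. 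Once $\phi\in M_k\times 1$ is established, the statement is proved.

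I expect the main obstacle to be the centralizer part, specifically isolating the combinatorial argument that a nontrivial permutation $\sigma$ cannot centralize $D$. The subtlety is that the $\alpha_i$ mix the factors with inner twists, so one cannot literally say ``$\sigma$ permutes the $D_i$''; the right invariant is the factor of $M_k$ (among $\LL_1,\dots,\LL_k,J$) in which each commutator subgroup $[D_i,D_i]$ lives, which $\sigma$ genuinely permutes and conjugation by $m$ preserves. Pinning down that this factor-location is a conjugation-invariant preserved by centralizing elements, and that Theorem~\ref{t:all-the-D}(1) supplies $k+1$ distinct locations, is the heart of the matter; the rest is the routine subadditivity bookkeeping already assembled in the earlier lemmas.
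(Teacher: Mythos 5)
Your centralizer paragraph is essentially the paper's argument, but your proof of the first assertion --- that the image of $D$ lies in $M_k\times 1$ --- has a genuine gap, and that is where the real content of the proposition lies. The opening claim, that subadditivity of product rank forces every nonabelian free subgroup of $M_k\rtimes\sym(k+1)$ into $M_k\times 1$, does not follow: Lemma~\ref{l:product_rank_exact_sequences} only bounds $\pr\bigl(M_k\rtimes\sym(k+1)\bigr)$ by $k+1$; it says nothing about where an individual free subgroup sits, and a nonabelian free subgroup can perfectly well surject onto a nontrivial subgroup of the finite quotient (free groups have ample finite quotients) --- ruling this out for the factors $D_i$ is exactly what the proposition asserts. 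Your ``more precise'' version correctly shows that $K=D\cap(M_k\times 1)$ has product rank $k+1$ and meets each factor $D_i$ in a nonabelian finite-index normal subgroup $K_i$, but the concluding step --- ``$p(D_i)$ is a finite subgroup of a free group, hence trivial'' --- is a non sequitur: $p(D_i)\cong D_i/K_i$ is a finite \emph{quotient} of a free group, realized as a subgroup of $\sym(k+1)$, and such quotients need not be trivial. Nothing in the rank bookkeeping excludes, say, $D_1$ mapping onto a copy of $\mathbb{Z}/2\mathbb{Z}$ inside $\sym(k+1)$ with kernel $K_1$; so the first assertion is not proved.

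The missing idea is precisely the commutator-location argument you deploy for the centralizer, except that it must be run \emph{before} one knows $D<M_k\times 1$, applied to the subproduct $K_1\times\cdots\times K_{k+1}<M_k\times 1$ that you have already constructed. By Theorem~\ref{t:all-the-D}(1), after reindexing, $[K_i,K_i]<\LL_i$ for $i\le k$ and $[K_{k+1},K_{k+1}]<J$. Each $K_i=K\cap D_i$ is preserved by conjugation by every $d\in D$ (conjugation by $d$ preserves the direct factor $D_i$ and the normal subgroup $K$), hence so is $[K_i,K_i]$; on the other hand, writing $d=(m;\sigma)$ and $J=\LL_{k+1}$, conjugation by $d$ carries $\LL_i$ to $\LL_{\sigma(i)}$, because each of $\LL_1,\dots,\LL_k,J$ is normal in $M_k$. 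Since the subgroups $[K_i,K_i]$ are nontrivial and occupy all $k+1$ distinct factors, this forces $\sigma=1$ for every $d\in D$, i.e.\ $D<M_k\times 1$. This is exactly the paper's proof, which runs the same argument with characteristic finite-index subgroups $D_i^*<D_i$ (intersections of kernels of homomorphisms $D_i\to\sym(k+1)$, after reducing to $D$ finitely generated) in place of your $K_i$. Once this is in place, your centralizer paragraph goes through as written and coincides with the paper's treatment.
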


\begin{proof} We identify $D$ with its image in $M_k\rtimes \sym(k+1)$.
There is no loss of generality in assuming that $D$ is finitely generated. Let $D^*<D$ be the subgroup obtained by replacing each
direct factor $D_i<D$ with the intersection of the kernels of all non-trivial homomorphisms from $D_i$ to (the abstract group)  $\sym(k+1)$. 
Then $D^* <  M_k \times 1$ and as in Theorem \ref{t:all-the-D}(1) we may assume that $[D^*_i, D^*_i]<\LL_i$ for $i\le k$
and $[D^*_{k+1}, D^*_{k+1}]<J$. The action of $D$ by conjugation on $D^*$ preserves each of the subgroups $[D^*_i, D^*_i]$.
In contrast, conjugation in $M_k\rtimes \sym(k+1)$ by any element of the form $(m; \sigma)$ will (if we write $J=\LL_{k+1}$) send $\LL_i$
to $\LL_{\sigma(i)}$; in particular it will not leave $[D^*_i, D^*_i]$ invariant if $\sigma(i)\neq i$. If $\phi$ centralizes $D$ then $\phi$ will also leave the groups $[D_i^*,D_i^*]$ invariant, so must also lie in $M_k \times 1$.
\end{proof}

\section{Stabilizers of collapsed roses and cages}\label{s:4}

 In this paper we restrict our attention to two simple examples of free splittings, which are \emph{collapsed roses} and \emph{cages}. We will make use of Bass--Serre theory, for which the standard references are Serre's book \cite{Serre} and the topological approach given by Scott and Wall \cite{SW}.

\subsection{The Bass--Serre tree of a collapsed rose}
 
A \emph{collapsed rose with $k$ petals} is a graph of groups decomposition of $F_N$ with a single vertex group and $k$ loops with trivial edge groups. The vertex group is necessarily isomorphic to a free factor $A\cong F_{N-k}$. We abuse notation slightly and refer to a free splitting $T$ of $F_N$ as a collapsed rose if the corresponding graph of groups is a collapsed rose. In the above notation, the vertex stabilizers of $T$ are the conjugates of the free factor $A$. A rose with one loop is simply a nonseparating free splitting. 

In $T$ we denote $v_A$ to be the vertex with stabilizer $A$ and pick representatives $e_1, \ldots, e_k$ of each orbit of edges that have initial vertex $v_A$. A \emph{stable letter} for $e_i$ is a choice of element $x_i$ such that $x_iv_A$ is the terminal vertex of $e_i$. Changing either the representative $e_i$ or the translating element gives possible stable letters of the form $ux_i^{\pm 1}v$, where $u,v \in A$. The free group $F_N$ is generated by $A$ and the stable letters $x_1, \ldots, x_k.$
 
\subsection{Stabilizers of collapsed roses in $\Out(F_N)$}\label{s:k-rose}

Let $T$ be a splitting of $F_N$ corresponding to a collapsed rose with $k$ petals. As in Section~\ref{s:splittings_background}, we let $\Stab(T)$ denote the stabilizer of $T$ in $\out(F_N)$. Letting $x_1, \ldots, x_k$ be a choice of stable letters for the rose, there is a finite subgroup $W_k$ generated by automorphisms $\sigma$ such that $\sigma$ is the identity when restricted to $A$ and for every stable letter $\sigma(x_i)=x_j^\epsilon$ for some $j$ and $\epsilon \in \{1,-1\}$. One can think of $W_k$ as the subgroup of $\out(F_N)$ given by permuting and inverting the petals of the rose, and $W_k$ is isomorphic to the semidirect product of $\mathbb{Z}/2\mathbb{Z}^k$ and the symmetric group $\sym(k)$. The homomorphism \[ F:\Stab(T) \to \aut(T/F_N)\] to the automorphism group of the rose is split surjective, and $W_k$ is a set of coset representatives of $\Stab^0(T)$ in $\Stab(T)$. 

\begin{proposition}\label{p:stab-description-rose} Let $T$ be a collapsed rose with $k$ petals. Let $\st$ and $\sto$ be the respective preimages of $\Stab(T)$ and $\Stab^0(T)$ in $\Aut(F_N)$. Let $b_A$ be a vertex of the tree with $F_N$-stabilizer $A$ and let $\st_A $ and $\sto_A$ be the respective subgroups of $\st$ and $\sto$ that fix $b_A$. An automorphism $\phi$ is an element of $\st_A$ if and only if there exists $w \in W_k$ such that:

\begin{enumerate}
 \item $\phi$ restricts to an automorphism of $A$ and,
 \item for each stable letter $x_i$ there exist $u_i, v_i \in A$ such that $\phi(x_i)=u_iw(x_i)v_i$.
\end{enumerate}
The group $\st_A$ is isomorphic to \[ M_{2k}(A) \rtimes W_k =(A^{2k}\rtimes \aut(A)) \rtimes W_k, \] via the isomorphism \[ \phi \mapsto (u_1^{-1}, \ldots, u_k^{-1}, v_1, \ldots, v_k; \phi|_A ; w). \] The group $W_k$ acts on $M_{2k}(A)$ as a subgroup of the group $\sym(2k+1)$ defined in Lemma \ref{l:extra-G}, and $\phi \in \sto_A$ if and only if $w=1$ in the above. 

If $e_j$ is the edge joining $b_A$ to $x_jb_A$ then the subgroup $\sto_{e_j} <  \sto_A$ fixing $e_j$ consists of automorphisms $\phi \in \sto_A$ such that $\phi(x_j)=x_jv_j$ (i.e. $u_j=1$); it is isomorphic to $M_{2k-1}(A)=A^{2k-1} \rtimes \aut(A)$.
\end{proposition}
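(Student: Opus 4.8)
The whole proposition can be read off the unique $\phi$-twistedly equivariant isomorphism $f_\phi\colon T\to T$ attached to each $\phi\in\st$ (Section~\ref{s:splittings_background} and Lemma~\ref{l:lifting_condition}), working with the free-product description $F_N=A\ast\langle x_1\rangle\ast\cdots\ast\langle x_k\rangle$ coming from Bass--Serre theory, in which $b_A$ is the vertex of stabiliser $A$ and $e_i$ runs from $b_A$ to $x_ib_A$. The key reduction is to split off $W_k$ at the outset. First note that for any $\phi\in\st_A$ we have $\phi(A)=A$: since $f_\phi(b_A)=b_A$, applying $f_\phi(ab_A)=\phi(a)f_\phi(b_A)$ to $a\in A$ (so $ab_A=b_A$) gives $\phi(a)b_A=b_A$, hence $\phi(A)\subseteq\Stab(b_A)=A$, and running the same argument for $\phi^{-1}$ yields equality; this is conclusion (1). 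Next, the restriction to $\st_A$ of the map $F\colon\Stab(T)\to\Aut(T/F_N)\cong W_k$ is surjective, because the petal automorphisms comprising $W_k$ fix $A$ pointwise and hence fix $b_A$; its kernel is $\sto_A$, and the inclusion $W_k<\st_A$ splits it, so $\st_A=\sto_A\rtimes W_k$. Thus each $\phi\in\st_A$ has a unique $w\in W_k$ with $\phi w^{-1}\in\sto_A$, and I will analyse $\sto_A$ first.

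For $\phi\in\sto_A$ the map $f_\phi$ fixes $b_A$ and induces the identity on $T/F_N$, so $f_\phi(e_i)$ lies in the orbit of $e_i$ with the same orientation; the only such edges at $b_A$ are the $A$-translates $u_ie_i$ with $u_i\in A$, and $u_i$ is unique since $e_i$ has trivial stabiliser. Comparing terminal vertices in $\phi(x_i)b_A=\partial_1 f_\phi(e_i)=u_ix_ib_A$ gives $\phi(x_i)=u_ix_iv_i$ with $u_i,v_i\in A$; uniqueness of the pair $(u_i,v_i)$ is the free-product normal-form fact that $a x_i=x_i b$ in $A\ast\langle x_i\rangle$ forces $a=b=1$. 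Twisting back by $w$ recovers condition (2) for a general $\phi\in\st_A$: writing $\phi'=\phi w^{-1}\in\sto_A$ we have $\phi(x_i)=\phi'(w(x_i))$, and if $w(x_i)=x_j$ then $\phi'(x_j)=u_jw(x_i)v_j$, while if $w(x_i)=x_j^{-1}$ then $\phi'(x_j^{-1})=v_j^{-1}w(x_i)u_j^{-1}$, both of the form $u_iw(x_i)v_i$ after absorbing inverses into $A$. Conversely, given (1)--(2) I would define $f_\phi(gb_A):=\phi(g)b_A$ (well defined by (1)), extend simplicially using (2), verify bijectivity by building $f_{\phi^{-1}}$ the same way, and conclude $f_\phi$ is twistedly equivariant and fixes $b_A$, so $\phi\in\st_A$; and $\phi\in\sto_A$ precisely when $w=1$, since $\sto_A=\ker(F|_{\st_A})$.

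Next I would show that $\Theta\colon\phi\mapsto(u_1^{-1},\dots,u_k^{-1},v_1,\dots,v_k;\phi|_A)$ is an isomorphism $\sto_A\cong M_{2k}(A)$. A direct computation of $(\phi\psi)(x_i)=\phi(u_i'x_iv_i')=\phi|_A(u_i')\,u_i\,x_i\,v_i\,\phi|_A(v_i')$ shows the left multiplier composes as $\phi|_A(u_i')u_i$ and the right as $v_i\phi|_A(v_i')$; matching this against the law $(g;\theta)(g';\eta)=(g\cdot\theta(g');\theta\eta)$ of $M_{2k}(A)=A^{2k}\rtimes\Aut(A)$ shows $\Theta$ is a homomorphism, the inverses on the $u$-coordinates being exactly what makes the left multipliers transform with the correct variance. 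Injectivity is immediate, and surjectivity follows by sending $(g_1,\dots,g_{2k};\theta)$ to the automorphism restricting to $\theta$ on $A$ with $x_i\mapsto g_i^{-1}x_ig_{k+i}$. To identify the $W_k$-action on $\st_A=\sto_A\rtimes W_k$, I would compute the conjugation action of $W_k$ on $\sto_A$: a petal transposition $w$ gives $(w\phi w^{-1})(x_i)=u_jx_iv_j$, swapping the $i$- and $j$-data in both blocks, while a petal inversion gives $(w\phi w^{-1})(x_i)=v_i^{-1}x_iu_i^{-1}$, transposing coordinates $i$ and $k+i$. Both fix $\phi|_A$ (as $w|_A=\mathrm{id}$), so they are honest permutations of the $2k$ factors fixing the $J$-factor; hence $W_k$ acts through the subgroup $\sym(2k)<\sym(2k+1)$ of Lemma~\ref{l:extra-G} as the hyperoctahedral group on the $k$ pairs $\{i,k+i\}$, giving $\st_A\cong M_{2k}(A)\rtimes W_k$ via the displayed map.

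Finally, $\phi\in\sto_A$ fixes $e_j$ iff $f_\phi(e_j)=e_j$; as $f_\phi(e_j)$ runs from $b_A$ to $\phi(x_j)b_A=u_jx_jb_A$ it equals $u_je_j$, which is $e_j$ iff $u_j=1$. Under $\Theta$ this says the $j$-th coordinate vanishes, so $\sto_{e_j}$ is the subgroup $A^{2k-1}\rtimes\Aut(A)=M_{2k-1}(A)$ obtained by deleting that coordinate (closure under the group law is clear since the $j$-th coordinate of a product stays trivial). I expect the main obstacle to be the bookkeeping in the characterisation and in the verification that $\Theta$ is a homomorphism: one must track both the $\Aut(A)$-twisting and the orientation data carried by $W_k$, and it is precisely the interplay between the inverses on the $u$-coordinates and the diagonal $\Aut(A)$-action that makes the semidirect-product structure $M_{2k}(A)\rtimes W_k$ emerge correctly.
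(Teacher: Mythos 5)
Your proposal is correct and takes essentially the same route as the paper: both arguments run through the unique $\phi$-twistedly equivariant map $f_\phi$ on the Bass--Serre tree with its coset model of vertices, arrive at the same explicit map $\phi \mapsto (u_1^{-1},\ldots,u_k^{-1},v_1,\ldots,v_k;\phi|_A)$, and identify the $W_k$-action as signed permutations of the $k$ pairs inside $\sym(2k+1)$. The only differences are organizational and cosmetic: you split off $W_k$ via $\st_A = \sto_A \rtimes W_k$ at the outset and twist back by $w$, whereas the paper characterizes $\st_A$ in one step via $\phi(AXA)=AXA$, and you write out the homomorphism and conjugation computations that the paper leaves implicit.
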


\begin{proof}
 By the work on automorphic lifts in Section~\ref{s:automorphic_lifts}, $\phi \in \st$ if and only if there exists a $\phi$-twistedly equivariant map $f_\phi \co T \to T$ that preserves the $F_N$-orbits of edges and their orientations. Suppose that $\phi$ satisfies conditions $(1)$ and $(2)$ of the proposition. We define $f_\phi$ on the vertex set of $T$ by the map of cosets $gA \mapsto \phi(g)A$ (as the vertices of $T$ correspond to cosets of $A$ via Bass--Serre theory). Let $X=\{x_1,\ldots, x_k,x_1^{-1},\ldots, x_k^{-1}\}$. The cosets $gA$ adjacent to $1A$ are of the form $axA$, for $a \in A$ and $x \in X$. Therefore, under this correspondence between vertices and cosets of $A$, two vertices $gA$ and $hA$ span an edge in $T$ if and only if $g^{-1}h \in AXA$. As $\phi(AXA)=AXA$, the map $f_\phi$ preserves edges and so determines an isomorphism of $T$ which is clearly $\phi$-twistedly equivariant. Conversely, if $\phi \in \st_A$ then let $f_\phi: T \to T$ be a $\phi$-twistedly equivariant map fixing $b_A$. As $f_\phi(b_A)=b_A$, it follows that $\phi$ preserves $\Stab(b_A)=A$, so restricts to an automorphism of $A$. As $f_\phi$ preserves edges, using the same reasoning as above, we must have $\phi(AXA)=AXA$, from which it follows that $\phi$ must also satisfy (2).

The action of $\phi$ on the quotient graph $T/F_N$ is given by the last coordinate $w \in W_k$ in its decomposition, so $\phi \in \sto_A$ if and only if $w=1$. This gives an identification of $\sto_A$ with $M_{2k}(A)$ via the map \[ \phi \mapsto (u_1^{-1}, \ldots, u_k^{-1}, v_1, \ldots, v_k; \phi|_A). \] Conjugation by an element of $W_k$ acts on this group by a signed permutation of the $k$ pairs $\{u_i^{-1},v_i\}$, so $W_k$ is a subgroup of the group $\sym(2k+1)$ defined in Lemma~\ref{l:extra-G}.

An element $\phi \in \sto_A$ fixes the edge between $1A$ and $x_jA$ if and only if $\phi(x_j)A=x_jA$ (equivalently $f_\phi$ fixes this terminal vertex). As $\phi(x_j)A=u_jx_jA$, this happens if and only if $u_j=1$ and gives our description of $\sto_{e_j}$.
\end{proof}

Following Proposition~\ref{p:automorphic-lift}, each edge stabilizer $\sto_e$ is an automorphic lift of $\stab^0(T) < \out(F_N)$ to $\aut(F_N)$.

\subsection{Arc stabilizers in the Bass--Serre tree of a collapsed rose with $N-2$ petals}

In the proof of the Theorem~A, we will need to understand arc stabilizers for the action of $\st$ on the Bass--Serre tree of a collapsed rose with $N-2$ petals (with the notation of the previous section). In this case, each vertex stabilizer $A$ is isomorphic to $F_2$, so that $\sto_A \cong M_{2N-4}(F_2)$. Proposition~\ref{p:stab-description-rose} and Remark~\ref{r:p_rank_mk} together imply that the product rank of a vertex stabilizer of $T$ (in $\aut(F_N)$) is $2N-3$, and the product rank of an edge stabilizer is  $2N-4$. For later arguments, we need to show that the product rank drops further when one takes stabilizers of longer arcs in $T$.

\begin{lemma}\label{l:consecutive_edges}
Let $T$ be a collapsed rose with $N-2$ petals, and as above let $\st$ be the preimage of $\stab(T)$ in $\aut(F_N)$. If $\alpha$ is any edge path of length $\geq 2$ in $T$ then the pointwise stabilizer of $\alpha$ with respect to the $\st$--action on $T$ has product rank at most $2N-5$.
\end{lemma}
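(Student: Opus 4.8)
The plan is to reduce to a path of length exactly two and then analyse its stabilizer inside the vertex group $\sto_A\cong M_{2k}(A)$, where $k=N-2$ and $A\cong F_2$. First I observe that if $\alpha$ is reduced of length $\ge 2$ it contains a subpath $\beta$ consisting of two consecutive edges meeting at a vertex $m$ and having distinct endpoints; the pointwise stabilizer of $\alpha$ is contained in that of $\beta$, and $\pr$ is monotone under inclusion, so it suffices to treat $\beta$. Thus assume $\alpha=\beta$ has length $2$ with middle vertex $m$. Conjugating by an element of $F_N\le\st$ (an inner automorphism) I may assume $m=b_A$, so the pointwise stabilizer $S$ lies in $\st_A$. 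Since $[\st_A:\sto_A]=|W_k|$ is finite, $S_0:=S\cap\sto_A$ has the same product rank as $S$ by Lemma~\ref{l:product_rank_exact_sequences}, and it therefore suffices to bound $\pr(S_0)$ inside $\sto_A\cong M_{2k}(A)$, which has product rank $2k+1=2N-3$.

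The two edges of $\alpha$ run from $b_A$ to distinct neighbours, each of the form $ax_j^{\pm1}A$ with $a\in A$. Using $W_k$ (which permutes and inverts petals) together with conjugation by inner automorphisms $\ad_a$ ($a\in A$, fixing $b_A$) and by elements of $\sto_{e_1}$ (which fix both $b_A$ and the first edge), I intend to normalise the first edge to $e_1$ and bring the second into one of three standard positions: (a) the edge to $x_2A$, a different petal; (b) the edge to $x_1^{-1}A$, the same petal in the opposite direction; or (c) the edge to $ax_1A$ with $a\neq 1$, the same petal in the same direction. These cases are genuinely distinct because $\st$ permutes the $F_N$--orbits of edges and so preserves the number of distinct orbits met by $\alpha$.

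By Proposition~\ref{p:stab-description-rose}, an element $\phi\in\sto_A$ with $\phi(x_i)=u_ix_iv_i$ fixes the edge to $x_iA$ iff $u_i=1$ and fixes the edge to $x_i^{-1}A$ iff $v_i=1$. In case (a) the conditions read $u_1=1,\ u_2=1$, and in case (b) they read $u_1=1,\ v_1=1$; either way $S_0$ is obtained from $M_{2k}(A)$ by killing two of the $2k$ coordinate factors $\LL_i$, so $S_0\cong M_{2k-2}(A)=M_{2N-6}(F_2)$. By Remark~\ref{r:p_rank_mk} this has product rank $2N-5$, as required.

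Case (c) is the crux and the main obstacle, since here $S_0$ is not a standard $M_j(A)$. Fixing the edge to $x_1A$ gives $u_1=1$, while fixing the edge to $ax_1A$ forces $\phi(a)=a$; hence $S_0\cong A^{2k-1}\rtimes\Fix_{\aut(A)}(a)$, a subgroup of the edge stabiliser $\sto_{e_1}\cong M_{2k-1}(A)=M_{2N-5}(F_2)$, which has product rank $2N-4$. I will show $\pr(S_0)\le 2N-5$ by contradiction: suppose $S_0$ contained a direct product $D=D_1\times\cdots\times D_{2N-4}$ of nonabelian free groups, a product of maximal rank in $M_{2N-5}(F_2)$. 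Writing $\pi\co M_{2N-5}(F_2)\to\aut(A)$ for the retraction $\phi\mapsto\phi|_A$, Lemma~\ref{l:one-only} supplies an index $i_0$ for which $\pi|_{D_{i_0}}$ is injective and $\ov{\pi}(D_{i_0})<\out(A)$ is cyclic; consequently $\Lambda:=\pi(D_{i_0})\cap\Inn(A)$ is nonabelian, as it contains the commutator subgroup of the nonabelian free group $\pi(D_{i_0})$. But $D_{i_0}<S_0$ forces $\pi(D_{i_0})<\Fix_{\aut(A)}(a)$, so every $\ad_w\in\Lambda$ satisfies $waw^{-1}=a$, i.e. $w\in C_A(a)$. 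Since centralisers in the free group $A\cong F_2$ are cyclic and $a\neq 1$, this would make $\Lambda$ abelian, a contradiction. Hence $\pr(S_0)\le 2N-5$, and the lemma follows. The key point, exploited only in this case, is that fixing a nontrivial element of $A\cong F_2$ confines the inner part of the free factor $D_{i_0}$ to a cyclic centraliser.
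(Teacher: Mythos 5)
Your proof is correct and follows essentially the same route as the paper's: the same reduction to a length-two path, the same three-case analysis of the second edge (different petal, same petal reversed, and the translated copy $ax_1A$), and in the crucial third case the same appeal to parts (1) and (3) of Lemma~\ref{l:one-only} combined with the fact that centralisers of nontrivial elements of $A\cong F_2$ are cyclic. The only cosmetic differences are that you normalise the path explicitly using $W_k$, inner automorphisms and $\sto_{e_1}$, and quote Remark~\ref{r:p_rank_mk} where the paper writes out the exact sequence and subadditivity directly.
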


\begin{proof} It is enough to prove the result for a path $\alpha=\{e,e'\}$ of length two, and as product rank is preserved under passing to finite-index subgroups, we may pass to $G:=\sto$. We want to show that the intersection $G_e \cap G_{e'}$ of the edge stabilizers in $G$ has product rank at most $2N-5$. As above, suppose that $A \cong F_2$ is the subgroup of $F_N$ which stabilizes the vertex adjacent to $e$ and $e'$. Without loss of generality we may assume that $e$ is the edge between vertices with stabilizers $A$ and $x_1Ax_1^{-1}$ respectively. Then $G_e$ is the subgroup of $\Aut(F_N)$ that preserves $A$, sends $x_1$ to $x_1v_1$, and sends each $x_i$ to a word of the form $u_ix_iv_i$ for $2 \leq i \leq N-2$ (where as above each $u_i, v_i \in A$). Note that we can replace a stable letter $x_i$ with $ux_i^{\pm1}v$ if $2 \leq i \leq N-2$ and $u,v \in A$. We may also replace $x_1$ with $x_1v$ for some $v \in A$ without changing the description of $G_e$. Up to the above replacements, there are three possibilities for the stabilizer of the second vertex of $e'$: it is either of the form $x_jAx_j^{-1}$ for some $j$ satisfying $2 \leq j \leq N-2$, of the form $x_1^{-1}Ax_1$, or of the form $(wx_1)A(wx_1)^{-1}$ for some nontrivial $w \in A$. In the first case, we see that $G_e \cap G_{e'}$ consists of automorphisms $\phi$ preserving $A$ with the added restriction that both $u_j=1$ and $u_1=1$ in the above notation. Hence the intersection decomposes as an exact sequence \[ 1 \to A^{2k-2} \to G_e \cap G_{e'} \to \Aut(A) \to 1, \] where $k=N-2$. The product rank of the kernel is then $2N-6$ and the product rank of $\aut(A)=\aut(F_2)$ is one, so the product rank of the intersection is at most $2N-5$. 

In the second case where the terminal vertex of $e'$ has stabilizer $x_1^{-1}Ax_1$ a similar argument applies. One sees that $G_e \cap G_{e'}$ is the subgroup of the stabilizer of $v_A$ which fixes $x_1$ (hence $u_1=v_1=1$), and we have the same exact sequence as above.

In the final case, the intersection $G_e \cap G_{e'}$ is given by the automorphisms in $G_e$ which also fix the subgroup $(wx_1)A(wx_1)^{-1}$. Note that \[\phi((wx_1)A(wx_1)^{-1})=\phi(w)\phi(x_1Ax_1^{-1})\phi(w)^{-1}=\phi(w)x_1Ax_1^{-1}\phi(w)^{-1} \] as elements of $G_e$ preserve $x_1Ax_1^{-1}$. If $\phi$ is also an element of $G_{e'}$ then \[ (wx_1)A(wx_1)^{-1}= \phi(w)x_1Ax_1^{-1}\phi(w)^{-1}. \] This implies that $\phi(w)=w$ as $w \in A$. Therefore in the exact sequence \[ 1 \to A^{2N-5} \to G_e \to \Aut(A) \to 1, \] the intersection $G_e \cap G_{e'}$ projects to a subgroup of $\Aut(A)$ fixing the element $w$. However, parts 1 and 3 of Lemma~\ref{l:one-only} imply that a subgroup of $G_e \cong A^{2N-5} \rtimes \aut(A)$ of maximal product rank projects to a subgroup of $\aut(A)$ containing a nonabelian subgroup of inner automorphisms, and therefore cannot fix any $w \in A$. It follows that $G_e \cap  G_{e'}$ has product rank at most $2N-5$.
\end{proof}

\begin{proposition} \label{p:fixed_point}
Let $T$ be a collapsed rose with $N-2$ petals, and let $D$ be a direct product of $2N-3$ nonabelian free groups in $\st < \aut(F_N)$. Then $D$ has a unique global fixed point $v$ in $T$. The normalizer of $D$ in $\st$ also fixes $v$.
\end{proposition}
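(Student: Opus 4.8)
The plan is to use the structure of the stabilizer $\st$ acting on the Bass--Serre tree $T$, combined with the product rank bounds established earlier in this section. Since $D$ has product rank $2N-3$, which equals the full product rank of $\aut(F_N)$, and since the vertex stabilizers $\st_A \cong M_{2N-4}(F_2) \rtimes W_k$ have product rank $2N-3$ (by Proposition~\ref{p:stab-description-rose} and Remark~\ref{r:p_rank_mk}), the natural strategy is to show that $D$ must be elliptic, i.e. it fixes a point of $T$, and then that this fixed point is unique.

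First I would address existence of a fixed point. The key input is Lemma~\ref{l:consecutive_edges}, which says that the pointwise stabilizer of any edge path of length $\geq 2$ in $T$ has product rank at most $2N-5$. Since $D$ has product rank $2N-3$, it cannot be contained in the stabilizer of any arc of length $\geq 2$. The goal is to upgrade this to the statement that $D$ fixes a vertex. The cleanest route is to recall that a group acting on a tree either fixes a point, fixes an end, or contains a hyperbolic isometry (and hence a free group acting with no global fixed point on its axis or minimal subtree). If $D$ contained a hyperbolic element $g$, its axis would be a bi-infinite line, and commuting elements of $D$ would preserve this axis; analyzing the product structure, one would find a direct factor whose action forces large arcs to be stabilized, contradicting the product rank bound. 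I would argue that $D$ cannot act with unbounded orbits: the product rank of $D$ forces two distinct direct factors $D_i, D_j$ to stabilize overlapping long arcs (or their commuting structure to be incompatible with a hyperbolic action), and Lemma~\ref{l:consecutive_edges} rules this out. Hence $D$ is elliptic and fixes a point $v\in T$.

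Next I would establish uniqueness. If $D$ fixed two distinct vertices $v$ and $v'$, then $D$ would fix the entire geodesic arc $[v,v']$ pointwise (as $T$ is a tree and $D$ acts by isometries fixing both endpoints), which has length $\geq 1$. If this arc had length $\geq 2$, Lemma~\ref{l:consecutive_edges} would immediately give the contradiction $\pr(D)\leq 2N-5 < 2N-3$. So the only possibility is that $D$ fixes a single edge $e$ and no more; but an edge stabilizer has product rank $2N-4$ (again by Proposition~\ref{p:stab-description-rose}), which is still less than $2N-3$. Therefore $D$ must fix exactly one vertex $v$, and cannot fix any adjacent edge, giving uniqueness.

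The main obstacle I anticipate is ruling out hyperbolic behaviour cleanly, i.e. showing $D$ is genuinely elliptic rather than merely having bounded orbits on edge-stabilizer data; the product rank estimates control stabilizers of arcs but do not directly forbid a translating action, so the argument must convert the existence of a hyperbolic element into a stabilized long arc. For the final sentence about the normalizer: since $v$ is the \emph{unique} fixed point of $D$, any automorphism $\psi$ normalizing $D$ must send $v$ to another fixed point of $\psi D \psi^{-1} = D$, hence to $v$ itself by uniqueness; so the normalizer of $D$ in $\st$ fixes $v$, which I would state as an immediate consequence of the uniqueness just proved.
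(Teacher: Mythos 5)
Your uniqueness argument (two fixed vertices force a fixed edge, whose stabilizer has product rank only $2N-4$) and the deduction for the normalizer are correct and coincide with the paper's. The gaps are both in the existence part, and you partly flagged them yourself. First, the hyperbolic case. You take an arbitrary hyperbolic $g\in D$ and assert that ``commuting elements of $D$ would preserve this axis''; but the centralizer of a generic element of a direct product of nonabelian free groups is just a product of cyclic groups, so this observation has no force. The correct move (the paper's) is to locate a hyperbolic element inside a single direct factor: since a product of commuting elliptic isometries of a tree is elliptic, if every element of every $D_i$ were elliptic then every element of $D$ would be elliptic; hence if $D$ contains a hyperbolic element, some factor, say $D_{2N-3}$, contains a hyperbolic element $g$. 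Now the whole subgroup $D_1\times\cdots\times D_{2N-4}$ centralizes $g$, hence preserves the axis of $g$ together with its orientation. The orientation-preserving isometries of a line form an abelian group, so each commutator subgroup $[D_i,D_i]$ --- still a nonabelian free group --- fixes the axis pointwise. This produces a direct product of $2N-4$ nonabelian free groups fixing an arc of length $\geq 2$, contradicting Lemma~\ref{l:consecutive_edges}. This is precisely the ``conversion of a hyperbolic element into a stabilized long arc'' that you named as the main obstacle but did not carry out.

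Second, the end case is missing entirely. You state the trichotomy (global fixed point, fixed end, or hyperbolic element), rule out hyperbolic elements, and then conclude that $D$ ``is elliptic and fixes a point''. That inference is invalid: since $D$ is not assumed to be finitely generated (the paper stresses this), all elements of $D$ can be elliptic while $D$ has no global fixed point, in which case $D$ fixes an end of $T$. The paper closes this case as follows: every element of $D$ then fixes a half-line pointing towards the fixed end, so choosing finitely generated nonabelian subgroups $D_i'<D_i$ and intersecting the finitely many half-lines fixed by their generators, one obtains a direct product $D_1'\times\cdots\times D_{2N-3}'$ of nonabelian free groups fixing a half-line pointwise --- again an arc of length $\geq 2$, again contradicting Lemma~\ref{l:consecutive_edges}. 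With these two cases filled in, your outline becomes the paper's proof.
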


\begin{proof}
If a fixed vertex of $D$ exists then it is unique as stabilizers of edges have product rank $2N-4$ in $T$. By uniqueness, such a fixed point will also be invariant under the normalizer of $D$ in $\st$. We are left with the matter of proving such a fixed point exists.

Let $D=D_1 \times D_2 \times \cdots \times D_{2N-3}$. Firstly, suppose that some factor, for instance $D_{2N-3}$, contains a hyperbolic element $g$ with respect to the action on $T$. As each factor $D_i$ for $i < 2N-3$ commutes with $g$, the direct product $D_1 \times \cdots \times D_{2N-4}$ acts on the axis $A_g$ of $g$ preserving the orientation. The commutator subgroup $D_i'$ of each $D_i$ acts trivially on this axis, and we attain a direct product of $2N-4$ nonabelian free groups fixing a line in $T$. This contradicts Lemma~\ref{l:consecutive_edges}. We may therefore assume that each factor $D_i$ consists of elliptic elements. Any product of commuting elliptic elements is also elliptic, so every element of $D$ is elliptic. If $D$ does not have a global fixed point, then as in \cite[I.6.5, Exercise~2]{Serre} the group $D$ is not finitely generated and fixes an end of $T$. Every element of $D$ fixes a half-line towards this end, so that we can find a finitely generated subgroup $D_1' \times D_2' \times \cdots \times D_{2N-3}'$ with each $D_i'$ nonabelian fixing a half-line in $T$, which again contradicts Lemma~\ref{l:consecutive_edges}. Hence $D$ has a global fixed point in $T$.
\end{proof}

\subsection{Stabilizers of cages}\label{s:k-cage}

A splitting $T$ of $F_N$ is a \emph{cage} if $T$ is a free splitting and the quotient graph $T/F_N$ is isomorphic to a cage (a graph with two vertices and no loop edges). Suppose that $T/F_N$ is a cage with $k$ edges. Pick adjacent vertices $v$ and $w$ in $T$ with stabilizers $A$ and $B$. Let $e$ be the edge from $v$ to $w$ and let $e_1, \ldots, e_{k-1}$ be edges based at $v$ representing the other $k-1$ edge orbits in $T$. If $x_i$ is an element that takes the terminal vertex of $e_i$ to $w$, then $F_N$ is generated by $A$, $B$, and $x_1, \ldots, x_{k-1}$.

As above, take $G=\stab^0(T)$ and $\tilde G$ its preimage in $\aut(F_N)$. The stabilizer $\tilde G_e$ of $e$ gives an automorphic lift of $G$. As in the proof of Proposition~\ref{p:stab-description-rose}, every element $\Phi \in \Stab^0(T)$ has a unique representative $\phi \in \tilde G_e$ such that \begin{itemize}
\item $\phi$ restricts to an automorphism of $A$ and $B$.
\item For each $x_i$ we have $\phi(x_i)=a_ix_ib_i$ for some $a_i \in A$ and $b_i \in B$.
\end{itemize}

It follows that $\Stab^0(T)$ fits in the exact sequence

\[ 1\to A^{k-1} \times B^{k-1} \to \Stab^0(T) \to \aut(A) \times \aut(B) \to 1.\]

Furthermore, this sequence splits so that $\Stab^0(T)$ is a semidirect product of $\aut(A) \times \aut(B)$ with $A^{k-1} \times B^{k-1}$, where $\aut(A)$ acts diagonally on $A^{k-1}$ and trivially on $B^{k-1}$, and the action of $\aut(B)$ on $A^{k-1} \times B^{k-1}$ is given in the same fashion. In the proof of Proposition~\ref{pr:compatible}, we will use this decomposition to calculate $\rk_F(\Stab^0(T))$.

\section{Fixed splittings of subgroups with maximal product rank} \label{s:fix}

In this section, we prove the following proposition:

\begin{proposition} \label{pr:compatible}
Suppose that $G < \ia$ and $\pr(G)=2N-4$. Any splitting fixed by $G$ has one $F_N$-orbit of vertices (i.e. is a collapsed rose). Any two free splittings fixed by $G$ are compatible.
\end{proposition}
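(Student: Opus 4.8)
```latex
The plan is to prove the two assertions in sequence, deriving the compatibility of splittings from the structural rigidity forced by having product rank exactly $2N-4$. Throughout I would work with the preimage $\tilde G < \aut(F_N)$ and exploit that, by Proposition~\ref{p:stab0}, $G$ lies in $\Stab^0(T)$ for any fixed splitting $T$ and hence fixes every collapse of $T$; combined with Lemma~\ref{l:lifting_condition}, the action of $F_N$ on any such $T$ extends to $\tilde G$.

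For the first assertion, suppose $T$ is a free splitting fixed by $G$ whose quotient graph $T/F_N$ has at least two vertex orbits. The strategy is to bound $\pr(\Stab^0(T))$ from above and show it is strictly less than $2N-4$, contradicting $G < \Stab^0(T)$. The key tool is the exact-sequence calculation in the style of Section~\ref{s:k-cage}: if $T$ has $V$ orbits of vertices with stabilizers the free factors $A_1, \ldots, A_V$ of ranks $r_1, \ldots, r_V$, and $E$ orbits of edges, then $\Stab^0(T)$ fits into an exact sequence
\[ 1 \to \prod_{e} (\text{edge-transvection factors}) \to \Stab^0(T) \to \prod_{j} \aut(A_j) \to 1, \]
and by Lemma~\ref{l:product_rank_exact_sequences} the product rank is subadditive across it. Since $\rk_F(\aut(F_2))=1$ (Corollary~\ref{c:only-tau}(2)) and more generally $\rk_F(\aut(F_{r}))$ is controlled by $2r-3$, and since having more than one vertex forces each $r_j < N$, the arithmetic of summing $\sum_j \rk_F(\aut(A_j))$ plus the transvection contributions falls strictly below $2N-4$ once the rank is distributed across two or more vertex groups. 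The single-vertex collapsed rose is precisely the boundary case where equality $2N-4$ is achieved, so any fixed splitting must have exactly one vertex orbit.

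For the second assertion, let $T$ and $T'$ be two free splittings fixed by $G$; by the first part both are collapsed roses. The plan is to pass to a common refinement obtained from a folding path or from the fact that both lie in a common deformation space. Since $G < \Stab^0(T) \cap \Stab^0(T')$ fixes both splittings and hence every collapse of each, any tree $\hat T$ lying over both in a common refinement is itself a free splitting with product-rank-$(2N-4)$ stabilizer containing $G$. Applying the first assertion to $\hat T$ forces it to be a collapsed rose as well; but a refinement of two distinct collapsed roses that is itself a collapsed rose can only exist if the two roses already coincide up to the action, which is exactly the statement that $T$ and $T'$ share a refinement, i.e.\ are compatible. Concretely I would invoke the Guirardel--Horbez analysis alluded to in the introduction to produce the refinement inside the common deformation space.

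The main obstacle I expect is the product-rank bookkeeping in the first assertion: one must show rigorously that distributing the ``budget'' $2N-4$ across a multi-vertex graph of free factors, while accounting for the edge-transvection subgroups (each contributing product rank equal to the rank of the adjacent vertex group), strictly undershoots $2N-4$ unless there is a single vertex. This requires carefully tracking how $\rk_F(\aut(A_j))$ and the transvection factors interact, and in particular handling the degenerate possibility that some vertex group has rank $\le 1$ (so its $\aut$ contributes nothing to product rank) separately. The delicate point is ensuring the bound is \emph{strict}, which is where the special behaviour of $\aut(F_2)$ — that its product rank is $1$ rather than $2\cdot 2 - 3 = 1$, coincidentally the same — and the precise Euler-characteristic constraint $\sum_j r_j - \sum_e 1 + (\text{rank contributions}) = N$ must be combined to rule out all multi-vertex configurations at once.
```
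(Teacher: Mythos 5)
Your first assertion follows the paper's own route: the paper likewise bounds the product rank of the stabilizer of any splitting with at least two vertex orbits (Lemma~\ref{l:counting}), first using Proposition~\ref{p:stab0} to replace the splitting by a collapse with exactly two vertex orbits and then by a cage, and then running exactly the kind of case analysis you describe, including the degenerate cases where a vertex group is cyclic or trivial. What you sketch is the right strategy, though you leave the decisive bookkeeping (which you correctly flag as the hard part) undone; the reduction to a two-vertex cage is what makes that bookkeeping tractable.

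The second assertion is where your proposal has a genuine gap: it is circular. You reason about ``any tree $\hat T$ lying over both in a common refinement'' --- but the existence of such a $\hat T$ is precisely the definition of compatibility, which is what you are trying to prove. Your fallback, invoking Guirardel--Horbez to ``produce the refinement inside the common deformation space,'' does not repair this: their result, as used in the paper, says only that two $G$-unrefinable free splittings have the same elliptic subgroups, i.e.\ lie in a common deformation space, and trees in a common deformation space are in general pairwise \emph{incompatible} (think of two distinct roses in Outer space). The idea you are missing is the paper's Lemma~\ref{l:boundary_sphere_lemma}: assuming $T$ and $T'$ are incompatible, one takes a $\tilde G$-equivariant morphism $f\colon T \to T'$ (this is what the common deformation space actually buys you), factors it as a collapse followed by a morphism collapsing no edge; incompatibility forces this morphism to have a genuine fold, and performing a small equivariant partial fold produces a new $G$-invariant \emph{free} splitting with an extra orbit of vertices. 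That tree contradicts the counting bound of the first assertion, so $T$ and $T'$ must have been compatible after all. In other words, compatibility is deduced by contradiction through a folding construction, not by directly exhibiting a common refinement; without that construction your second part does not get off the ground.
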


Proposition~\ref{pr:compatible} follows from two lemmas that will be proved below. Lemma~\ref{l:boundary_sphere_lemma} asserts that if a subgroup $G < \ia$ fixes two incompatible free splittings, then $G$ fixes a free splitting with at least two $F_N$-orbits of vertices. Lemma~\ref{l:counting} then shows that such a splitting cannot be fixed by a group with maximal product rank (in other words, subgroups with maximal product rank can only fix collapsed roses). We delay the full proof for the time being to first state two consequences of this proposition.

We say that a free splitting $T$ is $G$-unrefinable if it admits no $G$-invariant refinement that is a free splitting. We have the following useful corollary:

\begin{corollary} \label{c:max_unrefinable}
Suppose $G < \ia$ and $\pr(G)=2N-4$.
\begin{itemize}
 \item The group $G$ fixes a unique $G$-unrefinable free splitting. If this splitting is nontrivial then it is a collapsed rose.
 \item If $T$ is the unique $G$-unrefinable collapsed rose fixed by $G$, then $T$ is also fixed by the normalizer $N_{\Out(F_N)}(G)$ of $G$ in $\Out(F_N)$.
\end{itemize}
\end{corollary}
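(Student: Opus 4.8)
The plan is to deduce both bullets from Proposition~\ref{pr:compatible} together with the elementary finiteness of free splittings of $F_N$. For the first bullet I would separate existence from uniqueness. \emph{Existence} is soft: the trivial splitting (a single point, with vertex group $F_N$) is fixed by $G$, and the number of $F_N$-orbits of edges in a free splitting of $F_N$ is bounded above (the trees carry no valence-two vertices), so any chain of proper $G$-invariant refinements by free splittings terminates. Starting from the trivial splitting and refining $G$-equivariantly for as long as possible therefore produces a $G$-unrefinable free splitting fixed by $G$. \emph{Uniqueness} is where Proposition~\ref{pr:compatible} does the real work: if $T_1$ and $T_2$ are two $G$-unrefinable free splittings fixed by $G$, that proposition says they are compatible, so they admit a common refinement. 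I would take the \emph{canonical} common refinement $\hat T$ — concretely, the union of the two disjointly realized sphere systems, as in the proof of Lemma~\ref{l:boundary_sphere_lemma} — which is again a free splitting. Because $\hat T$ is determined functorially by the pair $(T_1,T_2)$, every element of $G$ (fixing both $T_1$ and $T_2$) also fixes $\hat T$; so $\hat T$ is a $G$-invariant free splitting refining $T_1$, and $G$-unrefinability of $T_1$ forces $\hat T = T_1$. Symmetrically $\hat T = T_2$, whence $T_1 = T_2$. The final clause is then immediate, since by Proposition~\ref{pr:compatible} every splitting fixed by $G$ has a single $F_N$-orbit of vertices, so a nontrivial $G$-unrefinable splitting is a collapsed rose.

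For the second bullet, let $T$ be the unique $G$-unrefinable collapsed rose and let $\phi \in N_{\Out(F_N)}(G)$; the idea is that the $\Out(F_N)$-action permutes the $G$-unrefinable fixed splittings among themselves, so uniqueness pins $T$ down. Writing the action on the right as in Section~\ref{s:splittings_background}, a one-line conjugation computation shows $T\cdot\phi$ is fixed by $G$: for $g\in G$ one has $(T\cdot\phi)\cdot g = (T\cdot(\phi g\phi^{-1}))\cdot\phi = T\cdot\phi$, using $\phi g\phi^{-1}\in G$ and $G$-invariance of $T$. The same computation applied to refinements — together with the fact that the $\Out(F_N)$-action preserves the collapse/refinement relation — shows $T\cdot\phi$ is itself $G$-unrefinable, since a $G$-invariant free refinement of $T\cdot\phi$ would, after applying $\phi^{-1}$, yield a $G$-invariant free refinement of $T$. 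Thus $T\cdot\phi$ is a $G$-unrefinable free splitting fixed by $G$, and the uniqueness just proved gives $T\cdot\phi = T$. As $\phi$ was arbitrary, $N_{\Out(F_N)}(G)$ fixes $T$.

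The main obstacle is the uniqueness step, and specifically the claim that two compatible $G$-invariant free splittings admit a common refinement that is \emph{simultaneously} a free splitting and $G$-invariant: bare compatibility only furnishes \emph{some} common refinement. What rescues the argument is canonicity, which I would extract from the sphere-system model already in play for Lemma~\ref{l:boundary_sphere_lemma}, namely that the union of two disjointly realized sphere systems is a sphere system whose isotopy class depends only on those of the two systems. Everything else is formal manipulation of the refinement partial order and the right $\Out(F_N)$-action.
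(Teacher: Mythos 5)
Your proposal is correct in substance and rests on the same key mechanism as the paper --- compatibility via Proposition~\ref{pr:compatible} plus a \emph{canonical} common refinement --- but it is organized genuinely differently. The paper argues globally: it takes the set $X$ of \emph{all} one-edge free splittings fixed by $G$, notes they are pairwise compatible by Proposition~\ref{pr:compatible} (hence $X$ is finite), forms the canonical common refinement $T$ of the whole family, citing Scott--Swarup (Theorem~5.16) and Guirardel--Levitt (Proposition~A.17) for its existence and canonicity, and then uses Proposition~\ref{p:stab0} (this is where $G<\ia$ enters) to show that every $G$-invariant free splitting collapses onto elements of $X$; from this, $T$ refines every $G$-invariant free splitting, which gives existence and uniqueness simultaneously. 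Your route is local: existence by a terminating chain of proper $G$-invariant refinements (valid, since a proper refinement strictly increases the number of edge orbits, which is uniformly bounded for free splittings of $F_N$), and uniqueness by applying the canonical common refinement to just the two unrefinable trees $T_1,T_2$. This buys a real economy --- you never need Proposition~\ref{p:stab0} or the finiteness of $X$ --- while the paper's version yields the stronger intermediate fact that every $G$-invariant free splitting is a collapse of the single tree $T$. One caveat you should fix: you ground the crucial canonicity claim in ``the union of disjointly realized sphere systems, as in the proof of Lemma~\ref{l:boundary_sphere_lemma}'', but that lemma's proof in the paper uses deformation spaces and folding paths, not sphere systems, so it cannot be cited for this. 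The fact you need --- that compatible free splittings admit a canonical, hence equivariant, least common refinement which is again a free splitting --- is precisely what the paper imports from the external references above; substituting one of those citations for your sphere-system remark makes your argument complete. Your normalizer step is the paper's one-line appeal to uniqueness, written out in full, and is fine.
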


\begin{proof}
Take $X$ to be the set of all one-edge free splittings preserved by $G$. These are all compatible by Proposition~\ref{pr:compatible}, so $X$ is finite and there is a common refinement $T$ collapsing to every element of $X$ (\cite[Theorem~5.16]{MR2032389} or \cite[Proposition~A.17]{GL-jsj}). Also by Proposition~\ref{pr:compatible}, the tree $T$ is a collapsed rose. Any other $G$-invariant free splitting is a common refinement of a subset of $X$ (this follows from the fact that $G$ is contained in $\ia$ and Proposition~\ref{p:stab0}: if $G$ fixes a free splitting $T'$ then $G$ also fixes all of the one-edge splittings to which $T'$ collapses). It follows that $T$ is the unique $G$-unrefinable, $G$-invariant free splitting. By uniqueness, $T$ is invariant under the normalizer of $G$ in $\out(F_N)$.
\end{proof}

\begin{corollary} \label{c:max_rose}
Let $G < \ia$ be such that $\rk_F(G)=2N-4$. If $G$ fixes the Bass--Serre tree $T$ of a collapsed rose with $N-2$ petals, then this rose is unique and the normalizer $N(G)$ of $G$ in $\Out(F_N)$ also fixes $T$.
\end{corollary}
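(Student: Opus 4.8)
The plan is to deduce the statement from Corollary~\ref{c:max_unrefinable} by showing that the given tree $T$ is itself $G$-unrefinable. Once this is established, both conclusions follow at once: Corollary~\ref{c:max_unrefinable} asserts that $G$ fixes a \emph{unique} $G$-unrefinable free splitting and that the normalizer $N(G)$ fixes it, so our $T$ must be that splitting and is therefore preserved by $N(G)$. Uniqueness of the collapsed rose with $N-2$ petals also follows: any such rose fixed by $G$ would, by the argument below, again be $G$-unrefinable, hence equal to $T$.

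The crux is thus to prove that $T$ admits no proper $G$-invariant refinement among free splittings. Suppose $T'$ is a $G$-invariant free splitting refining $T$. Since $\pr(G)=2N-4$ and $G<\ia$, Proposition~\ref{pr:compatible} tells us that $T'$ has a single $F_N$-orbit of vertices, so it is again a collapsed rose. If $T'\neq T$, then the collapse $T'\to T$ is nontrivial, so it strictly decreases the number of edge orbits; hence $T'$ has $k\ge N-1$ petals, and its vertex group $A'$ is a free factor of rank at most $1$, i.e.\ $A'$ is cyclic. Because $G<\ia$ fixes $T'$, Proposition~\ref{p:stab0} gives $G<\Stab^0(T')$, and the automorphic lift of Proposition~\ref{p:automorphic-lift} provides, for any edge $e$ of $T'$, an isomorphism $G\cong\widetilde G_e$ with $\widetilde G_e<\widetilde{\Stab}^0(T')_e$. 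By Proposition~\ref{p:stab-description-rose} this edge stabilizer is isomorphic to $M_{2k-1}(A')\cong A'^{\,2k-1}\rtimes\Aut(A')$, which is virtually abelian and therefore has product rank $0$ as $A'$ is cyclic. This forces $\pr(G)=0$, contradicting $\pr(G)=2N-4\ge 2$ (recall $N\ge 3$). Hence no proper refinement exists and $T$ is $G$-unrefinable.

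Assembling the pieces completes the argument. The main obstacle is the bookkeeping in the second paragraph: one must verify that a nontrivial collapse of collapsed roses strictly decreases the number of petals (equivalently, that a collapsed rose refining another with the same number of petals is equivalent to it), and that a rose with at least $N-1$ petals genuinely has a vertex group of rank $\le 1$, so that Proposition~\ref{p:stab-description-rose} delivers a virtually abelian edge stabilizer. With $T$ shown to be $G$-unrefinable, the uniqueness of $T$ and the invariance under $N(G)$ are then formal consequences of Corollary~\ref{c:max_unrefinable}.
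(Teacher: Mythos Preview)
Your proof is correct and follows essentially the same approach as the paper: show that $T$ is $G$-unrefinable because any proper refinement would be a collapsed rose (via Proposition~\ref{pr:compatible}) with at least $N-1$ petals, whose stabilizer is virtually abelian by Proposition~\ref{p:stab-description-rose}, and then invoke Corollary~\ref{c:max_unrefinable}. The paper is slightly terser---it handles the $N-1$ and $N$ petal cases separately and does not route through the automorphic lift---but the logic is the same.
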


\begin{proof}
A $G$-invariant refinement of such a rose would have to have $N-1$ or $N$ petals. However it follows from Proposition~\ref{p:stab-description-rose} that the stabilizer of a collapsed rose with $N-1$ petals is virtually abelian and the stabilizer of a rose with $N$ petals lies in the interior of Outer space and is finite. Hence if $G$ fixes a collapsed rose with $N-2$ petals then this rose is $G$-unrefinable and invariant under the normalizer of $G$ in $\out(F_N)$.
\end{proof}

We move on to the required lemmas.

\begin{lemma}\label{l:boundary_sphere_lemma}
If $G < \ia$ fixes incompatible free splittings $T$ and $T'$ then  $G$ fixes a free splitting which contains at least two orbits of vertices.
\end{lemma}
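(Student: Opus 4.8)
The plan is to translate the statement into the topology of sphere systems in the doubled handlebody $M_N = \#_N\, S^1\times S^2$, where free splittings of $F_N$ correspond to isotopy classes of essential sphere systems, a one-edge splitting corresponds to a single essential sphere, the number of $F_N$-orbits of vertices equals the number of complementary components of the system, and two splittings are compatible precisely when the corresponding systems can be realised disjointly. By Laudenbach's theorem, $\out(F_N)$ is realised by isotopy classes of diffeomorphisms of $M_N$, so the action of $G$ on $\FS$ is the action on sphere systems induced by diffeomorphisms; this is what will let us promote a canonical topological construction to a $G$-invariant one.

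First I would reduce to the case of one-edge splittings. Since $G<\ia$, Proposition~\ref{p:stab0} shows that $G$ fixes every collapse of a fixed splitting, and in particular every one-edge collapse. Using Hatcher's normal form — a family of spheres can be realised pairwise disjointly as soon as each pair of its members can — incompatibility of $T$ and $T'$ forces some one-edge collapse of $T$, represented by an essential sphere $a$, and some one-edge collapse of $T'$, represented by an essential sphere $b$, to be incompatible; for otherwise the two systems would be simultaneously disjointly realisable and hence admit a common refinement. Both $a$ and $b$ are fixed by $G$, so we may assume from now on that $a$ and $b$ are essential spheres that cannot be isotoped apart.

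Next, put $a\cup b$ in minimal position. Minimal position is canonical up to isotopy, so the nonempty intersection pattern $a\cap b$ is preserved up to isotopy by every diffeomorphism representing an element of $G$. I would then surger $a$ along the innermost disks cut out by $b$, obtaining a sphere system $\Sigma$ that is disjoint from $b$ and built canonically from the configuration $a\cup b$; consequently $\Sigma\cup b$ is a $G$-invariant sphere system. Minimality of the intersection guarantees that no surgery sphere bounds a ball (such a trivial sphere could be used to reduce $|a\cap b|$), so $\Sigma\cup b$ is essential. Its associated free splitting is therefore fixed by $G$, and it remains only to show that this splitting has at least two $F_N$-orbits of vertices.

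The main obstacle is precisely this last point, since a sphere system with several spheres can still have connected complement (a multi-petalled rose, with a single vertex orbit) rather than two vertex orbits. To settle it I would analyse the complementary regions of the minimal-position configuration directly: cutting $M_N$ along $b$ turns $a$ into a nonempty family of properly embedded planar surfaces whose boundary circles lie on the two copies $b^{+},b^{-}$ of $b$, and capping these circles off by disks in $b^{\pm}$ produces the spheres of $\Sigma$. Tracking which copy of $b$ each cap abuts should exhibit a complementary region of $\Sigma\cup b$ that is genuinely separated from the region containing the original picture by the sphere $b$, forcing the dual graph to have at least two vertices; equivalently, because $a$ and $b$ truly cross, no single complementary component can meet both sides of every sphere of the surgered system. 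Verifying this combinatorial separation is the crux of the argument, after which we obtain a $G$-fixed free splitting with at least two orbits of vertices, as required. This is exactly the input Lemma~\ref{l:counting} needs to complete the proof of Proposition~\ref{pr:compatible}.
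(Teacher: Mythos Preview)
Your approach via sphere systems in $M_N$ is genuinely different from the paper's, and it is the natural thing to try given the lemma's label. The paper instead works entirely in the language of trees: it replaces $T,T'$ by $G$-unrefinable free splittings, invokes Guirardel--Horbez to see that these lie in the same deformation space of $\tilde G$-trees, and then observes that a nontrivial folding move between them (which must exist by incompatibility) creates an extra $\tilde G$-orbit of vertices while keeping $F_N$-edge-stabilisers trivial. No surgery, no surfaces; the extra vertex orbit comes for free from the fold. This sidesteps precisely the step you flag as the crux.

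Your argument has a genuine gap at that crux. You assert that $\Sigma\cup b$ has at least two complementary components, but the heuristic you offer (``no single complementary component can meet both sides of every sphere'') is not a proof, and I do not see how to make it one without substantially more work. When $b$ is nonseparating, the surgered spheres $s_i$ all live in the connected manifold $W=M_N\smallsetminus b$, and there is no evident obstruction to each $s_i$ being nonseparating in $W$ in such a way that $W\smallsetminus\Sigma$ remains connected; after regluing $b^{+}$ to $b^{-}$ the complement of $\Sigma\cup b$ would then still be connected. Your ``tracking which copy of $b$ each cap abuts'' does not by itself force a separation, because caps on $b^{+}$ and caps on $b^{-}$ can easily lie in the same component of $W\smallsetminus\Sigma$. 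A secondary concern is canonicality: iterated innermost-disk surgery involves choices of innermost circle at each stage, and you need to check (or replace the construction, e.g.\ by $\partial N(a\cup b)$) so that the end result depends only on the isotopy class of the pair, which is what makes it $G$-invariant. Even granting that, you would still need to control what happens when spheres of $\Sigma$ turn out parallel to $b$ or to one another; discarding these can collapse complementary regions. The paper's folding argument avoids all of this bookkeeping.
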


\begin{proof}
As refinements of incompatible trees are also incompatible, we may assume that the two splittings $T$ and $T'$ are incompatible and $G$-unrefinable. Let $\tilde G$ be the preimage of $G$ in $\aut(F_N)$. By Proposition~6.2 of \cite{GHmeasure}, any two $G$-unrefinable free splittings belong to the same deformation space when viewed as $\tilde G$-trees (equivalently, $T$ and $T'$ have the same set of elliptic subgroups with respect to the $\tilde G$-action). However, if there are incompatible trees in a deformation space then there must be trees with at least two orbits of vertices. We give a brief explanation via folding: As $T$ and $T'$ belong to the same deformation space, there exists a $\tilde G$ equivariant map $f:T \to T'$ taking edges in $T$ to (possibly trivial) edge paths in $T'$ (see, e.g., \cite{MR2319455}).  The map $f$ decomposes as a collapse $f_0:T \to T_0$ followed by a morphism $f_1: T_0 \to T_1$ that does not collapse edges. The map $f_1$ is nontrivial as $T$ and $T'$ are incompatible, which implies there exist edges $e \neq e'$ based at the same vertex in $T_0$ such that the edge paths $f(e)$ and $f(e')$ have a common initial egde (if $f_1$ were locally injective, then $f_1$ would be an isomorphism). Partially folding $e$ and $e'$ along a small initial segment gives a new $\tilde G$ tree $T_1$ with an extra $\tilde G$-orbit of vertices at the fold. There is an induced morphism $f_2 : T_1 \to T'$, which implies that any edge stabilizer of $T_1$ fixes a nondegenerate arc in $T'$. Hence $F_N < \tilde G$ has no nontrivial edge stabilizer in $T_1$, and $T_1$ is a $G$-invariant free splitting with at least two orbits of edges.
\end{proof}

We now show that the stabilizer of any splitting with more than one orbit of vertices does not have maximal product rank. Proposition~\ref{pr:compatible} follows immediately.

\begin{lemma}\label{l:counting}
Let $T$ be a splitting of $F_N$ which contains at least two $F_N$-orbits of vertices. Then $\pr(\stab_{\Out(F_N)}(T)) \leq 2N-5$.
\end{lemma}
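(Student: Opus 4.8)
The plan is to bound the product rank of $\Stab_{\Out(F_N)}(T)$ by controlling the structure of $G := \Stab^0(T)$, the finite-index subgroup acting trivially on the quotient graph $T/F_N$, and then invoking Lemma~\ref{l:product_rank_exact_sequences} to pass between $G$ and the full stabilizer. Since $T$ has at least two $F_N$-orbits of vertices, the quotient graph of groups has at least two vertices, with vertex groups that are proper free factors $A_1, \dots, A_r$ (where $r \geq 2$) of total rank strictly less than $N$. The key structural input is that $\Stab^0(T)$ fits into an exact sequence analogous to the cage computation in Section~\ref{s:k-cage}: there is a map
\[
\Stab^0(T) \longrightarrow \prod_{j} \Aut(A_j)
\]
whose kernel is generated by the ``twists'' along edges, i.e.\ a product of copies of the vertex groups indexed by the edges of the graph. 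I would first make this exact sequence precise using the automorphic-lift machinery of Section~\ref{s:automorphic_lifts} and Bass--Serre theory.

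The heart of the argument is a counting estimate on product ranks via the subadditivity from Lemma~\ref{l:product_rank_exact_sequences}. The product rank of the kernel is governed by the ranks of the free factors $A_j$ appearing as vertex groups: each twisting factor is a copy of some $A_j$, contributing $\pr \leq \rk(A_j)$ (a free group of rank $m$ contains a direct product of $\lfloor m/2 \rfloor$ nonabelian free groups, but for the crude upper bound $\pr(F_m) \leq m-1$ suffices, and I only need an upper bound). The product rank of $\prod_j \Aut(A_j)$ is $\sum_j \pr(\Aut(A_j)) = \sum_j (2\,\rk(A_j) - 3)$ by Proposition~\ref{p:product_rank_aut_out}, with the convention that rank-one factors contribute nothing. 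Assembling these via subadditivity gives an upper bound for $\pr(\Stab^0(T))$ in terms of the ranks of the vertex groups and the number of edges, and I then verify by an Euler-characteristic bookkeeping argument (using that the total rank of $F_N$ is recovered from the vertex-group ranks plus the first Betti number of the quotient graph, i.e.\ $N = \sum_j \rk(A_j) + b_1(T/F_N)$, together with $\#\text{edges} - \#\text{vertices} + 1 = b_1$) that every configuration with at least two vertices yields a total strictly below $2N-4$, hence at most $2N-5$.

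The main obstacle I anticipate is making the counting estimate uniformly sharp across all the combinatorial types of the underlying graph of groups: one must show that the ``savings'' created by splitting $F_N$ into two or more vertex groups always outweighs the twisting contributions from the edges, and that the extremal case (the collapsed rose, with a single vertex) is exactly the one excluded by hypothesis. Concretely, the rose with $N-2$ petals has $\pr = 2N-3$ in $\aut$ and $2N-4$ in $\out$, so moving any of that rank into a second vertex group must be shown to cost at least one unit of product rank. I would organize this by introducing the quantity $\sum_j (2\,\rk(A_j)-3)$ over vertices with $\rk(A_j)\geq 2$ plus twist contributions, and compare it against $2N-4$ using the constraint relating vertex ranks and graph topology; the delicate point is correctly handling rank-one (and trivial) vertex groups, since $\Aut$ of a rank-one free group is finite and contributes nothing to product rank, so these vertices can only help the inequality. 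Once the arithmetic is set up, the inequality $\pr(\Stab^0(T)) \leq 2N-5$ should follow mechanically, and the bound for the full stabilizer follows since it contains $\Stab^0(T)$ with finite index (Lemma~\ref{l:product_rank_exact_sequences}).
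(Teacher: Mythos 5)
Your overall skeleton (realize $\Stab^0(T)$ as an extension of a product of automorphism groups of vertex groups by a twist group, then apply subadditivity from Lemma~\ref{l:product_rank_exact_sequences} and count) is the same strategy as the paper's, but your proposal contains a genuine mathematical error that breaks the counting. You assert that ``a free group of rank $m$ contains a direct product of $\lfloor m/2 \rfloor$ nonabelian free groups'' and propose to bound each twisting factor by $\pr(F_m) \leq m-1$. The first statement is false and the second bound is far too weak: a free group contains \emph{no} direct product of two nonabelian free groups at all, because the centralizer of any nontrivial element of a free group is cyclic; hence $\pr(F_m)=1$ for every $m \geq 2$. This is not a pedantic point --- it is the crux of the whole count. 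With your bound of $\rk(A_j)-1$ per twist factor, the ``mechanical'' inequality you promise actually fails: take the cage with $k=2$ edges and two vertex groups of rank $3$, so $N=7$. The kernel of the map to $\Aut(A)\times\Aut(B)$ is $A \times B$, which your bound estimates by $2+2=4$, while the quotient $\Aut(F_3)\times\Aut(F_3)$ contributes $3+3=6$ (Proposition~\ref{p:product_rank_aut_out}); the total is $10=2N-4$, which does not prove the lemma. With the correct value $\pr(A)=\pr(B)=1$ the total is $8=2N-6$, as needed. The reason the lemma is true at all is precisely that twist factors are cheap (product rank $1$ each, independent of their rank) while each nonabelian vertex group pays back $2\rk(A_j)-3$ through its automorphism group; your inflated estimate of the twists destroys this trade-off.

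A second, smaller problem: your remark that rank-$\leq 1$ vertex groups ``can only help the inequality'' is backwards --- these are exactly the tight cases. For a cage with $k$ edges and one trivial vertex group, the (correct) count gives $\pr \leq 2N-2-k$, which is $\leq 2N-5$ only because a vertex of $T$ with trivial stabilizer must have valence at least $3$ (valence $1$ is excluded by minimality, valence $2$ by the paper's convention on trees), forcing $k \geq 3$; with $k=2$ the count would reach $2N-4$. So degenerate vertex groups require a graph-theoretic input, not a shrug. Finally, note that the paper sidesteps the general structure theory your route would require for an arbitrary quotient graph: assuming $\pr(\Stab(T))=2N-4$, it intersects a maximal direct product with $\ia$, invokes Proposition~\ref{p:stab0} to see that this product preserves every collapse of $T$, and collapses down to a two-vertex splitting and then a cage, for which the exact sequence was already set up in Section~\ref{s:k-cage}; only three cases of arithmetic remain. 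Working with the general graph directly, as you propose, is viable, but it obliges you to prove the extension structure of $\Stab^0(T)$ for arbitrary free splittings on top of the corrections above.
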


\begin{proof}
Suppose for a contradiction that $\rk_F(\stab(T))=2N-4$. Then the intersection of $\stab(T)$ with the finite index subgroup $\ia < \out(F_N)$ also contains a direct product $D$ of $2N-4$ nonabelian free groups, and $D$ preserves every collapse of $T$ (Proposition~\ref{p:stab0}). We may therefore replace $T$ with a collapse $T'$ that has exactly two orbits of vertices $v$ and $w$.  Collapsing all loop edges in the quotient graph then gives us a cage $S$ with $k \geq 1$ edges (allowing the degenerate case of a one-edge separating splitting). As $D < \ia$, we have $D < \Stab^0(S)$. If $A$ and $B$ are free factor representatives of the vertex stabilizers in $S$ then, as in Section~\ref{s:k-cage}, the group $\Stab^0(S)$ decomposes as a short exact sequence \[ 1\to A^{k-1} \times B^{k-1} \to \Stab^0(S) \to \aut(A) \times \aut(B) \to 1.\] Suppose $A$ and $B$ are of rank $n$ and $m$ respectively, so that $N = n+m+(k-1)$. If $A$ and $B$ are both noncyclic, then \[ \pr(\aut(A))+\pr(\aut(B))= (2n-3) + (2m-3) =2N-2k-4 \] and \[ \pr(A^{k-1} \times B^{k-1}) = 2k-2. \] Hence $\pr(\stab^0(S)) \leq (2N-2k-4 ) +  2k-2 =2N-6 $. If both $A$ and $B$ are cyclic (possibly trivial), then $\stab^0(S)$ is virtually abelian, so $\pr(\stab^0(S))=0$. Suppose $A$ is nonabelian and $B$ is cyclic. Then $\pr(A^{k-1} \times B^{k-1})=k-1$. If $B$ is trivial then $A$ is rank $N-k+1$ and the vertex $w$ has valence at least 3, so that $k \geq 3$. Then $\pr(\aut(A) \times \aut(B))=\pr(\aut(A)) \leq 2(N-k+1)-3$. Hence \[ \pr(\Stab^0(S)) \leq 2(N-k+1)-3 + k-1 = 2N -2 -k \leq 2N-5. \] Similarly, if $B$ is infinite cyclic then $A$ is rank $N-k$ and the same computation gives \[ \pr(\Stab^0(S)) \leq 2N-4-k \leq 2N-5. \]
\end{proof}

\section{Completing the proofs of theorems A and B} \label{s:main_thm}

We are now armed with enough knowledge to complete the proof of Theorem~A.

\begin{theorem} [Theorem~A]
Let $N \geq 3$ and suppose \[D=D_1 \times D_2 \times \cdots \times D_{2N-4} < \out(F_N) \] is a direct product of $2N-4$ nonabelian free groups. Then, in the boundary of Outer space, there is a unique collapse rose with $N-2$ petals that is fixed by $D$.  \end{theorem}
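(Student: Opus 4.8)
The plan is to separate the existence of the fixed collapsed rose from its uniqueness, since the excerpt has already assembled the tools for both. I may immediately reduce to the case $D < \ia$: by Proposition~\ref{p:product_rank_aut_out} the product rank is maximal, and by passing to the finite-index subgroup $D \cap \ia$ (which has the same product rank by Lemma~\ref{l:product_rank_exact_sequences}) I lose nothing for the existence statement, while Proposition~\ref{p:stab0} guarantees that whatever splitting this subgroup fixes is also fixed by the original $D$. The starting point is Theorem~\ref{t:preserved_splitting}: $D$ fixes a one-edge nonseparating free splitting $T_0$, i.e. the Bass--Serre tree of a loop with trivial edge group and a single vertex group $A_0 \cong F_{N-1}$.

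For existence, the strategy is an induction on $N$ implemented through a blow-up of the $\widetilde D$-action on $T_0$. By Lemma~\ref{l:lifting_condition}, fixing $T_0$ means the $F_N$-action extends to an action of the preimage $\widetilde D < \aut(F_N)$ on $T_0$; by Proposition~\ref{p:automorphic-lift} the edge stabilizer $\widetilde D_e$ maps isomorphically to $D$, and the vertex stabilizer $\widetilde D_{v}$ of a vertex $v$ with $F_N$-stabilizer $A_0 \cong F_{N-1}$ is a group whose image in $\out(A_0) = \out(F_{N-1})$ has product rank $2N-4 = 2(N-1)-4 + 2$. The key point, as flagged in the introduction, is to arrange that the stabilizer of $v$ acts on a collapsed rose with $N-3$ petals coming from the inductive hypothesis applied inside $A_0$, and crucially that the edge stabilizers $\widetilde D_e$ are elliptic in this new action. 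Once I have an elliptic, $\widetilde D_v$-invariant collapsed rose at each vertex with the edge groups fixing a point, I assemble these equivariantly into a graph of actions in the sense of Levitt \cite{Lev2}: the vertex actions on their $(N-3)$-petal roses are glued along the single orbit of edges of $T_0$, producing a $\widetilde D$-tree that is itself a collapsed rose with $(N-3)+1 = N-2$ petals. By Lemma~\ref{l:lifting_condition} again, this blown-up tree descends to a collapsed rose with $N-2$ petals fixed by $D$.

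For uniqueness, I invoke Proposition~\ref{pr:compatible} together with Corollary~\ref{c:max_rose}. Since $\pr(D)=2N-4$, Corollary~\ref{c:max_rose} says that any collapsed rose with $N-2$ petals fixed by $D$ is automatically $G$-unrefinable (a refinement would force an $(N-1)$- or $N$-petal rose, whose stabilizer is too small by Proposition~\ref{p:stab-description-rose}). Corollary~\ref{c:max_unrefinable} then asserts that $D$ fixes a \emph{unique} $G$-unrefinable free splitting. Hence any two collapsed roses with $N-2$ petals fixed by $D$ coincide with this canonical unrefinable splitting, giving uniqueness. Alternatively, one can argue directly as sketched in the introduction: by Guirardel--Horbez \cite[Section~6]{GHmeasure} two such fixed roses lie in a common deformation space as $\widetilde D$-trees, so a folding path between them would be $D$-invariant; but every intermediate tree has more than one orbit of vertices, contradicting Proposition~\ref{pr:compatible}, so the folding path is trivial and the two roses agree.

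The main obstacle is the existence argument, and specifically the ellipticity of the edge stabilizers $\widetilde D_e$ in the blown-up vertex action. The inductive hypothesis supplies a collapsed rose with $N-3$ petals inside each vertex group, but I must verify that the adjacent edge groups $\widetilde D_e$ act on this rose with a global fixed point, so that the gluing of Levitt's graph of actions is coherent and produces a genuine free splitting rather than something with nontrivial edge stabilizers. This is exactly where the arc-stabilizer analysis of Section~\ref{s:4}, in particular Proposition~\ref{p:fixed_point}, does the work: a direct product of $2N-3$ nonabelian free groups inside the stabilizer of a collapsed rose has a unique global fixed point in its Bass--Serre tree, which pins down both the ellipticity and the compatibility of the gluing data. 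Managing the bookkeeping so that the product rank accounting matches the inductive step, and that the resulting tree genuinely has $N-2$ petals with the correct trivial edge groups, is the delicate part of the proof.
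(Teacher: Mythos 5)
Your outline follows the paper's own proof almost step for step: reduce to $\ia$, induct on $N$ with base case Theorem~\ref{t:preserved_splitting}, blow up the one-edge nonseparating splitting via Levitt's graph of actions with Proposition~\ref{p:fixed_point} supplying ellipticity of the edge groups, and deduce uniqueness from Proposition~\ref{pr:compatible} together with Corollaries~\ref{c:max_unrefinable} and~\ref{c:max_rose}. Two points, however, are wrong or missing as written. First, the reduction to $D<\ia$ cannot be justified by Proposition~\ref{p:stab0}: that proposition assumes the acting group lies in $\ia$, and it is precisely $D$ (not $D\cap\ia$) that fails this hypothesis. The paper's argument is different: $D'=\prod_i(D_i\cap\ia)$ is \emph{normal} in $D$, the collapsed rose fixed by $D'$ is \emph{unique} (Corollary~\ref{c:max_rose}), and a unique fixed object of a normal subgroup is invariant under the normalizer, hence under $D$. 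In particular, uniqueness is not merely the second half of the theorem; it is needed inside the existence argument (and again inside the induction).

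Second, the ``bookkeeping'' you defer is exactly where the essential mechanism of the proof lives, and one of your statements there is false: the image of $D$ in $\out(A_0)\cong\out(F_{N-1})$ cannot have product rank $2N-4$, since $\pr(\out(F_{N-1}))=2N-6$. What is true, via the exact sequence $1\to A_0\times A_0\to\Stab^0(S)\to\Out(A_0)\to 1$ and Lemma~\ref{l:free_subgroup}, is that the kernel meets at most two of the factors, so the image of $D$ is a quotient $(D_1\times D_2)/K\times D_3\times\cdots\times D_{2N-4}$ that merely \emph{normalizes} a direct product of $2N-6$ nonabelian free groups; the inductive hypothesis applies to that subproduct, and normalizer-invariance of its unique fixed rose (Remark~\ref{r:normalizer}) is what lets the full image fix $T_{A_0}$. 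The same subtlety recurs at the edge: the kernel of $f\colon\widetilde D_e\to\aut(A_0)$ is the free group of right transvections of the stable letter, so by Lemma~\ref{l:free_subgroup} the image of $\widetilde D_e$ has one factor quotiented and is again only the \emph{normalizer} of a direct product of $2N-5$ factors. Ellipticity therefore comes from the second clause of Proposition~\ref{p:fixed_point} (the normalizer also fixes the point), not from the first clause that you quote, which does not apply to the group at hand. With these two repairs your argument coincides with the paper's.
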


\begin{remark}\label{r:normalizer}
If such a collapsed rose exists then it is unique by Corollary~\ref{c:max_rose}, and therefore is also invariant under the normalizer $N_{\Out(F_N)}(D)$ of $D$. This will be very useful in the induction step of the proof below.
\end{remark}

\begin{proof}
We first reduce to the case where $D < \ia$. Let $D_i'=D_i \cap \ia$, and let \[ D' = D_1' \times D_2' \times \cdots D_{2N-4}'. \] Then $D'$ is a finite-index, normal subgroup of $D$, so if $D'$ fixes a collapsed rose with $N-2$ petals, so does $D$ (as $D < N(D')$, this follows from Remark~\ref{r:normalizer}). We can therefore assume that $D < \ia$.

When $N=3$, a rose with $N-2$ petals is simply a nonseparating free splitting.  By Theorem~\ref{t:preserved_splitting}, a direct product of two free groups $D=D_1 \times D_2$ in $\iat$ fixes such a splitting. 

For the inductive step, we will take the preimage $\tilde D$ of $D$ in $\aut(F_N)$ and build an action of $\tilde D$ on a tree $T$ such that the restriction to the inner automorphisms $F_N < \aut(F_N)$ is a free splitting with $N-2$ $F_N$-orbits of edges. This splitting is then $D$-invariant (Lemma~\ref{l:lifting_condition}) and is necessarily an $N-2$ rose by Corollary~\ref{c:max_unrefinable}. We build the tree as a \emph{graph of actions} (see \cite{Lev2}). A graph of actions for a group $H$ consists of the following data:

\begin{itemize}
 \item A graph of groups decomposition of $H$.
 \item For every vertex $u$ in the graph of groups, a tree $T_u$ equipped with an action of the vertex group $H_u$.
 \item For every oriented edge $e$ with terminus $u$, a fixed point $x_e$ of $H_e < H_u$ in $T_u$.
\end{itemize}

Given a graph of actions where the vertex trees are all simplicial, if $S$ is the Bass--Serre tree of the underlying graph of groups then one obtains a refinement $T$ of $S$ from the graph of actions. There is a natural collapse map $T \to S$, where the preimage of a vertex $\tilde u \in S$ is a copy of $T_u$. The points $x_e$ are used as gluing instructions for the endpoints of the edges of $S$ into the trees $T_u$. We will take $S$ to be any one-edge nonseparating free splitting invariant under $D$, which exists by Theorem~\ref{t:preserved_splitting}. Let $A \cong F_{N-1}$ be a free factor whose conjugates form the vertex stabilizers of $S$, and let $v$ be the vertex fixed by $A$. Let $D_A$ be the image of $D$ in $\out(A)$. By considering product rank in the short exact sequence \[ 1 \to A \times A \to \stab^0(S) \to \Out(A) \to 1, \] after reordering the factors we must have: \[D_A \cong (D_1 \times D_2)/N \times D_3 \times \cdots \times D_{2N-4},\] where $N$ is a normal subgroup of $D_1 \times D_2$. In particular, $D_A$ is in the normalizer of a direct product of $2N-6$ free groups in $\Out(A)$. By induction and Remark~\ref{r:normalizer}, we know that $D_A$ fixes a collapsed rose $T_A$ with $N-3$ petals. Let $\tilde D_A$ be the preimage of $D_A$ in $\aut(A)$. As $\tilde D_v$ preserves $A$, the vertex group $\tilde D_v$ acts on $T_A$ via the projection $\tilde D_v \to \aut(A)$. In order to show that we can define a graph of actions, we need to take an edge $e$ adjacent to $v$ and check that $\tilde D_e$ has a fixed point with respect to its action on $T_A$. We let $f: \tilde D_e \to \tilde D_A$ be the map factoring through $\tilde D_v$ that determines this action. We have the following commutative diagram.

\[
\xymatrix{
\tilde D_e \ar@/_/[ddr]_\cong \ar@/^1pc/[drr]^f \ar@{^{(}->}[dr]\\
&\tilde D_v \ar[d] \ar[r] & \tilde D_A\ar[d]  &{}\save[]-<0.1cm,0cm>* {< \aut(A)}
 \restore \\
&D \ar[r] & D_A &  {}\save[]-<0.1cm,0cm>* {< \out(A)}
 \restore \\ }
\]

The map from $\tilde D_e$ to $D$ is an isomorphism as $S$ is a free splitting, so that $\tilde D_e$ is an automorphic lift as described in Section~\ref{s:automorphic_lifts}. Without loss of generality, we can take $e$ corresponding to the stable letter $x_1$, so that the stabilizers of its endpoints are $A$ and $x_1A x_1^{-1}$. As in Section~\ref{s:k-rose}, every automorphism $\phi \in \tilde D_e$ restricts to an automorphism $\phi_A$ of $A$ and satisfies $\phi(x_1)=x_1a$ for some $a \in A$. It follows that the kernel of $f$ is a free group generated by these right transvections, so that after rearranging the factors of $\tilde D_e \cong D$, we have \[ \image(f) \cong D_1/ N_1 \times D_2 \times \cdots D_{2N-4},\] for some normal subgroup $N_1$ of $D_1$. Therefore $\image(f)$ is contained in the normalizer of a direct product of $2N-5=2(N-1)-3$ free groups in $\aut(A)$, so by  Proposition~\ref{p:fixed_point}, the group $\image(f)$ has a fixed point $x_e$ in  $T_A$.

It follows that $\tilde D$ admits a graph of actions with defining tree Bass--Serre tree $S$ and vertex tree $T_A$, so that the refinement of $S$ determined by this graph of actions is a free splitting of $F_N$ with $N-2$ orbits of edges.
\end{proof}

\begin{theorem} [Theorem~B]
Let $N \geq 3$ and suppose $D<\aut(F_N)$ is a direct product of $2N-3$ nonabelian free groups. Then,
the image of $D$ in $\out(F_N)$ fixes a unique collapsed rose with $N-2$ petals, and $D$ acts on the Bass--Serre tree of this collapsed rose with a unique global fixed point. 
\end{theorem}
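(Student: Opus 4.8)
The plan is to split the statement into two independent halves: first exhibit the rose by projecting $D$ into $\out(F_N)$ and invoking Theorem~\ref{t:thmA}, and then obtain the global fixed point on its Bass--Serre tree directly from Proposition~\ref{p:fixed_point}.

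Write $\pi\colon\aut(F_N)\to\out(F_N)$ for the canonical projection, set $\bar D=\pi(D)$, and let $K=D\cap\Inn(F_N)=\ker(\pi|_D)$. First I would pin down $K$ by a product-rank count. Since $K<\Inn(F_N)\cong F_N$ is a subgroup of a free group, $\pr(K)\le\pr(F_N)=1$ (a free group contains no product of two nonabelian free groups); conversely, applying Lemma~\ref{l:product_rank_exact_sequences} to $1\to K\to D\to\bar D\to 1$ and using Proposition~\ref{p:product_rank_aut_out} gives $2N-3=\pr(D)\le\pr(K)+\pr(\bar D)\le\pr(K)+(2N-4)$, so $\pr(K)\ge 1$. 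Hence $\pr(K)=1$. Writing $D=D_1\times\cdots\times D_{2N-3}$ and applying Lemma~\ref{l:free_subgroup} to the normal subgroup $K$ of $D$, after reordering the factors I may assume $K\le D_1$. It follows that $D_2\times\cdots\times D_{2N-3}$ meets $K$ trivially, so $\pi$ is injective on it and $P:=\pi(D_2\times\cdots\times D_{2N-3})$ is a direct product of $2N-4$ nonabelian free groups in $\out(F_N)$.

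By Theorem~\ref{t:thmA}, $P$ fixes a unique collapsed rose $T$ with $N-2$ petals in the boundary of Outer space. Because $D_2\times\cdots\times D_{2N-3}$ is normal in $D$, its image $P$ is normal in $\bar D$; hence for every $\phi\in\bar D$ the translate $\phi\cdot T$ is again a collapsed rose with $N-2$ petals and is fixed by $\phi P\phi^{-1}=P$, so $\phi\cdot T=T$ by the uniqueness clause of Theorem~\ref{t:thmA}. Thus $\bar D$ fixes $T$, and $T$ is the only collapsed rose with $N-2$ petals fixed by $\bar D$, since any such rose is fixed by $P<\bar D$ and therefore equals $T$. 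It remains to lift this to an action of $D$: as $\bar D<\Stab(T)$, we have $D<\pi^{-1}(\Stab(T))=\st$, so $D$ acts on the Bass--Serre tree of $T$ through the $\st$-action. Since $D$ is a direct product of $2N-3$ nonabelian free groups contained in $\st$, Proposition~\ref{p:fixed_point} applies verbatim and produces a unique global fixed point for $D$ in $T$, completing the proof.

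The genuinely load-bearing inputs are Theorem~\ref{t:thmA} and Proposition~\ref{p:fixed_point}, which I treat as black boxes; the only real work is the product-rank bookkeeping that isolates a $(2N-4)$-factor sub-product injecting into $\out(F_N)$, together with the normalizer argument that transfers the fixed rose from $P$ to all of $\bar D$. I expect the main subtlety to lie in this transfer step: one must use that the $\out(F_N)$-action preserves the combinatorial type ``collapsed rose with $N-2$ petals'' so that the uniqueness in Theorem~\ref{t:thmA} can be leveraged, and that the normality of $P$ in $\bar D$ is genuinely available --- both of which hold since factors of a direct product are normal and the action is by pre-composition with automorphisms.
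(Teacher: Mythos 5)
Your proof is correct and follows essentially the same route as the paper's: isolate the kernel of $D\to\bar D$ inside a single direct factor via Lemma~\ref{l:free_subgroup}, apply Theorem~\ref{t:thmA} to the image of the remaining $2N-4$ factors and transfer its fixed rose to all of $\bar D$ using normality plus uniqueness (the paper packages exactly this transfer as Remark~\ref{r:normalizer}), and finish with Proposition~\ref{p:fixed_point}. The only cosmetic difference is that you pin down $\pr(K)=1$ exactly via Lemma~\ref{l:product_rank_exact_sequences}, whereas the paper only needs that the kernel is free, hence has product rank at most one and lies in one factor.
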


\begin{proof}
If $D$ is a direct product of $2N-3$ nonabelian free groups in $\aut(F_N)$, then the image $\bar{D}$ of $D$ in $\out(F_N)$ is contained in the normalizer of a direct product of $2N-4$ nonabelian free groups (as the kernel of the map $D \to \bar{D}$ is free and normal, so contained in exactly one factor by Lemma~\ref{l:free_subgroup}). Hence $\bar{D}$ fixes a collapsed rose $T$ with $N-2$ petals in the boundary of Outer space, and that $D$ acts on this tree. Proposition~\ref{p:fixed_point} then states that $D$ has a unique global fixed point with respect to the action on $T$.
\end{proof}

\section{Algebraic descriptions of the direct products and their centralizers}\label{s:applications} 

In this section we will prove Theorems~C and D from the introduction, where the relevant notation was also established. We also prove an $\out(F_N)$-version of Theorem~C in Theorem~\ref{t:direct-products-out}.

\begin{theorem}[Theorem~C]
Let $N \geq 3$ and let $D < \aut(F_N)$ be a direct product of $2N-3$ nonabelian free groups. Then a conjugate of $D$ is contained in one of the following subgroups.
\begin{itemize}
\item $L_1 \times \cdots \times L_{N-2} \times R_1 \times \cdots \times R_{N-2} \times I(A)$
\item $L_1^\tau \times \cdots \times L_{N-2}^\tau \times R_1^\tau \times \cdots \times R_{N-2}^\tau \times I(A)^\tau \times \langle \tau \rangle $
\item $\langle \tau, L_1 \rangle \times L_2^\tau \times \cdots L_{N-2}^\tau \times R_1^\tau \times \cdots R_{N-2}^\tau \times I(A)^\tau$
\item $L_1^\tau \times \cdots \times L_{N-2}^\tau \times R_1^\tau \times \cdots \times R_{N-2}^\tau \times \langle I(A), \tau \rangle $
\end{itemize}
\end{theorem}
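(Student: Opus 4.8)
The plan is to deduce this theorem from Theorem~\ref{t:B} together with the classification of maximal-rank direct products in $M_k(F_2)$ obtained in Theorem~\ref{t:all-the-D}. By Theorem~\ref{t:B}, the image of $D$ in $\out(F_N)$ fixes a collapsed rose with $N-2$ petals and $D$ fixes a vertex of its Bass--Serre tree $T$. Since the collapsed roses with $N-2$ petals form a single $\aut(F_N)$-orbit (all being free splittings with the same quotient graph of groups) and such a tree has a single $F_N$-orbit of vertices, and since we need only control $D$ up to conjugacy, I would first replace $D$ by a conjugate so that $T$ is the standard rose of Figure~\ref{f:rose} --- with vertex group $A=\langle a_1,a_2\rangle$ and petals labelled by the stable letters $x_1,\dots,x_{N-2}$ --- and so that the fixed vertex is the base vertex $b_A$ with stabilizer $A$. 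After this reduction $D<\st_A$.

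The vertex group of a collapsed rose with $N-2$ petals is a free factor of rank $N-(N-2)=2$, so $A\cong F_2$, and Proposition~\ref{p:stab-description-rose} identifies
\[ \st_A \;\cong\; M_{2N-4}(A)\rtimes W_{N-2}, \qquad M_{2N-4}(A)=A^{2N-4}\rtimes\aut(A), \]
with $W_{N-2}$ acting as a subgroup of the group $\sym(2N-3)$ of Lemma~\ref{l:extra-G}. Thus $D$ embeds in $M_{2N-4}(A)\rtimes\sym(2N-3)$ as a direct product of $2N-3=(2N-4)+1$ nonabelian free groups, that is, with maximal product rank. Proposition~\ref{p:augment} then forces the image to lie in $M_{2N-4}(A)\times 1$, and since $A\cong F_2$ this is $M_{2N-4}(F_2)$. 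Theorem~\ref{t:all-the-D} now applies directly, with its ambient parameter equal to $2N-4$.

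It remains to translate the four possibilities of Theorem~\ref{t:all-the-D} into the four displayed subgroups. Under the isomorphism of Proposition~\ref{p:stab-description-rose}, which sends $\phi$ to $(u_1^{-1},\dots,u_{N-2}^{-1},v_1,\dots,v_{N-2};\phi|_A)$, the copy $\LL_i$ carrying $u_i$ is exactly the left-transvection group $L_i$, the copy carrying $v_i$ is the right-transvection group $R_i$, the subgroup $J$ is the inner-automorphism group $I(A)$, and $\t=(1,\dots,1;\tau)$ is the image of the Nielsen automorphism $\tau$; accordingly $\LL_i^\tau$ and $J^\tau$ become $L_i^\tau$, $R_i^\tau$, $I(A)^\tau$. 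With this dictionary, case~3(\textbf{i}) gives the first bullet, case~(2) the second, case~3(\textbf{ii}) the fourth, and case~3(\textbf{iii}) the third. Two normalizations are then needed to land in the standard subgroups. The Nielsen transformation supplied by Theorem~\ref{t:all-the-D} is a priori an arbitrary Nielsen transformation of $A$; since all of these are conjugate in $\aut(A)$ and $\aut(A)<\st_A$ acts by conjugation on $M_{2N-4}(A)$ compatibly with the whole dictionary (fixing each $\LL_i$ and $J$ setwise while conjugating $\t$), a further conjugation of $D$ by an element of $\aut(A)$ aligns it with the fixed $\tau$, so that $\Fix(\tau)\cap A=\langle a_1a_2a_1^{-1},a_2\rangle$. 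In case~3(\textbf{iii}) the distinguished index may fall on any of the $2N-4$ coordinates; conjugating by the element of $W_{N-2}$ that carries the relevant pair $\{u_j,v_j\}$ to $\{u_1,v_1\}$, and applying the sign that exchanges $L_1\leftrightarrow R_1$ if necessary, places the distinguished factor on the $L_1$-coordinate and yields exactly $\langle\tau,L_1\rangle\times L_2^\tau\times\cdots\times R_{N-2}^\tau\times I(A)^\tau$.

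Every move above is a conjugation inside $\aut(F_N)$, so the conclusion is genuinely a statement about a conjugate of $D$. The substantive mathematics is entirely contained in the results being invoked --- Theorem~\ref{t:all-the-D}, whose delicate $\aut(F_2)$ algebra performs the actual classification, and Proposition~\ref{p:augment}, which eliminates the $W_{N-2}$ part. I expect the only real work in assembling the present theorem to be the careful bookkeeping of this dictionary together with the two normalizations just described; this is the main, though essentially routine, obstacle.
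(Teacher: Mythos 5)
Your proposal is correct and follows essentially the same route as the paper: Theorem~B gives a fixed vertex of the Bass--Serre tree, Proposition~\ref{p:stab-description-rose} identifies its stabilizer with $M_{2N-4}(A)\rtimes W_{N-2}$, Proposition~\ref{p:augment} kills the $W_{N-2}$ part, and Theorem~\ref{t:all-the-D} plus the dictionary between $\LL_i, J, \underline{\tau}$ and $L_i, R_i, I(A), \tau$ finishes the argument. The only difference is that you spell out the two final normalizations (aligning the abstract Nielsen transformation with the standard $\tau$ via $\aut(A)$, and using $W_{N-2}$ to move the distinguished factor to the $L_1$-coordinate), which the paper leaves implicit in its ``up to conjugation'' reduction; both steps are carried out correctly.
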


\begin{proof}
By Theorem~\ref{t:B}, there exists an action of $D$ on the Bass--Serre tree $T$ of a collapsed rose with $N-2$ petals such that $D$ has a (unique) global fixed point. Up to conjugation, we may assume that the collapsed rose has vertex group $A=\langle a_1, a_2 \rangle$ and stable letters $x_1, \ldots, x_{N-2}$, and $D < \st_A$ (i.e. the vertex fixed by $D$ in $T$ has $F_N$--stabilizer equal to $A$). By Proposition~\ref{p:stab-description-rose}, we have 
\[ \st_A=M_{2N-4}(A) \rtimes W_{N-2}, \]
where $W_{N-2}$ acts as a subgroup of the group $\sym{(2N-3)}$ of automorphisms of $M_{2N-4}(A)$ defined in Lemma~\ref{l:extra-G}. By Proposition~\ref{p:augment}, the projection of $D$ to $W_{N-2}$ is trivial, so that $D$ is contained in $\sto_A=M_{2N-4}(A)$. Under the isomorphism between $\sto_A$ and $M_{2N-4}(A)$ given in Proposition~\ref{p:stab-description-rose}, the groups $L_i$ and $R_i$ are taken to factors in the $A^{2N-4}$ subgroup of $M_{2N-4}(A)$, the inner automorphisms by elements of $A$ are taken to $J$, and $\tau$ is taken to $\underline{\tau}$ (using the notation of Section~\ref{s:slim_insert}). The possibilities for $D$ are then given by Theorem~\ref{t:all-the-D}.
\end{proof}

In the following, we blur the distinction between $L_i$ and $R_i$  and their (isomorphic) images in $\out(F_N)$. 

\begin{theorem}\label{t:direct-products-out}
Let $N \geq 3$ and let $D < \out(F_N)$ be a direct product of $2N-4$ nonabelian free groups. Then a conjugate of $D$ is contained in one of the following subgroups.
\begin{itemize}
\item $L_1 \times \cdots \times L_{N-2} \times R_1 \times \cdots \times R_{N-2}$
\item $L_1^\tau \times \cdots \times L_{N-2}^\tau \times R_1^\tau \times \cdots \times R_{N-2}^\tau \times \langle [\tau] \rangle $
\item $\langle [\tau], L_1 \rangle \times L_2^\tau \times \cdots L_{N-2}^\tau \times R_1^\tau \times \cdots R_{N-2}^\tau$
\end{itemize}
\end{theorem}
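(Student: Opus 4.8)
The plan is to run the proof of Theorem~\ref{t:direct-products-aut} inside $\Stab^0(T)$ rather than inside $\sto_A$. By Theorem~\ref{t:thmA}, $D$ fixes a unique collapsed rose $T$ with $N-2$ petals; conjugating, I may take its vertex group to be $A=\langle a_1,a_2\rangle$ with stable letters $x_1,\dots,x_{N-2}$, so that $D<\Stab(T)<\out(F_N)$. The split extension $\Stab(T)=\Stab^0(T)\rtimes W_{N-2}$ embeds in $M_{2N-5}(F_2)\rtimes\sym(2N-4)$, where $\Stab^0(T)\cong M_{2N-5}(F_2)$ and $W_{N-2}$ acts as a subgroup of the $\sym(2N-4)$ of Lemma~\ref{l:extra-G} by signed permutation of petals (exactly as recorded for $\sto_A$ in Proposition~\ref{p:stab-description-rose}). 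Since $D$ is a direct product of $2N-4=(2N-5)+1$ nonabelian free groups, Proposition~\ref{p:augment} forces its image in $W_{N-2}$ to be trivial, so $D<\Stab^0(T)\cong M_{2N-5}(F_2)$ with maximal product rank $k+1$ for $k=2N-5$.

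Next I would pin down the dictionary between the two pictures. Realising $\Stab^0(T)$ through the automorphic lift $\sto_{e_1}\cong M_{2N-5}(A)$ of Proposition~\ref{p:stab-description-rose} (where $e_1$ carries $x_1$), the $2N-5$ coordinate factors of $A^{2N-5}$ are carried to $L_2,\dots,L_{N-2},R_1,\dots,R_{N-2}$, and $\t=\underline{\tau}$ is carried to $[\tau]$, just as in the proof of Theorem~\ref{t:direct-products-aut}. The one genuinely new phenomenon --- and the heart of the argument --- is that the diagonal inner factor $J$ does not die in $\out(F_N)$: a direct calculation shows that the element of $J$ with inner part $\ad_g$ equals $\ad_g$ composed with the left transvection $x_1\mapsto g^{-1}x_1$, hence represents the same outer class as that transvection. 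Thus under $\Stab^0(T)\cong M_{2N-5}(F_2)$ the distinguished factor $J$ corresponds to $L_1$. (In the $\aut$-version it was the inner automorphisms $I(A)$ that mapped to $J$; here those are trivial, and their role is taken over by one geometric factor.)

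With this dictionary I would apply Theorem~\ref{t:all-the-D} with $k=2N-5$ and transcribe its conclusions. Case~3(i), where $\ov{\pi}(D)=1$, gives $D<L_1\times\cdots\times L_{N-2}\times R_1\times\cdots\times R_{N-2}$, the first subgroup. Case~(2) gives $C_{M_k}(D)=\langle\t\rangle\leftrightarrow\langle[\tau]\rangle$ and places $D$ in $L_1^\tau\times\cdots\times R_{N-2}^\tau\times\langle[\tau]\rangle$, the second subgroup. Cases~3(ii) and~3(iii) both yield the third subgroup: in 3(ii) the factor with nontrivial image in $\out(F_2)$ is the one whose commutator lies in $J\leftrightarrow L_1$, so the distinguished factor is already $\langle[\tau],L_1\rangle$; in 3(iii) the distinguished factor is $\langle\LL_j,\tau\rangle$ for some visible coordinate, and I would conjugate $D$ by the element of $W_{N-2}$ that transposes petal~$1$ with petal~$j$ (followed, if that coordinate is an $R$-transvection, by inversion of petal~$1$) to move it into the $L_1$-slot. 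Because $\tau$ acts only on $A$ while $W_{N-2}$ fixes $A$ pointwise, $[\tau]$ and the subgroup $\Fix(\tau)\cap A$ are preserved by this conjugation, so the remaining factors stay of the form $L_i^\tau,R_i^\tau$ and the $J^\tau=L_1^\tau$ coordinate harmlessly becomes another twisted geometric factor. This collapses the four cases (2), 3(i), 3(ii), 3(iii) of Theorem~\ref{t:all-the-D} onto the three listed subgroups.

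I expect the main obstacle to be precisely the bookkeeping in this dictionary: verifying the identification $J\leftrightarrow L_1$, and checking that the $W_{N-2}$-conjugation in case~3(iii) simultaneously moves the distinguished factor to $\langle[\tau],L_1\rangle$ and keeps every other factor in its required twisted form. Once this correspondence is nailed down, the statement follows by direct translation from Theorem~\ref{t:all-the-D}, with no further hard analysis required.
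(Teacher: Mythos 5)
Your proposal is correct and follows essentially the same route as the paper: conjugate onto the standard rose via Theorem~A, use Proposition~\ref{p:augment} on $\Stab(T)=\Stab^0(T)\rtimes W_{N-2}$ to force $D<\Stab^0(T)\cong M_{2N-5}(A)$, identify the coordinate factors with $L_2,\dots,L_{N-2},R_1,\dots,R_{N-2}$ and $J$ with $L_1$ (your computation that an element of $J$ is $\ad_g$ composed with a left transvection of $x_1$ is exactly the paper's observation $\Theta(L_1)=J$, read in the opposite direction), then transcribe Theorem~\ref{t:all-the-D}. Your explicit $W_{N-2}$-conjugation merging case~3(iii) into the third listed subgroup is a detail the paper leaves implicit, but it is the intended argument.
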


\begin{proof}
We apply Theorem~A to find a $D$-invariant collapsed rose with $N-2$ petals in the boundary of Outer space. We then conjugate $D$ so that this rose $T$ is the one given by $A$ and $x_1, \ldots, x_{N-2}$. In this case it is more natural to see $\stab^0(T)\cong M_{2N-5}(A)$ decomposing as the exact sequence:
\[ 1 \to L_1 \times \cdots \times L_{N-2} \times R_1 \times \cdots \times R_{N-2} \to \Stab^0(T) \to \Out(A) \to 1, \]
so that $\stab(T) =\stab^0(T) \rtimes W_{N-2}$ and $W_{N-2}$ acts by signed permutations of the $(R_i,L_i)$ pairs of factors in the kernel of this exact sequence. Automorphic lifting (see the proof of Proposition~\ref{p:automorphic-lift}) tells us that $\stab^0(T)\cong \sto_e$ for any edge $e$ in the tree. If $e$ is the edge between the vertices corresponding to the cosets $1A$ and $x_1A$ in the Bass--Serre tree, then in Proposition~\ref{p:stab-description-rose} we have seen that each $\phi \in \sto_e$ preserves $A$, maps $x_1 \mapsto x_1v_1$ and for $2 \leq i \leq N-2$ maps $x_ i \mapsto u_ix_iv_i$ with the $u_i,v_i \in A$. This gives an isomorphism between $\stab^0(T)= \sto_e$ and $M_{2N-5}(A)$ via \[ \Theta(\phi) = (u_2^{-1},\ldots, u_{N-2}^{-1},v_1, \ldots v_{N-2} ; \phi|_A ) \]
Although the images of $L_2, \ldots, L_{N-2}$ and $R_1, \ldots, R_{N-2}$ are easy to see under $\Theta$, the `natural' representatives of the elements of $L_1$ are not in $\sto_e$. However the transvection $\phi$ mapping $x_1 \mapsto ax_1$ is equivalent in $\out(F_N)$ to the automorphism $\phi'$ sending $x_1$ to $x_1a$ and conjugating every other basis element by $a^{-1}$, so that $\Theta(\phi')=(a, \ldots, a; \ad_a^{-1})$. It follows that $L_1$ is mapped to the subgroup $J$ of  $M_{2N-5}(A)$ (using the notation of Section~\ref{s:slim_insert}). With some care one can then check that as in the Aut proof, $W_{N-2}$ acts on $M_{2N-5}(A)$ through $\Theta$ as a subgroup of the group $\sym(2N-4)$ defined in Section~\ref{s:slim_insert}. In particular, Proposition~\ref{p:augment} then tells us that the projection of $D$ to $W_{N-2}$ is trivial and $D < \stab^0(T)$. The result then follows by combining Theorem~\ref{t:all-the-D} with the above isomorphism and the observation that $\Theta(L_1)=J$. \end{proof}

\begin{proposition} \label{p:centralizer}
Let $N \geq 3$ and let $D < \aut(F_N)$ be a direct product of $2N-3$ nonabelian free groups in $\Aut(F_N)$. Then the centralizer of $D$ is either trivial or generated by a Nielsen automorphism.
\end{proposition}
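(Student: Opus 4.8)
The plan is to use Theorem~\ref{t:B} to place $D$ inside the stabilizer of a collapsed rose, and then to identify the centralizer of $D$ with a centralizer inside $M_k(F_2)$ that is already described by Theorem~\ref{t:all-the-D}. By Theorem~\ref{t:B}, after conjugating $D$ (which changes its centralizer only by conjugation, and so preserves the property of being trivial or generated by a Nielsen automorphism), we may assume that the image $\bar D < \out(F_N)$ fixes the collapsed rose $T$ with $N-2$ petals given by the free factor $A = \langle a_1, a_2 \rangle \cong F_2$ and stable letters $x_1, \dots, x_{N-2}$, and that $D$ fixes the vertex $b_A$ of the Bass--Serre tree with stabilizer $A$; thus $D < \st_A$. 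Writing $k = 2N-4$, Proposition~\ref{p:stab-description-rose} gives $\st_A \cong M_k(A) \rtimes W_{N-2}$ with $W_{N-2}$ a subgroup of the group $\sym(k+1)$ of Lemma~\ref{l:extra-G}, and $\sto_A \cong M_k(A) = M_{2N-4}(F_2)$. The goal is then to show $C_{\aut(F_N)}(D) = C_{M_k}(D)$ and to appeal to Theorem~\ref{t:all-the-D}.

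The heart of the argument is to confine $C_{\aut(F_N)}(D)$ to $\sto_A$. Any $\psi \in C_{\aut(F_N)}(D)$ normalizes $D$, so its image $\bar\psi$ normalizes $\bar D$; since $\bar D$ and $\bar\psi \bar D \bar\psi^{-1} = \bar D$ fix $T$ and $\bar\psi\cdot T$ respectively, the uniqueness clause of Theorem~\ref{t:B} forces $\bar\psi \cdot T = T$. Hence $C_{\aut(F_N)}(D) < \st$, and therefore $C_{\aut(F_N)}(D) < N_{\st}(D)$. Proposition~\ref{p:fixed_point} shows that $N_{\st}(D)$ fixes the unique global fixed point $b_A$ of $D$, so $C_{\aut(F_N)}(D) < \st_A = M_k(A) \rtimes W_{N-2}$. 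Finally, the second assertion of Proposition~\ref{p:augment} states that anything in $M_k \rtimes \sym(k+1)$ centralizing $D$ already lies in $M_k \times 1$; applying this (with $W_{N-2} < \sym(k+1)$) gives $C_{\aut(F_N)}(D) < \sto_A \cong M_k$, whence $C_{\aut(F_N)}(D) = C_{M_k}(D)$.

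With this identification in hand, Theorem~\ref{t:all-the-D} completes the proof: part~(2) asserts that if $C_{M_k}(D)$ is nontrivial then $C_{M_k}(D) = \langle \t \rangle$ for a Nielsen transformation $\tau$ of $A$, while part~(3) covers the case where it is trivial. It only remains to observe that $\t = (1,\dots,1;\tau)$ corresponds, under the isomorphism of Proposition~\ref{p:stab-description-rose}, to the automorphism of $F_N$ that restricts to $\tau$ on $A$ and fixes every stable letter $x_i$; in a suitable basis this is exactly a Nielsen automorphism of $F_N$. I expect the main obstacle to be the containment $C_{\aut(F_N)}(D) < \sto_A$: the two uniqueness inputs (the unique rose of Theorem~\ref{t:B} and the unique fixed vertex of Proposition~\ref{p:fixed_point}) must be combined carefully with Proposition~\ref{p:augment} to rule out centralizing elements that permute petals or move the vertex $b_A$, since a priori $C_{\aut(F_N)}(D)$ could be far larger than $C_{M_k}(D)$.
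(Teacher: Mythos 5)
Your proof is correct and follows essentially the same route as the paper's: unique invariant rose (via Theorem~B / Theorem~A), uniqueness forcing the centralizer to preserve the rose, Proposition~\ref{p:fixed_point} to pin the centralizer to the fixed vertex, Proposition~\ref{p:augment} to kill the $W_{N-2}$ component, and Part~2 of Theorem~\ref{t:all-the-D} to conclude. The only cosmetic difference is that you invoke Theorem~B directly where the paper re-runs the normalizer argument from Theorem~B's proof and cites Theorem~A.
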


\begin{proof}
Let $C=C(D)$ be the centralizer of $D$, and let $\overline{C}, \overline{D}$ be the respective images of $C$ and $D$ in $\Out(F_N)$. As in the proof of Theorem~\ref{t:B}, the group $\overline{D}$ is contained in the normalizer of a direct product of $2N-4$ nonabelian free groups, so by Theorem~A it fixes a unique collapsed rose with $N-2$ petals in the boundary of Outer space. By uniqueness $\overline{C}$ also fixes this rose, so $C$ acts on the associated Bass--Serre tree $T$. The group $D$ acts on $T$ with a unique fixed point (Proposition~\ref{p:fixed_point}), therefore $C$ also fixes this point. Hence $C$ is in the subgroup $H=M_{2N-4}(A) \rtimes W_{N-2}$ described in Proposition~\ref{p:stab-description-rose}. The second part of Proposition~\ref{p:augment} tells us that the projections of $C$ and $D$ to the $W_{N-2} < \sym(2N-3)$ factor are trivial, so $C < M_{2N-4}(A)$. We can therefore apply Part~2 of Theorem~\ref{t:all-the-D}:  if the centralizer of $D$ in $M_{2N-4}(A)$ is nontrivial it is generated by the element $\t$, and this is mapped to a Nielsen transformation $\tau$ under the isomorphism between the group $M_{2N-4}(A)$ and the point stabilizer in $T$ given in Proposition~\ref{p:stab-description-rose}.\end{proof}

\begin{corollary}[Theorem~\ref{t:nielsen}]
Let $N \geq 3$ and suppose $\G$ is a finite-index subgroup of $\Aut(F_N)$.  If $f \colon \G \to \Aut(F_N)$ is an injective map then every power of a Nielsen automorphism is mapped to a power of a Nielsen automorphism under $f$.
\end{corollary}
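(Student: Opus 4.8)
The plan is to give an intrinsic description of the powers of Nielsen automorphisms and then note that this description is preserved by any injective homomorphism. Specifically, I would show that a nontrivial element $\phi\in\Aut(F_N)$ is a power of a Nielsen automorphism if and only if $\phi$ centralises some direct product of $2N-3$ nonabelian free groups. For the forward direction, take the standard Nielsen transformation $\tau$ and the group
\[
D_0 = L_1^\tau \times \cdots \times L_{N-2}^\tau \times R_1^\tau \times \cdots \times R_{N-2}^\tau \times I(A)^\tau
\]
appearing in Theorem~\ref{t:direct-products-aut}: it is a direct product of $2N-3$ nonabelian free groups (each factor is free of rank $2$, since $\Fix(\tau)\cap A = \langle a_1a_2a_1^{-1}, a_2\rangle$), and by construction every factor commutes with $\tau$, so $\tau$, and hence each power $\tau^m$, centralises $D_0$. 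Because Nielsen automorphisms of $F_N$ form a single conjugacy class, every power of any Nielsen automorphism centralises a conjugate of $D_0$. The reverse direction is precisely Proposition~\ref{p:centralizer}: the centraliser of a direct product of $2N-3$ nonabelian free groups is trivial or generated by a Nielsen automorphism, so any nontrivial element centralising such a product is a power of a Nielsen automorphism.

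With this in hand, the deduction would run as follows. Fix a Nielsen automorphism together with a power of it lying in $\G$; after replacing $\G$ by a conjugate and $f$ by the corresponding conjugated map (which changes neither the hypotheses nor the property of being a power of a Nielsen automorphism) I may assume this element is $\tau^m$ with $m\neq 0$. Intersecting each factor of $D_0$ with the finite-index subgroup $\G$ produces finite-index subgroups $D_{0,i}\cap\G$ of the $F_2$ factors, each again nonabelian free, and their internal product $D_1 := \prod_i (D_{0,i}\cap\G)$ is a direct product of $2N-3$ nonabelian free groups contained in $\G$. Since $D_1<D_0$, the element $\tau^m$ centralises $D_1$, and both $\tau^m$ and $D_1$ lie in $\G$; applying the injective map $f$ shows that $f(\tau^m)$ commutes with $f(D_1)$, where $f(D_1)\cong D_1$ is again a direct product of $2N-3$ nonabelian free groups in $\Aut(F_N)$. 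Proposition~\ref{p:centralizer} then tells us that the centraliser of $f(D_1)$ is trivial or cyclic generated by a Nielsen automorphism $\sigma$; as $f$ is injective and $\tau^m\neq 1$, the image $f(\tau^m)$ is a nontrivial element of this centraliser, which is therefore $\langle\sigma\rangle$, so $f(\tau^m)=\sigma^k$ is a power of a Nielsen automorphism.

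Essentially all of the difficulty is already absorbed into Proposition~\ref{p:centralizer} (and, through it, Theorems~A--C); once that is available, this corollary is a short formal argument. The one place I would be careful is in checking that passing from $D_0$ to $D_1 = D_0\cap\G$ and then to $f(D_1)$ preserves the structure of a direct product of exactly $2N-3$ nonabelian free groups, so that Proposition~\ref{p:centralizer} genuinely applies to the image. I would also record explicitly the convention that only those powers $\tau^m$ actually lying in $\G$ are in the domain of $f$, and that the reduction to the standard $\tau$ via conjugation is what licenses the use of the fixed-basis group $D_0$ from Theorem~\ref{t:direct-products-aut}.
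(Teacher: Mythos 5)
Your proposal is correct and follows essentially the same route as the paper: the power of a Nielsen automorphism centralises the product of the $2N-3$ nonabelian free factors from the second case of Theorem~\ref{t:direct-products-aut}, one intersects the factors with $\G$ to get a product of the same shape inside $\G$, applies $f$, and concludes via Proposition~\ref{p:centralizer}. Your write-up is in fact slightly more careful than the paper's (which asserts the centralised product has $2N-4$ factors, an apparent slip, since Proposition~\ref{p:centralizer} needs $2N-3$), but the underlying argument is identical.
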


\begin{proof}
If $\phi$ is the power of a Nielsen automorphism then $\phi$ centralizes a direct product $D < \aut(F_N)$ of $2N-4$ nonabelian free groups (see the second case of Theorem~C). By taking finite-index subgroups of the factors, we can assume that $D < \Gamma$, so that $f(\phi)$ centralizes $f(D)$. Hence $f(\phi)$ is also a power of a Nielsen automorphism by Proposition~\ref{p:centralizer}.
\end{proof}

\section{Ascending chains of direct products}\label{s:maximal}

As well as taking direct products of free groups in $\aut(F_N)$ or $\out(F_N)$ with a maximal number of direct factors, one might also ask about maximality with respect to inclusion. For arbitrary groups, one must be very careful: if $G$ is a group with product rank $k$ and $\mathcal{D}$ is the poset of direct products of $k$ nonabelian free groups in $G$ (ordered by containment), it is not generally true that $\mathcal{D}$ contains maximal elements. One reason for this is the existence of \emph{locally free groups} that are not free.

Recall that a group $G$ is called \emph{locally free} if and only if every finitely generated subgroup of $G$ is free. If $G$ is countable, this is equivalent to the condition that $G$ is the direct limit of its free subgroups. The simplest example of a locally free group that is not free is $\mathbb{Q}$. The free product $\mathbb{Q} \ast \mathbb{Q}$ is also locally free and clearly contains nonabelian free groups. However, divisibility is not the only reason why a locally free group can fail to be free. We are grateful to Henry Wilton for directing us to the following example of Kurosh: take the one-relator group $G = \langle a,b,t \,|\, t[a,b]t^{-1}=a \rangle$, and let $H$ be the kernel of the map to $\mathbb{Z}$ given by $a,b \mapsto 0$, $t \mapsto 1$. The group $G$ is the fundamental group of the space $X_G$ obtained from a one-holed torus $T$  by gluing the boundary component of $T$ to a simple closed curve on $T$. The group $H$ is then the fundamental group of an infinite chain of surfaces (shown in Figure~\ref{f:Kurosh}).

\begin{figure}[ht]  \centering \def\svgwidth{400pt} 
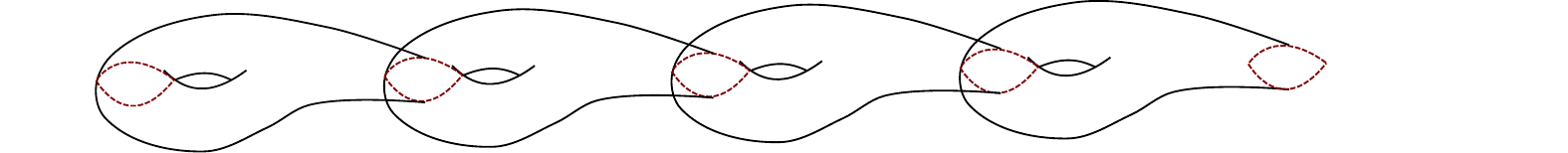 \caption{Kurosh's example of a locally free group that is not free is the fundamental group of the above infinite chain of tori.} \label{f:Kurosh} \end{figure}

The group $H$ has an infinite presentation with generating set $\{a_i,b_i: i \in \mathbb{Z}\}$ and relations  $[a_i,b_i]=a_{i+1}$ for all $i \in \mathbb{Z}$. The subgroup generated by $a_{-n}, b_{-n}, \ldots, a_n, b_n$ is freely generated by $a_{-n}$ and $b_{-n}, b_{-n+1}, \ldots, b_n$, which implies that $H$ is locally free. However, $H$ is not free as $H$ is not residually nilpotent: the relations let us write each $a_i$ as an arbitrarily long iterated commutator, so $a_i$ is trivial in every nilpotent quotient of $H$.

The group $G$ containing $H$ is well-behaved---it is hyperbolic and the fundamental group of a 3-manifold with boundary (this follows by constructing $X_G$ in $\mathbb{R}^3$ and thickening). Following the classification results in Section~\ref{s:applications}, we would like to rule out this behaviour in the mapping torus $M_\tau = F_2 \rtimes_\tau \mathbb{Z}$, where we will take $F_2=\langle a,b \rangle$ and $\tau$ the automorphism taking $a \mapsto ab$ and fixing $b$. In the spirit of the rest of the paper, we embed $M_\tau$ in $\aut(F_2)$ by identifying $F_2 \rtimes 1$ with the inner automorphisms

\begin{proposition}
A subgroup of $M_\tau$ is free if and only if it does not contain a subgroup isomorphic to $\mathbb{Z}^2$. Any free subgroup of $M_\tau$ is contained in a maximal one.
\end{proposition}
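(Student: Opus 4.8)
The plan is to prove the two statements in turn, deducing the second from the first. The forward implication of the first is immediate: a free group contains no copy of $\mathbb{Z}^2$, so any subgroup that contains $\mathbb{Z}^2$ is not free. The substance is the converse, that a $\mathbb{Z}^2$-free subgroup $H\le M_\tau$ is free. I would analyse $H$ through the fibration $p\colon M_\tau\to\mathbb{Z}$ with kernel $F_2=\langle a,b\rangle$ (sending $t\mapsto 1$), and set $K=H\cap F_2$; this $K$ is free, being a subgroup of $F_2$. If $p(H)=0$ then $H=K$ is free and we are done, so assume $p(H)=m\mathbb{Z}$ with $m\ge 1$ and choose $s\in H$ with $p(s)=m$, so that $s=wt^m$ for some $w\in F_2$ and $H=K\rtimes\langle s\rangle$, with $s$ acting on $K$ by the restriction of $\psi:=\ad_w\circ\tau^m$.

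The engine is the elementary fact that $K\rtimes_\psi\langle s\rangle$ contains a copy of $\mathbb{Z}^2$ precisely when $\psi|_K$ has a nontrivial periodic conjugacy class in $K$: if $1\ne c\in K$, $k\in K$ and $r\ge 1$ satisfy $\psi^r(c)=kck^{-1}$, then $k^{-1}s^r$ centralises $c$ and spans a $\mathbb{Z}^2$ with $\langle c\rangle$ (the two are independent, since $c$ lies in $\ker p$ while $k^{-1}s^r$ maps to $rm\ne 0$); the converse is equally routine. Granting that $H$ has no $\mathbb{Z}^2$, we conclude that $\psi|_K$ has no nontrivial periodic conjugacy class, and I would argue that this forces $K$ to be cyclic. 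Indeed, the outer class of $\psi$ equals that of $\tau^m$, so $\psi$ acts on conjugacy classes of $F_2$ exactly as the Dehn twist $\tau^m$ does; in particular it fixes the class of the twisting element $b$, and, being a power of a Nielsen transformation up to an inner automorphism, $\psi$ is polynomially growing. A nonabelian $\psi$-invariant subgroup would inherit polynomial growth and hence a nontrivial periodic conjugacy class, contradicting the previous sentence; thus $K$ is trivial or infinite cyclic. The cyclic case is excluded as well: if $K=\langle c\rangle$ then $\psi^2(c)=c$, so $\langle c,s^2\rangle\cong\mathbb{Z}^2$. Hence $K=1$, $H\cong\mathbb{Z}$ is free, and the converse is proved.

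The main obstacle is exactly the step ruling out nonabelian $K$: converting ``$\psi$ is a twist'' into ``every nonabelian $\psi$-invariant subgroup carries a periodic conjugacy class'', above all when $K$ is infinitely generated, since this is where genuinely pathological, Kurosh-type subgroups would have to appear were the statement false. I would handle this by first reducing to a finitely generated $\psi$-invariant nonabelian subgroup, on which the ambient word metric is undistorted in $F_2$, so that polynomial growth and the resulting periodic conjugacy class may be quoted from the train-track theory of \cite{BH}, in concert with the Collins--Turner description $\Fix(\tau)=\langle aba^{-1},b\rangle$ that underpins all of the $\aut(F_2)$ computations in Section~\ref{s:slim_insert}. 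An alternative route, which I would keep in reserve, uses the graph-of-groups decomposition of $M_\tau$ as $(\Fix(\tau)\times\langle t\rangle)\ast_{\mathbb{Z}^2}$, whose vertex group $F_2\times\mathbb{Z}$ satisfies the desired dichotomy by an elementary centre argument, together with a bound on the cohomological dimension of $H$ and Stallings--Swan; but this merely transfers the difficulty to ruling out essential amalgams over the cyclic edge groups, which appears no easier.

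The maximality statement then follows formally from the first part. Given a free subgroup $F\le M_\tau$, consider the poset of free subgroups of $M_\tau$ containing $F$, ordered by inclusion. By the first part a subgroup is free if and only if it contains no $\mathbb{Z}^2$, and this property is preserved under directed unions: a copy of $\mathbb{Z}^2$ is generated by two elements, so if it lay in the union of a chain it would already lie in a single member. Hence the union of any chain of free subgroups containing $F$ is again free, and Zorn's Lemma yields a maximal free subgroup containing $F$.
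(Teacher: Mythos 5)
Your ``engine'' lemma is correct, and your Zorn's Lemma argument at the end coincides with the paper's; but the central step of your converse is a genuine gap, and in fact the conclusion you drive towards is false. You claim that the absence of a $K$-relative periodic conjugacy class forces $K=H\cap F_2$ to be cyclic, hence $H\cong\mathbb{Z}$; this would mean that every $\mathbb{Z}^2$-free subgroup of $M_\tau$ with nonzero image in $\mathbb{Z}$ is cyclic. That is not so. Since $\Fix(\tau)=\langle aba^{-1},b\rangle$ commutes with $t$, the subgroup $\langle \Fix(\tau),t\rangle=\Fix(\tau)\times\langle t\rangle\cong F_2\times\mathbb{Z}$ sits inside $M_\tau$. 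Take $H=\langle aba^{-1}t,\; bt\rangle$. Projection to the $\Fix(\tau)$-coordinate is injective on $H$ (a nontrivial reduced word in the two generators has nontrivial first coordinate), so $H$ is free of rank two and contains no $\mathbb{Z}^2$; yet $p(H)=\mathbb{Z}$, and $K=H\cap F_2$ is the kernel of the induced surjection $H\to\mathbb{Z}$, an infinitely generated nonabelian free group. Your argument, applied to this $H$, would yield $H\cong\mathbb{Z}$, which is absurd. (Note also that the statement you are proving must allow nonabelian free $H$ not contained in the fiber; a proof that ends with ``$H\cong\mathbb{Z}$'' cannot be right.)

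The error is a conflation of absolute and relative periodic conjugacy classes. Polynomial growth of $\psi=\ad_w\circ\tau^m$ does give periodic conjugacy classes in $F_2$, i.e.\ $\psi^r(c)=xcx^{-1}$ with $x\in F_2$; but your engine lemma needs the conjugator $x$ to lie in $K$, since otherwise $x^{-1}s^r$ does not lie in $H$ and no $\mathbb{Z}^2$ inside $H$ is produced. These conditions genuinely differ: in the example above, with $s=bt$ and $\psi=\ad_b\circ\tau$, the element $[a,b]\in K$ lies in $\Fix(\tau)$, so its $F_2$-conjugacy class is $\psi$-invariant with conjugator $b$ --- but $b\notin K$, and no conjugator in $K$ can exist, precisely because $H$ is free. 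So ``$K$ nonabelian and $\psi$-invariant'' does not imply a $K$-relative periodic class, and the dichotomy you want (nonabelian $K$ versus $\mathbb{Z}^2$ in $H$) is unavailable; this is also why your proposed reduction to finitely generated $\psi$-invariant subgroups cannot be made to work. The paper avoids this trap entirely: it lets $M_\tau$ act on the limiting tree of $\tau$ (a cyclic splitting with vertex groups conjugate to $\Fix(\tau)\times\langle\tau\rangle$ and edge groups $\mathbb{Z}^2$), shows every cylinder is a point, an edge, or a star of radius one, blows up the vertex actions of $H$ along the free splitting $\langle aba^{-1}\rangle\ast\langle b\rangle$, and collapses cylinders to exhibit $H$ acting on a tree with trivial edge stabilizers and cyclic or trivial vertex stabilizers --- concluding that $H$ is free, but of arbitrary rank, not cyclic.
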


\begin{proof}
Note that the second statement follows from the first via Zorn's Lemma. If we have an ascending chain $H_1 < H_2 < H_3 < \cdots $ of free subgroups of $M_\tau$, then the union $H = \cup H_i$ does not contain a subgroup isomorphic to $\mathbb{Z}^2$ (as it would be contained in one of the $H_i$). Therefore $H$ is free, and we can apply Zorn's Lemma.

We are left with the trickier task of proving the first assertion. To do this, we look at the limiting tree $T$ of the automorphism $\tau$ in the boundary of Outer space (see \cite{CL}). This is a cyclic splitting with vertex stabilizers conjugate to $\stab(\tau)=\langle aba^{-1},b \rangle$ and edge stabilizers conjugate to $\langle b \rangle$. As $T$ is invariant under $[\tau]$, there is an action of $M_\tau$ on $T$ with vertex stabilizers conjugate to $\langle aba^{-1}, b \rangle\times\langle \tau\rangle$ and edge stabilizers conjugate to $\mathbb{Z}^2=\langle b, \tau\rangle$. At a vertex $v$, there are two $\stab(v)$--orbits of adjacent edges. If $\stab(v)=\langle aba^{-1} ,b \rangle$, these are the conjugacy classes of $\langle aba^{-1}, \tau \rangle$ and $\langle b, \tau \rangle$ in $\stab(v)$.  

Recall that a \emph{cylinder} in $T$ is a subtree $C_g$ that is fixed pointwise by a nontrivial element $g$ of $M_\tau$. In $F_2$ edge stabilizers are malnormal, so if a cylinder $C_g$ contains more than one edge then $g \not \in F_2$. Hence $g=\ad_x \tau^k$ for some $x \in F_2$ and $k \neq 0$. The subgroup of $F_2$ fixed by $g$ is nonabelian, as $g$ commutes with any inner automorphism fixing an edge in its cylinder. Hence $g$ is similar to $\tau^k$ by Collins--Turner (Proposition~\ref{p:CT}). Hence the cylinder is a star of radius one (if $\tau$ fixes an edge then the edge is adjacent to the vertex with stabilizer $\stab(\tau)$). This shows that every cylinder in $T$ is either a point, a single edge, or a star of radius one. 

Let $H$ be a subgroup of $M_\tau$ that does not contain $\mathbb{Z}^2$. Then for every vertex $v \in T$, the stabilizer $H_v$ is free. As above we may assume that $\stab(v) = \langle aba^{-1}, b \rangle \times \langle \tau \rangle$. The `exceptional' case is where $H_v \cap \langle \tau\rangle$ is nontrivial. Then $H_v < \langle \tau\rangle$  as $H$ contains no $\mathbb{Z}^2$ subgroups. Then the $H$--stabilizer of every edge adjacent to $v$ is also equal to $H_v$. The `generic' case is where $H_v \cap \langle \tau \rangle$ is trivial. Then $H_v$ embeds into $\langle aba^{-1},b \rangle$ via the projection to this factor, and under this projection each adjacent edge stabilizer is contained in a $\stab(v)$--conjugate of $\langle aba^{-1} \rangle$ or $\langle b \rangle$. Hence $H_v$ splits relative to its adjacent edge groups via the free splitting $S_v:=\langle aba^{-1}\rangle \ast\langle b\rangle$.

One can therefore blow up each generic vertex group via the splitting $S_v$. In the new $H$--tree $T'$, the nontrivial edge stabilizers are equal to their adjacent vertex stabilizers, so the tree is a union of cylinders with identical cyclic edge and vertex groups, necessarily separated by edges with trivial stabilizers. As all new edges in the blow-up have trivial stabilizers, each cylinder in $T'$ is unchanged from $T$ and is either a point, a single edge, or a star of radius one. It follows that any setwise stabilizer of a cylinder fixes a point in that cylinder, and therefore fixes the cylinder pointwise (as the edge and vertex groups in cylinders are all identical). This means we can collapse each cylinder to a point to give a tree $T''$ on which $H$ acts with trivial edge stabilizers and vertex stabilizers that are either $\mathbb{Z}$ or trivial. Hence $H$ is free.
  \end{proof}

\begin{corollary}
Let $N \geq3$ and let $\mathcal{D}$ be the family of subgroups of either $\aut(F_N)$ or $\out(F_N)$ that are direct products of $2N-3$ or $2N-4$ nonabelian free groups, respectively. Then every $D \in \mathcal{D}$ is contained in a maximal element (with respect to inclusion).
\end{corollary}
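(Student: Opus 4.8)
The plan is to establish the Corollary by Zorn's Lemma, using the classification in Theorems~\ref{t:direct-products-aut} and~\ref{t:direct-products-out} to pin down the ambient structure and the Proposition just proved to control the one genuinely awkward type of factor. Fix $D\in\mathcal{D}$ and consider the poset $P=\{D'\in\mathcal{D}:D\le D'\}$ ordered by inclusion; it is nonempty since $D\in P$, and a maximal element of $P$ is automatically maximal in $\mathcal{D}$. By Zorn's Lemma it therefore suffices to bound an arbitrary chain $\mathcal{C}\subseteq P$ from above inside $P$, and the obvious candidate is the union $U=\bigcup_{D'\in\mathcal{C}}D'$. So the entire problem reduces to proving that $U$ is again a direct product of the correct number of nonabelian free groups.

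First I would confine $U$ to a single copy of $M_k(F_2)$. Every $D'\in\mathcal{C}$ contains $D$; by Theorem~\ref{t:thmA} (resp.\ Theorem~\ref{t:B}) the subgroup $D$ (resp.\ its image in $\out(F_N)$) fixes a \emph{unique} collapsed rose $T$ with $N-2$ petals, and in the $\aut$ case $D$ fixes a unique global point $v$ in the Bass--Serre tree of $T$ (Proposition~\ref{p:fixed_point}). Since each $D'\supseteq D$ fixes its own unique such rose (and point), uniqueness forces these to equal $T$ (and $v$). Hence $U$ fixes $T$ (and $v$), so it lies in $\Stab(T)$ in the $\out$ case and in $\st_A$ in the $\aut$ case (Proposition~\ref{p:stab-description-rose}). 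Proposition~\ref{p:augment} now forces the projection to the finite group $W_{N-2}$ to be trivial, placing $U$ inside $\Stab^0(T)\cong M_{2N-5}(F_2)$ (resp.\ $\sto_A\cong M_{2N-4}(F_2)$), a single copy of $M_k(F_2)$.

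Next I would build the product decomposition of $U$. Applying Theorem~\ref{t:all-the-D} to each $D'\in\mathcal{C}$, and using that they all contain the fixed subgroup $D$ (so that the index $i_0$ of Lemma~\ref{l:one-only}, the associated Nielsen transformation, and the assignment of the commutator subgroups $[D'_i,D'_i]$ to the coordinate factors $\LL_1,\dots,\LL_k,J$ are determined by $D$ and hence constant along the chain), I would align the factorisations so that $U=U_1\times\cdots\times U_{k+1}$, where each $U_i$ is an increasing union of nonabelian free subgroups lying in a single ambient factor. Every ambient factor is one of three kinds: a free group (some $\LL_i$, $J$, or a $\tau$-fixed version), a copy of $M_\tau=F_2\rtimes_\tau\Z$ (the factors $\langle J,\underline{\tau}\rangle$ and $\langle\LL_j,\underline{\tau}\rangle$ of cases 3(ii)--(iii) of Theorem~\ref{t:all-the-D}), or a factor $F_2\times\Z$ arising from the central $\langle\underline{\tau}\rangle$ in case~(2). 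In each kind an ascending union of nonabelian free subgroups is again nonabelian free: for a free factor this is Nielsen--Schreier; for $M_\tau$ it is precisely the Proposition, since being free is equivalent to containing no $\Z^2$ and this property passes to unions; and $F_2\times\Z$ is an easy analogue. As the $U_i$ pairwise commute and are centreless, they generate their direct product, so $U=U_1\times\cdots\times U_{k+1}\in\mathcal{D}$, which bounds $\mathcal{C}$ and completes the Zorn argument.

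The main obstacle is the occurrence of factors isomorphic to $M_\tau$. For a free ambient factor the stability of freeness under ascending unions is immediate from Nielsen--Schreier, but $M_\tau$ is not free---it contains $\Z^2$---so there is no formal reason an ascending chain of its free subgroups should have free union. Removing this obstacle is exactly the Proposition, whose proof passes to the limit tree of $\tau$, exploits malnormality of edge groups together with the Collins--Turner description (Proposition~\ref{p:CT}) to show every cylinder is a point, an edge, or a star, and then blows up, collapses cylinders, and invokes Zorn. A secondary point I would verify carefully is the alignment of the coordinate decomposition along $\mathcal{C}$ described above; this rests on the rigidity supplied by the common subgroup $D$ together with the centraliser constraints of Theorem~\ref{t:all-the-D} and the uniqueness statements of Theorems~\ref{t:thmA} and~\ref{t:B}.
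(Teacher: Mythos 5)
Your strategy is viable and rests on the same two pillars as the paper (the classification machinery of Sections~3--7 and the proposition that a subgroup of $M_\tau$ is free if and only if it contains no $\Z^2$), but it is organised quite differently. The paper never looks at chains in $\mathcal{D}$: it applies the classification once to the given $D$, notes that each listed ambient group is a product of pieces that are free groups, $\Z$, or copies of $M_\tau$, and builds a maximal element by enlarging each factor inside its piece; Zorn's Lemma is invoked only \emph{inside} $M_\tau$, where the $\Z^2$-criterion makes ascending unions of free subgroups free for trivial reasons. You instead run Zorn globally on $\{D'\in\mathcal{D}: D\le D'\}$, which obliges you to prove that $\mathcal{D}$ is closed under unions of chains. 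Your confinement step is correct exactly as written (uniqueness in Theorems~\ref{t:thmA} and~\ref{t:B}, Proposition~\ref{p:fixed_point}, then Proposition~\ref{p:augment} applied to each member of the chain). What your route buys is an explicit proof that the poset above $D$ is inductive; what the paper's route buys is brevity, since it only ever compares two nested groups at a time and so never faces your alignment problem.

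That alignment step is the one place where your justification, as stated, does not hold up. The index $i_0$ and the assignment of the kernels $K_i=D_i\cap F^k$ to the coordinates $\LL_1,\dots,\LL_k,J$ are indeed constant along the chain, but the Nielsen transformation is \emph{not} ``determined by $D$'' in the naive sense: if $\ov\pi(D)=1$ then $D$ carries no Nielsen data at all, and Theorem~\ref{t:all-the-D} normalises each $D'$ only after conjugating by an element of $F^k\times J$ that depends on $D'$; along an infinite chain with no greatest element there is no single member whose normal form controls all the others, so ``constant along the chain'' is not available for free. The statement you need is nevertheless true, and the correct mechanism is fixed-subgroup rigidity anchored at $D$: for $i\neq i_0$ the kernels $K_i$ are nonabelian, any element $g=(\omega_1,\dots,\omega_k;\phi)$ of any $D'_j$ with $\ov\pi(g)\neq 1$ must have $\ad_{\omega_i}\circ\phi$ fixing $K_i$ for every coordinate $i\neq j$, and then Proposition~\ref{p:CT}, Lemma~\ref{l:similar} and Corollary~\ref{c:only-tau}(3) pin down the Nielsen transformation and force the off-diagonal coordinates $\omega_i$ to be coboundaries, \emph{uniformly over the whole chain}. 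Equivalently, skip alignment altogether and run the analysis on the union factors $U_i$ directly: every element of $\pi(U_i)$, $i\neq i_0$, is a power of a Nielsen transformation (Lemma~\ref{l:one-only}(2) applied to a member of the chain containing it), so $\pi(U_i)$ is cyclic by Lemma~\ref{l:allN}; a single conjugation, depending only on $i$, then places $U_i$ inside $\LL_i$ or inside one copy of $\langle \LL_i,\underline{\tau}\rangle\cong M_\tau$ (and $U_{i_0}$ inside a copy of $M_\tau$ containing $J$, via $\ov\pi$ and Lemma~\ref{l:one-only}(3),(4)); since $U_i$ is an ascending union of free groups it contains no $\Z^2$, hence is free by the $M_\tau$ proposition. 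With that replacement your Zorn argument closes correctly.
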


\begin{proof}
By either Theorem~\ref{t:direct-products-aut} or Theorem~\ref{t:direct-products-out}, this is clear unless one factor of $D$ is contained in a subgroup of $\aut(F_N)$ or $\out(F_N)$ isomorphic to $M_\tau$. However, from the above work we know that every free subgroup of $M_\tau$ is contained in a maximal one, from which the result follows.
\end{proof}

\bibliography{direct-products-bib}

\begin{flushleft} 
Mathematical Institute\\
University of Oxford\\
Oxford OX2 6GG\\
\emph{e-mails: }\texttt{bridson@maths.ox.ac.uk, wade@maths.ox.ac.uk} 
\end{flushleft}

\end{document}